\providecommand{\R}{\mathbb{R}}
\providecommand{\Z}{\mathbb{Z}}
\providecommand{\N}{\mathbb{N}}
\providecommand{\B}{\mathbb{B}}
\providecommand{\Sp}{\mathbb{S}}
\providecommand{\dih}{\mathbb{D}}
\providecommand{\pri}{\mathbb{P}}
\providecommand{\st}{\, :\ }
\providecommand{\rotation}{\textsf{R}}
\providecommand{\fbmsgO}{Z}
\providecommand{\hsd}{\mathscr{H}}
\providecommand{\gsum}{\mathfrak{g}}  
\providecommand{\bsum}{\mathfrak{b}} 
\renewcommand{\complement}{\mathsf{c}}
\providecommand{\Is}{\mathfrak{Is}}
\providecommand{\compgamma}{K}
\newcommand{\mres}{\mathbin{\vrule height 1.6ex depth 0pt width
0.13ex\vrule height 0.13ex depth 0pt width 1.3ex}}
\DeclarePairedDelimiter\abs{\lvert}{\rvert}
\DeclarePairedDelimiter\nm{\lVert}{\rVert}
\DeclarePairedDelimiter\interval{]}{[}
\DeclarePairedDelimiter\Interval{[}{[}
\DeclarePairedDelimiter\intervaL{]}{]}
\DeclareMathOperator{\dist}{dist}
\DeclareMathOperator{\genus}{genus}
\DeclareMathOperator{\spt}{supp}
\pgfmathsetmacro{\picscale}{0.39}
\pgfmathsetmacro{\unitscale}{\picscale*\textwidth/2cm}
\providecommand{\FBMS}[2][\picscale]{\draw(0,0,0)node(FBMS)[inner sep=0]{\includegraphics[width=#1\textwidth]{#2}};}
\pgfmathsetmacro{\thetaO}{72}
\pgfmathsetmacro{\phiO}{90-45/2}
\newenvironment{unitball}[1][\picscale]{%
\tdplotsetmaincoords{\thetaO}{\phiO}
\begin{tikzpicture}[scale={#1*\textwidth/2cm},tdplot_main_coords,line cap=round,line join=round,semithick,baseline={(0,0,0)}]
\pgfmathsetmacro{\laengeA}{-135-45/2+\phiO}
\pgfmathsetmacro{\laengeB}{\laengeA+45}
\begin{scope}[black!30]
\path[tdplot_screen_coords,inner color=white,outer color=cyan!5!black!20](0,0)circle(1);
\foreach\laengengrad in {\laengeA,\laengeB,...,\phiO}{
\tdplotsetthetaplanecoords{\laengengrad}
\pgfmathsetmacro{\start}{-atan( 1/(tan(\thetaO)*( cos(\laengengrad)*sin(\phiO)-cos(\phiO)*sin(\laengengrad))))}
\tdplotdrawarc[tdplot_rotated_coords]{(0,0,0)}{1}{180+\start}{360+\start}{}{}
}
\foreach\breitengrad in {-60,-30,...,60}{
\pgfmathsetmacro{\breitenkreisradius}{cos(\breitengrad)}
\pgfmathsetmacro{\hoehe}{sin(\breitengrad)}\tdplotdrawarc{(0,0,\hoehe)}{\breitenkreisradius}{0}{360}{}{}
}
\end{scope}
}{\begin{scope}[black]
\foreach\laengengrad in {\laengeA,\laengeB,...,\phiO}{
\tdplotsetthetaplanecoords{\laengengrad}
\pgfmathsetmacro{\start}{-atan( 1/(tan(\thetaO)*( cos(\laengengrad)*sin(\phiO)-cos(\phiO)*sin(\laengengrad))))}
\tdplotdrawarc[tdplot_rotated_coords]{(0,0,0)}{1}{\start}{180+\start}{}{}
} 
\foreach\breitengrad in {-60,-30,...,60}{
\pgfmathsetmacro{\breitenkreisradius}{1*cos(\breitengrad)}
\pgfmathsetmacro{\hoehe}{1*sin(\breitengrad)}
\pgfmathsetmacro{\visiblerange}{acos(-sign(\breitengrad)*min(1,abs(tan(\breitengrad)*tan(90-\thetaO)))  )}
\ifdim\visiblerange pt>0pt
\tdplotdrawarc{(0,0,\hoehe)}{\breitenkreisradius}{\phiO-90+\visiblerange}{\phiO-90-\visiblerange}{}{}
\fi
}
\draw[tdplot_screen_coords] (0,0) circle (1);
\end{scope}%
\end{tikzpicture}%
}
\tikzset{
    scale plot marks/.is choice,
    scale plot marks/true/.style={},	
    scale plot marks/false/.code={
        \def\pgfuseplotmark##1{\pgftransformresetnontranslations\csname pgf@plot@mark@##1\endcsname}
    },
every mark/.append style={scale plot marks=false},
bullet/.style={mark=*,mark size=1.125pt},
}
\definecolor{FarbeA}{cmyk}{0,0.5,1,0}
\definecolor{FarbeB}{cmyk}{1,0.5,0,0}
\renewcommand{\amsrefs@warning}[1]{}  
\theoremstyle{plain}
\newtheorem{theorem}{Theorem}[section]
\newtheorem{lemma}[theorem]{Lemma}
\newtheorem{corollary}[theorem]{Corollary} 
\newtheorem{proposition}[theorem]{Proposition}
\theoremstyle{definition}
\newtheorem{definition}[theorem]{Definition}
\theoremstyle{remark}
\newtheorem{remark}[theorem]{Remark}
\title{Topological control for min-max free~boundary~minimal surfaces}
\author{Giada Franz and Mario B. Schulz}
\date{\vspace*{-2ex}}
\newcommand\printaddress{{
\setlength{\parindent}{17pt} 
\medskip 
\par
{\scshape Giada Franz}
\newline
CNRS and Université Gustave Eiffel, LAMA, 77420 Champs-sur-Marne, France
\newline
\textit{E-mail address:} 
\texttt{giada.franz@univ-eiffel.fr}, \texttt{giada.franz@cnrs.fr}
\par\medskip
{\scshape Mario B. Schulz}
\newline 
Università di Trento, 
Dipartimento di Matematica, 
via Sommarive 14, 
38123 Povo, 
Italy
\newline
\textit{E-mail address:} 
\texttt{mario.schulz@unitn.it}
\par
}} 
\begin{document}

\maketitle

\begin{abstract}
We establish general bounds on the topology of free boundary minimal surfaces obtained via min-max methods in compact, three-dimensional ambient manifolds with mean convex boundary. 
We prove that the first Betti number is lower semicontinuous along min-max sequences converging in the sense of varifolds to free boundary minimal surfaces. 
In the orientable case, we obtain an even stronger result which implies that if the number of boundary components increases in the varifold limit, then the genus decreases at least as much. 
We also present several compelling applications, such as the variational construction of a free boundary minimal trinoid in the Euclidean unit ball. 
\end{abstract} 

\section{Introduction}\label{sec:intro} 

Free boundary minimal surfaces are critical points for the area functional among all surfaces whose boundaries are constrained to the boundary of an ambient Riemannian manifold $M$. 
Equivalently, a free boundary minimal surface has vanishing mean curvature and meets $\partial M$ orthogonally along its own boundary. 
The case where the ambient manifold is the Euclidean unit ball is particularly interesting due to the connection with the optimization problem for the first Steklov eigenvalue 
(cf.~\cite{FraserSchoen2011, FraserSchoen2016, KarpukhinKokarevPolterovich2014, GirouardLagace2021}).
 
Constructing examples of embedded free boundary minimal surfaces is a challenging problem, especially in ambient manifolds (such as the Euclidean unit ball) which only allow unstable solutions. 
Among the most successful approaches towards existence results are gluing techniques and min-max methods. 
Gluing methods provide straightforward control on the topology and symmetry of solutions, but usually require the genus or the number of boundary components to be sufficiently large 
(cf.~\cite{FolhaPacardZolotareva2017, KapouleasLi2017, KapouleasMcGrath2020, KapouleasWiygul2017, KapouleasZou2021, CSWnonuniqueness}). 
With min-max methods one can construct examples of embedded free boundary minimal surfaces with low genus and few boundary components, as demonstrated e.\,g.~in \cite{GruterJost1986,Ketover2016FBMS,Li2015,CarlottoFranzSchulz2022}. 
However, controlling the topology of the limit surface is notoriously difficult because min-max sequences converge, a priori, only in the sense of varifolds. 

Within the context of the min-max theory established by Almgren--Pitts \cite{Almgren1965, Pitts1981} and Marques--Neves \cite{MarquesNeves2014, MarquesNeves2017}, direct control on the topology of the limit surface is out of reach so far. 
In fact, when aiming to construct free boundary minimal surfaces with prescribed topology, it is advantageous to employ the min-max theory by Simon--Smith \cite{Smith1982} and Colding--De Lellis \cite{ColdingDeLellis2003} because this variant imposes stricter criteria on the regularity and convergence of the sweepouts (see \cite[§\,2.11]{MarquesNeves2014}).
In this setting, general genus bounds have been obtained in \cite[Theorem~0.6]{DeLellisPellandini2010}, \cite[Theorem~9.1]{Li2015} and \cite{Ketover2019}. 
However, there appears to be no result that provides general control on the number of boundary components of a free boundary minimal surface obtained through min-max methods.
Li \cite[§\,9]{Li2015} has even noted the impossibility of obtaining such a bound -- at least without any assumption on the ambient manifold. 
In this article, we address this gap in the literature by presenting a result that holds in ambient manifolds with strictly mean convex boundary. 
This setting has the advantage that embedded free boundary minimal surfaces are necessarily properly embedded. 
We prove in Theorem~\ref{thm:main} that the first Betti number is lower semicontinuous along min-max sequences converging to free boundary minimal surfaces. 
Moreover, we introduce the notions of genus complexity and boundary complexity (which are closely related to the genus and the number of boundary components respectively) and prove in Theorem~\ref{thm:mainorientable} that their sum is lower semicontinuous along min-max sequences consisting of orientable surfaces.  
A key ingredient in the proof is Simon's lifting lemma, as well as a novel free boundary version of that result, Lemma~\ref{lem:BoundarySimonLemma}. 

We also present several applications. 
In Section \ref{sec:applications}, we first provide a simple, direct proof for the existence of free boundary minimal discs in convex bodies shown in \cite{GruterJost1986} and then discuss the application of our main estimate in the context of the construction of the free boundary minimal surfaces with connected boundary and arbitrary genus obtained in \cite{CarlottoFranzSchulz2022}.
Besides, we use our result to prove that there exists a free boundary minimal surface with Morse index equal to $5$ in the Euclidean unit ball, refining \cite[Theorem~1.4]{ChunPongChu2022}. 
Last but not least, Section \ref{sec:genus0} contains a variational construction of embedded free boundary minimal surfaces in the three-dimensional Euclidean unit ball with genus zero and an arbitrary number of pairwise isometric boundary components which are aligned along the equator (cf. \cite{FolhaPacardZolotareva2017,Ketover2016FBMS}).

\subsection{Notation and convention}

Throughout the article, we consider $M$ to be a compact, three-dimensional Riemannian manifold with strictly mean convex boundary $\partial M$. 
We refer to $M$ as the \emph{ambient} manifold. 
In this context, $G$ denotes a finite group of orientation-preserving isometries of $M$. 
A subset $\Sigma\subset M$ is called \emph{$G$-equivariant} if the action of $G$ on $M$ restricts to an action on $\Sigma$ which commutes with the inclusion map of $\Sigma$ into $M$. 

A submanifold $\Sigma\subset M$ is called \emph{properly embedded}, if it is embedded and 
satisfies $\partial\Sigma=\Sigma\cap\partial M$. 
Throughout this article, we call two submanifolds of $M$ \emph{isometric} if there exists an ambient isometry mapping one onto the other. 
(In the literature, the term ``congruent'' is occasionally used interchangeably.)  
A compact manifold without boundary is called \emph{closed}. 

The $k$-dimensional Hausdorff measure on $M$ is denoted by $\hsd^k$.
In particular, the area of a smooth surface $\Sigma\subset M$ is given by $\hsd^2(\Sigma)$. 
The $k$th Betti number of any topological space $X$, i.\,e.~the rank of the $k$th homology group $H_k(X)$ is denoted by $\beta_k(X)$.  

Our work relies on the theory of varifolds for which we refer the reader e.\,g.~to Chapters 4 and~8 of \cite{Simon1983} (see also \cite[§\,2]{ColdingDeLellis2003} and \cite[§\,1]{DeLellisPellandini2010} for a brief introduction).   
We only consider $2$-varifolds in $M$. 
The \emph{support} $\spt\Gamma$ of a varifold $\Gamma$ is the smallest closed subset of $M$ outside which the mass measure $\nm{\Gamma}$ vanishes identically. 
The $\varepsilon$-neighbourhood around any subset $N\subset M$ is denoted by 
$U_\varepsilon N$. 
Given a varifold $\Gamma$ in $M$ we abuse notation slightly by defining 
\begin{align}\label{eqn:Ueps}
U_{\varepsilon}\Gamma\vcentcolon=\{p\in M\st\dist_{M}(p,\spt\Gamma)<\varepsilon\}.
\end{align} 
More generally, $f(\Gamma)$ shall be understood as short-hand notation for $f(\spt\Gamma)$ whenever the latter is well-defined, for example when $\Gamma$ is induced by a smooth surface and $f=\genus$. 
In particular, this notation does not take multiplicity into account, as $f(m\Gamma)=f(\Gamma)$ for any positive integer $m$.

\subsection{Equivariant min-max theory}

We briefly recall the min-max theory à la Simon--Smith \cite{Smith1982} and Colding--De Lellis \cite{ColdingDeLellis2003}, for the equivariant, free boundary setting.
We provide the basic definitions and summarize the known results in Theorem~\ref{thm:PreviousResults}. 
The theorem is based on several contributions, among which we emphasize \cite{Smith1982,ColdingDeLellis2003,DeLellisPellandini2010,Ketover2019,Ketover2016Equivariant,Ketover2016FBMS,Li2015,Franz2021}.
For more details and comments, we refer to \cite[Part~II]{Franz2022} (where similar notation as in this paper is employed).
 
\begin{definition}[$G$-sweepout] \label{defn:G-sweepout}
Given an ambient manifold $M$ and a group $G$ of isometries as above, we say that $\{\Sigma_t\}_{t\in[0,1]^n}$ is a ($n$-parameter) $G$-sweepout of $M$ if the following properties are satisfied:
\begin{enumerate}[label={\normalfont(\roman*)}]
\item\label{defn:G-sweepout-i} $\Sigma_t$ is a $G$-equivariant subset of $M$ for all $t\in[0,1]^n$;
\item\label{defn:G-sweepout-ii} $\Sigma_t$ is a smooth, properly embedded surface in $M$ for all $t\in\interval{0,1}^n$;
\item\label{defn:G-sweepout-iii} $\Sigma_t$ varies smoothly for $t\in\interval{0,1}^n$ and continuously, in the sense of varifolds, for $t\in[0,1]^n$.
\end{enumerate}
\end{definition}

\begin{remark} \label{rmk:FiniteBadPoints}
One can relax Definition \ref{defn:G-sweepout} slightly by allowing finite sets of points in $M$ and parameters in $[0,1]^n$ where the smoothness in \ref{defn:G-sweepout-ii} and \ref{defn:G-sweepout-iii} is  not satisfied. 
With this more general definition, which is given explicitly in \cite[Definition~9.1.4]{Franz2022}, all the results in this paper remain true. 
\end{remark}

\begin{definition}\label{def:SaturationWidth}
Given a $G$-sweepout $\{\Sigma_t\}_{t\in[0,1]^n}$ of $M$, we define its \emph{$G$-saturation} $\Pi$ to be the set of all 
$\{\Phi(t,\Sigma_t)\}_{t\in[0,1]^n}$, where 
$\Phi\colon[0,1]^n\times M\to M$ is a smooth map such that $\Phi(t,\cdot)$ is a diffeomorphism which commutes with the $G$-action for all $t\in[0,1]^n$ and coincides with the identity for all $t\in\partial[0,1]^n$. 
The \emph{min-max width} of $\Pi$ is then defined as
\[
W_\Pi\vcentcolon=\adjustlimits\inf_{\{\Lambda_t\}\in \Pi~}\sup_{t\in[0,1]^n}\hsd^2({\Lambda_t}).
\]
If a sequence $\{\{\Lambda_t^j\}_{t\in[0,1]^n}\}_{j\in\N}$ in $\Pi$ is minimizing in the sense that 
$\sup_{t\in[0,1]^n}\hsd^2({\Lambda_t^j})\to W_\Pi$ as $j\to\infty$ and if $\{t_j\}_{j\in\N}$ is a sequence in $[0,1]^n$ such that $\hsd^2({\Lambda_{t_j}^j})\to W_\Pi$ as $j\to\infty$, then we call 
$\{\Lambda_{t_j}^j\}_{j\in\N}$ a \emph{min-max sequence}. 
\end{definition}

Below, we recall the equivariant min-max theorem in the free boundary setting (cf.~\cite[Theorem~9.2.1]{Franz2022} for a reference with the same notation).
As aforementioned, the result builds upon several contributions, among which we would like to mention the foundational papers \cite{Smith1982} and \cite{ColdingDeLellis2003}, the results about the lower semicontinuity of the genus in \cite{DeLellisPellandini2010,Ketover2019}, the adaptations to the free boundary setting in \cite{Li2015} and to the equivariant setting in \cite{Ketover2016Equivariant,Ketover2016FBMS}, and the estimate on the equivariant index in \cite{Franz2021}. 
We would like to remark that the statement \cite[Lemma~3.5]{Li2015} attributed to \cite{Gruter1986regularity} requires the additional assumption that $M$ has mean convex boundary, which is satisfied in our setting. 
In fact, under this assumption, the lemma follows from \cite[Theorem~6.1]{GuangLiZhou2020}.  

\begin{theorem}[Equivariant min-max]\label{thm:PreviousResults}
Let $M$ be a three-dimensional Riemannian manifold with strictly mean convex boundary and let $G$ be a finite group of orientation-preserving isometries of $M$.
Let $\{\Sigma_t\}_{t\in[0,1]^n}$ be a $G$-sweepout of $M$. 
If the min-max width $W_\Pi$ of its $G$-saturation satisfies  
\[
W_\Pi > \sup_{t\in \partial[0,1]^n}\hsd^2(\Sigma_t)
\]
then there exists a min-max sequence $\{\Sigma^j\}_{j\in\N}$ of (smooth) $G$-equivariant surfaces converging in the sense of varifolds to 
\[
\Gamma\vcentcolon= \sum_{i=1}^\compgamma m_i\Gamma_i,
\]
where the varifolds $\Gamma_1,\ldots,\Gamma_\compgamma$ are induced by pairwise disjoint, connected, embedded free boundary minimal surfaces in $M$ and where the multiplicities $m_1,\ldots,m_\compgamma$ are positive integers.
Moreover, the support of $\Gamma$ is $G$-equivariant and its $G$-equivariant index is less or equal than $n$.  
Finally, if all surfaces in the sweepout are orientable, then the genus bound
\begin{equation}\label{eqn:PreviousResults-genusbound}
\sum_{i\in\mathcal O}\genus(\Gamma_i)+\frac{1}{2}\sum_{i\in \mathcal N}\bigl(\genus(\Gamma_i)-1\bigr)
\leq\liminf_{j\to\infty}\genus(\Sigma^j) 
\end{equation}
holds. 
Here, $\mathcal O$ respectively $\mathcal N$ denote the set of indices $i\in\{1,\ldots,\compgamma\}$ such that $\Gamma_i$ is orientable respectively nonorientable.
\end{theorem}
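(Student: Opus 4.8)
The plan is to assemble the statement from the equivariant Simon--Smith min-max machinery, adapting each of its ingredients to the free boundary setting. First I would run the discretization and pull-tight procedure of Colding--De Lellis \cite{ColdingDeLellis2003}, in the equivariant form of \cite{Ketover2016Equivariant} and the free boundary form of \cite{Li2015}: from a minimizing sequence $\{\Lambda^j_t\}_{t\in[0,1]^n}$ in $\Pi$ one extracts, using the hypothesis $W_\Pi>\sup_{\partial[0,1]^n}\hsd^2(\Sigma_t)$ to guarantee that the width is attained at an interior parameter, a min-max sequence $\{\Sigma^j\}_{j\in\N}$ of smooth, properly embedded, $G$-equivariant surfaces converging in the sense of varifolds to a $G$-equivariant integral varifold $\Gamma$ that is stationary with free boundary, i.e. stationary under ambient deformations tangent to $\partial M$ along $\partial M$. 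The equivariance of the limit is preserved because all competitors in $\Pi$ are $G$-equivariant and the pull-tight deformation can be carried out $G$-equivariantly.

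Next I would upgrade $\Gamma$ to a smooth object. Interior regularity comes from the Simon--Smith theory \cite{Smith1982, ColdingDeLellis2003} in the equivariant setting; at the boundary, the strict mean convexity of $\partial M$ plays a double role. On the one hand it forces every component of $\spt\Gamma$ to be properly embedded: by the maximum principle a free boundary minimal surface cannot be tangent to $\partial M$ from the inside, so $\partial M$ acts as a barrier. On the other hand it makes the boundary regularity theory applicable; here I would cite \cite{GuangLiZhou2020}*{Theorem~6.1}, which only requires mean convexity and in particular justifies \cite{Li2015}*{Lemma~3.5} in our setting. One concludes that $\spt\Gamma$ is a finite union of connected, embedded free boundary minimal surfaces $\Gamma_1,\dots,\Gamma_\compgamma$, pairwise disjoint since they arise as varifold limits of embedded surfaces, and that $\Gamma=\sum_{i=1}^\compgamma m_i\Gamma_i$ with positive integer multiplicities $m_i$.

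For the bound on the equivariant index I would follow \cite{Franz2021}: if the $G$-equivariant index of $\spt\Gamma$ were at least $n+1$, then $n+1$ linearly independent $G$-equivariant directions of second-variation decrease would allow one to deform an almost optimal sweepout in its $n$ parameters so as to strictly lower its maximal area, contradicting the definition of $W_\Pi$. This is the step where the $n$-parameter structure of the $G$-sweepout is genuinely used.

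Finally, for the genus estimate \eqref{eqn:PreviousResults-genusbound} I would invoke the lower semicontinuity of the genus along Simon--Smith min-max sequences proved in \cite{DeLellisPellandini2010}*{Theorem~0.6} and \cite{Ketover2019}. Its proof rests on Simon's lifting lemma: near each limit surface $\Gamma_i$ the surfaces $\Sigma^j$ decompose into sheets, and for $j$ large the genus contributions of the sheets sitting over distinct $\Gamma_i$ add up; a nonorientable $\Gamma_i$ must be approached by orientable sheets that double-cover it, which accounts for the factor $\tfrac12(\genus(\Gamma_i)-1)$ and is also why the estimate is stated only for sweepouts of orientable surfaces. The main obstacle, and essentially the only point requiring work beyond citing the literature, is to check that this lifting-and-counting argument survives in the free boundary setting, where sheets may carry boundary on $\partial M$; this is exactly what the free boundary Simon lifting lemma, Lemma~\ref{lem:BoundarySimonLemma}, is designed to supply, and it should be established first.
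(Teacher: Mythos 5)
Your proposal takes the same route as the paper, which offers no proof of Theorem~\ref{thm:PreviousResults} at all: it is explicitly a recollection of previous results, assembled from exactly the references you list (Simon--Smith and Colding--De Lellis for the basic scheme, \cite{Ketover2016Equivariant,Ketover2016FBMS} for equivariance, \cite{Li2015} for the free boundary adaptation, \cite{GuangLiZhou2020}*{Theorem~6.1} to repair \cite{Li2015}*{Lemma~3.5} under mean convexity, \cite{Franz2021} for the equivariant index bound, and \cite{DeLellisPellandini2010,Ketover2019} for the genus semicontinuity). Your sketch of each ingredient is consistent with how those sources argue.

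One attribution in your last paragraph is off, and it matters for the logical architecture of the paper. The genus bound \eqref{eqn:PreviousResults-genusbound} in the free boundary setting is \emph{not} supplied by the tree lifting lemma, Lemma~\ref{lem:BoundarySimonLemma}; it is already established in \cite{Li2015}*{Theorem~9.1}, and it rests only on the ordinary Simon lifting lemma for \emph{closed} curves (Lemma~\ref{lem:SimonLemma} in this paper), since the genus is detected by closed curves which can be kept away from $\partial M$. Lemma~\ref{lem:BoundarySimonLemma} is a new contribution of this paper, needed only for the boundary complexity estimate in Theorem~\ref{thm:TopologyOfAlmMinSeq} and hence for the new Theorems~\ref{thm:main} and~\ref{thm:mainorientable}; it is neither used nor needed for Theorem~\ref{thm:PreviousResults}, and presenting it as a prerequisite of that theorem inverts the dependency the paper intends. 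Apart from this misplacement, the assembly is correct.
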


\begin{remark}\label{rem:multiplicity}
The analogous genus bound for min-max sequences in closed ambient manifolds has been improved by Ketover \cite{Ketover2019}, taking the multiplicity of the convergence into account on the left-hand side of \eqref{eqn:PreviousResults-genusbound}. 
In this article, we focus on estimates without multiplicity because this is sufficient for all our current applications. 
Moreover, Wang--Zhou in \cite[Theorem B]{WangZhou2023} proved that, in the closed setting, multiplicity does not occur when the limit is an unstable minimal surface. This result is expected to be true in the free boundary setting as well and therefore the estimate without multiplicity would be sufficient for most applications.
\end{remark}
 
\subsection{Main results}

The goal of this paper is to improve the topological control \eqref{eqn:PreviousResults-genusbound} in Theorem~\ref{thm:PreviousResults}, 
especially in terms of the number of boundary components of the resulting free boundary minimal surface, which has been obscure until now.  
In order to state our main results Theorems \ref{thm:main} and \ref{thm:mainorientable}, we introduce a novel notion of topological complexity. 

\begin{definition}[Topological complexity] \label{defn:abc}
The \emph{genus} of any compact, connected surface $\Sigma$ is defined as the maximum number of disjoint simple closed curves which can be removed from $\Sigma$ without disconnecting it. 
Given any compact, possibly disconnected surface $\Sigma$, let $\mathcal{O}(\Sigma)$ respectively $\mathcal{N}(\Sigma)$ denote the set of its orientable respectively nonorientable connected components and let $\mathcal{B}(\Sigma)$ be the set of all connected components of $\Sigma$ with nonempty boundary. 
We define the \emph{genus complexity} $\gsum(\Sigma)$ and the \emph{boundary complexity} $\bsum(\Sigma)$ of $\Sigma$ by 
\begin{align*}
\gsum(\Sigma)&\vcentcolon=\sum_{\hat\Sigma\in\mathcal{O}(\Sigma)} \operatorname{genus}(\hat\Sigma) + \frac{1}{2}\sum_{\hat\Sigma\in\mathcal{N}(\Sigma)}\bigl(\operatorname{genus}(\hat\Sigma)-1\bigr),
\\[1ex]
\bsum(\Sigma)&\vcentcolon= \sum_{\hat\Sigma\in\mathcal{B}(\Sigma)}\bigl(\beta_0(\partial\hat\Sigma)-1\bigr)
\end{align*}
where $\beta_0(\partial\hat\Sigma)$ denotes the number of boundary components of $\hat\Sigma$ (the $0$th Betti number of~$\partial\hat\Sigma$). 
We recall that whenever $\Gamma$ is a varifold induced by a smooth surface then $\genus(\Gamma)$ is understood as the genus of the support of $\Gamma$; analogously $\gsum(\Gamma)\vcentcolon=\gsum(\spt\Gamma)$ and $\bsum(\Gamma)\vcentcolon=\bsum(\spt\Gamma)$. 
\end{definition}

\begin{remark}\label{rem:complexities}
The complexities $\gsum(\Sigma)$ and $\bsum(\Sigma)$ are nonnegative numbers for any surface $\Sigma$. 
Moreover, $\gsum$ and $\bsum$ are additive with respect to taking unions of connected components and therefore nonincreasing for the operation of discarding connected components. 
The definition of $\gsum(\Sigma)$ is consistent with the left-hand side in \cite[(0.3)]{DeLellisPellandini2010}, which is further elaborated and supported in \cite[§\,10]{DeLellisPellandini2010}. 
Corollary \ref{cor:beta1=2g+b+c} in the appendix states that the first Betti number $\beta_1(\Sigma)$ coincides with the sum of $2\gsum(\Sigma)+\bsum(\Sigma)$ and the number of nonorientable connected components of $\Sigma$ with nonempty boundary. 
\end{remark}

\begin{theorem}\label{thm:main}
In the setting of Theorem \ref{thm:PreviousResults}, where the min-max sequence $\{\Sigma^j\}_{j\in\N}$ converges in the sense of varifolds to $\Gamma$, 
the first Betti number and the genus complexity are lower semicontinuous along the min-max sequence in the sense that 
\begin{align*}
\beta_1(\Gamma)&\leq\liminf_{j\to\infty}\beta_1(\Sigma^j), &
\gsum(\Gamma)&\leq\liminf_{j\to\infty}\gsum(\Sigma^j). 
\end{align*}
\end{theorem}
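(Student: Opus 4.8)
The plan is to transfer the homology of the varifold limit $\Gamma=\sum_{i=1}^{\compgamma}m_i\Gamma_i$ into the approximating surfaces $\Sigma^j$ by lifting suitable systems of curves and arcs through the sheeted structure of the convergence. From the regularity and compactness theory underlying Theorem~\ref{thm:PreviousResults}, after passing to a subsequence there is a finite set $\mathcal P\subset M$ of concentration points such that $\Sigma^j\to\spt\Gamma$ smoothly and with finite integer multiplicity $m_i$ on each $\Gamma_i$, locally on $M\setminus\mathcal P$; strict mean convexity of $\partial M$ guarantees that each $\Gamma_i$ is properly embedded and meets $\partial M$ orthogonally. I would fix $\varepsilon>0$ so small that the geodesic balls (or half-balls near $\partial M$) $U_\varepsilon(p)$, $p\in\mathcal P$, are pairwise disjoint and meet $\spt\Gamma$ only in topological disks. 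Then, for $j$ large, $\Sigma^j\setminus U_\varepsilon(\mathcal P)$ is a disjoint union of normal graphs over $\spt\Gamma\setminus U_\varepsilon(\mathcal P)$ which, over each component of $\spt\Gamma$, forms a (possibly connected) $m_i$-sheeted covering; and since $\mathcal P$ is finite, any prescribed finite collection of curves and arcs on $\spt\Gamma$ can be isotoped off $U_\varepsilon(\mathcal P)$, that is, into the region over which $\Sigma^j$ is graphical.

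For the genus complexity, note first that if the $G$-sweepout consists of orientable surfaces then each $\Sigma^j$ is orientable, so $\gsum(\Sigma^j)=\genus(\Sigma^j)$; since, by Definition~\ref{defn:abc}, $\gsum(\Gamma)$ equals the left-hand side of \eqref{eqn:PreviousResults-genusbound}, the inequality $\gsum(\Gamma)\le\liminf_j\gsum(\Sigma^j)$ is then precisely that estimate. For a general $G$-sweepout one repeats the argument of \cite{DeLellisPellandini2010}: fix on each $\Gamma_i$ a maximal disjoint family of simple closed curves whose removal keeps $\Gamma_i$ connected, lift it through the graphical region by Simon's lifting lemma while keeping the lifts pairwise disjoint inside thin tubular neighbourhoods, and count the lifts with the factors — in particular the factor $\tfrac12$ for those $\Gamma_i\in\mathcal N$ that are approached one-sidedly with even multiplicity — which, as in the proof of \eqref{eqn:PreviousResults-genusbound}, turn the total into a lower bound for $\gsum(\Sigma^j)$.

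For the first Betti number, recall from Corollary~\ref{cor:beta1=2g+b+c} that $\beta_1(\spt\Gamma)=2\gsum(\Gamma)+\bsum(\Gamma)+\nu(\Gamma)$, where $\nu(\Gamma)$ denotes the number of nonorientable components of $\spt\Gamma$ with nonempty boundary. I would represent a system of $\beta_1(\spt\Gamma)$ independent cycles by: the disjoint non-separating ``handle'' curves from the previous step, accounting for $2\gsum(\Gamma)$ once paired with dual curves; closed curves parallel to all but one boundary circle of each orientable component; and properly embedded arcs detecting the remaining boundary class of each nonorientable component with boundary — all carried by the graphical region. Lifting the closed curves by Simon's lifting lemma and the arcs by its free boundary counterpart, Lemma~\ref{lem:BoundarySimonLemma} (which should produce a properly embedded arc of $\Sigma^j$ near any given properly embedded arc of $\Gamma_i$), yields a corresponding system of cycles and relative cycles in $\Sigma^j$. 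Granting that this lifted system is again $\mathbb{Q}$-linearly independent in $\Sigma^j$ — using Poincar\'e--Lefschetz duality on $\Sigma^j$, with the boundary circles created along $\partial U_\varepsilon(\mathcal P)$ capped off, to move between absolute and relative classes — and passing to a subsequence along which $\gsum(\Sigma^j)$ and the boundary contribution to $\beta_1(\Sigma^j)$ are simultaneously minimising, one concludes $\beta_1(\Gamma)\le\liminf_j\beta_1(\Sigma^j)$.

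The heart of the argument, and the step I expect to be the main obstacle, is the verification that the lifted cycles remain linearly independent in $\Sigma^j$, and this is exactly what forces the introduction of the free boundary lifting lemma. Two phenomena must be controlled. First, the behaviour at the concentration points $\mathcal P$, where a priori a handle or a boundary circle of $\Sigma^j$ could be pinched off so that capping the circles $\Sigma^j\cap\partial U_\varepsilon(\mathcal P)$ destroys a lifted class. Second, and more seriously, the interplay between the multiplicity and the possible one-sidedness of the convergence $\partial\Sigma^j\to\partial\spt\Gamma$, which is precisely the mechanism governing the number of boundary components and in which the nonorientable limit components and the term $\nu(\Gamma)$ enter. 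Lemma~\ref{lem:BoundarySimonLemma} should be tailored to this second point: it ought to guarantee that a properly embedded arc and a boundary circle of $\Gamma_i$ admit lifts to $\Sigma^j$ that still meet transversally, so that the corresponding relative cycles stay independent in $H_1(\Sigma^j,\partial\Sigma^j;\mathbb{Q})$; combining this with the closed-curve lifts of Simon's lemma and with duality on the capped-off $\Sigma^j$ then produces the required number of independent classes in $H_1(\Sigma^j;\mathbb{Q})$ and closes the proof.
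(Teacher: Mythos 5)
Your proposal rests on a premise that fails for min-max sequences: you assume that, away from a finite set $\mathcal P$ of concentration points, $\Sigma^j$ converges \emph{smoothly and graphically} with integer multiplicity to $\spt\Gamma$. Theorem~\ref{thm:PreviousResults} only provides convergence in the sense of varifolds; the surfaces $\Sigma^j$ are merely almost minimizing in small annuli, and the smooth, locally graphical convergence holds for the locally \emph{minimized replacements} $V^j$ of Lemma~\ref{lem:LocalPictureConv}, not for $\Sigma^j$ itself. In particular $\Sigma^j$ may carry low-area pieces far from $\spt\Gamma$ and need not be a union of normal graphs anywhere. This is precisely why Simon's lifting lemma (and its tree version) are nontrivial, and why the paper's argument has two components your proposal lacks. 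First, a surgery step: Proposition~\ref{prop:PerformingSurgery} replaces $\Sigma^j$ by $\tilde\Sigma^j\subset U_{2\varepsilon}\Gamma$ obtained through neck and half-neck surgery, and Lemmata~\ref{lem:Betti_under_surgery} and~\ref{lem:gsum_under_surgery} show that $\beta_1$ and $\gsum$ do not increase under surgery, so bounds for $\tilde\Sigma^j$ transfer to $\Sigma^j$. Second, with $\tilde\Sigma^j\subset U_{2\varepsilon}\Gamma$ the linear independence of the lifted classes is immediate: a vanishing integer combination in $H_1(\tilde\Sigma^j)$ maps, under the inclusion into $U_{2\varepsilon}\Gamma\cong\spt\Gamma$ (homotopy equivalence via the retraction), to a vanishing combination of multiples of the original independent classes, forcing all coefficients to be zero. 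Your substitute for this — verifying independence inside $\Sigma^j$ by Poincar\'e--Lefschetz duality and intersection numbers after capping off circles along $\partial U_\varepsilon(\mathcal P)$ — is exactly the step you flag as the main obstacle, and it is not achievable with the stated tools: Simon's lifting lemma only produces a closed curve homotopic to a multiple of $\gamma$ in $U_\varepsilon\Gamma_i$, with no control on how distinct lifts intersect each other or on disjointness, so the intersection pairing of the lifted system is not controlled.

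Two further points. For $\gsum$ with a possibly nonorientable sweepout you defer to ``repeating the argument of \cite{DeLellisPellandini2010}''; the paper instead gives a self-contained argument via Corollary~\ref{cor:homology2gsum}, which identifies $2\gsum(\Gamma)$ with the rank of $H_1(\Gamma)/\iota(H_1(\partial\Gamma))$, lifts curves generating this quotient, and uses that every boundary component of $\tilde\Sigma^j$ is homotopic in $U_{2\varepsilon}\Gamma\cap\partial M$ (a union of annuli) to a multiple of a boundary component of $\Gamma$ — again this requires $\partial\tilde\Sigma^j\subset U_{2\varepsilon}\Gamma\cap\partial M$, i.e.\ the surgery. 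Finally, for $\beta_1$ you propose to detect some classes of nonorientable boundary components by properly embedded \emph{arcs}; arcs are relative cycles and do not represent classes in $H_1(\Sigma^j)$, and the detour through Lefschetz duality is neither needed nor carried out. The paper simply represents $\beta_1(\Gamma)$ independent classes by \emph{closed} curves (Theorem~2A.1 of \cite{Hatcher2002}) and lifts them with Lemma~\ref{lem:SimonLemma}; the tree lifting lemma, Lemma~\ref{lem:BoundarySimonLemma}, is reserved for the boundary complexity bound in Theorem~\ref{thm:TopologyOfAlmMinSeq}, which feeds into Theorem~\ref{thm:mainorientable} rather than Theorem~\ref{thm:main}.
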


If all the surfaces in the sweepout are orientable, then the lower semicontinuity of the genus complexity stated in Theorem \ref{thm:main} recovers estimate \eqref{eqn:PreviousResults-genusbound} of Theorem~\ref{thm:PreviousResults}. 
The control on the first Betti number is the main novelty in Theorem~\ref{thm:main}. 
In the case where all surfaces in the min-max sequence are orientable, 
we obtain the following sharpened version of this estimate. 
 
\begin{theorem}\label{thm:mainorientable}
In the setting of Theorem \ref{thm:PreviousResults}, if all surfaces in the min-max sequence $\{\Sigma^j\}_{j\in\N}$ are orientable, then the sum of the genus and boundary complexities is lower semicontinuous along the min-max sequence in the sense that 
\begin{align*}
\bsum(\Gamma)+\gsum(\Gamma)&\leq\liminf_{j\to\infty}\bsum(\Sigma^j)+\gsum(\Sigma^j).
\end{align*}
\end{theorem}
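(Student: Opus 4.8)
The plan is to reduce the assertion to a homological count on the min-max surfaces $\Sigma^j$ and then to produce the required homology classes by lifting suitable curves from the limit components $\Gamma_1,\dots,\Gamma_{\compgamma}$, using Simon's lifting lemma together with its free boundary counterpart Lemma~\ref{lem:BoundarySimonLemma}. For a compact orientable surface $\Sigma$ let $Q_\Sigma$ denote the intersection form on $H_1(\Sigma;\mathbb{Q})$. One checks that its radical is spanned by the boundary circles of $\Sigma$ and has dimension $\bsum(\Sigma)$, while $H_1(\Sigma;\mathbb{Q})$ modulo the radical is symplectic of dimension $2\gsum(\Sigma)$; hence a maximal $Q_\Sigma$-isotropic subspace of $H_1(\Sigma;\mathbb{Q})$ has dimension exactly $\gsum(\Sigma)+\bsum(\Sigma)$. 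Since pairwise disjoint simple closed curves on an orientable surface span a $Q_\Sigma$-isotropic subspace, it follows that whenever $\Sigma$ carries $N$ pairwise disjoint simple closed curves with linearly independent classes in $H_1(\Sigma;\mathbb{Q})$, then $\gsum(\Sigma)+\bsum(\Sigma)\ge N$. As $\gsum$ and $\bsum$ are additive over the connected components $\Gamma_1,\dots,\Gamma_\compgamma$ of $\spt\Gamma$, it therefore suffices to exhibit, for every $j$ in a subsequence along which $\gsum(\Sigma^j)+\bsum(\Sigma^j)$ tends to its $\liminf$, a family of $\sum_i\bigl(\gsum(\Gamma_i)+\bsum(\Gamma_i)\bigr)$ pairwise disjoint simple closed curves on $\Sigma^j$ with linearly independent homology classes.

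\emph{Lifting from the limit.} We would fix $\varepsilon>0$ so small that the neighbourhoods $U_\varepsilon\Gamma_i$ are pairwise disjoint and the nearest-point projections $\pi_i\colon U_\varepsilon\Gamma_i\to\Gamma_i$ are smooth retractions compatible with $\partial M$; this uses that each $\Gamma_i$ is properly embedded and meets $\partial M$ orthogonally, which is ensured by the mean convexity of $\partial M$. On each $\Gamma_i$ we choose a maximal $Q_{\Gamma_i}$-isotropic system of $\gsum(\Gamma_i)+\bsum(\Gamma_i)$ pairwise disjoint simple closed curves with independent classes, namely $\gsum(\Gamma_i)$ interior curves spanning a Lagrangian of $H_1(\Gamma_i;\mathbb{Q})/\operatorname{rad} Q_{\Gamma_i}$ together with $\bsum(\Gamma_i)$ curves each parallel to a distinct boundary circle of $\Gamma_i$; for nonorientable $\Gamma_i$ one works instead with two-sided curves and the orientation double cover as in \cite{DeLellisPellandini2010}. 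Simon's lifting lemma applied to the interior curves and Lemma~\ref{lem:BoundarySimonLemma} applied to the boundary-parallel ones then provide, for $j$ large in our subsequence, pairwise disjoint simple closed curves $\widetilde\gamma^{\,i}_1,\widetilde\gamma^{\,i}_2,\dots\subset U_\varepsilon\Gamma_i$ whose $\pi_i$-images are homotopic in $\Gamma_i$ to the chosen curves; lifts associated with different indices $i$ are automatically disjoint since the $U_\varepsilon\Gamma_i$ are. After isotoping the chosen curves off the finitely many points of $U_\varepsilon\Gamma_i$ at which handles or necks of $\Sigma^j$ concentrate, each lift can moreover be taken to project diffeomorphically onto its image.

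\emph{Linear independence.} It remains to verify that these $\sum_i(\gsum(\Gamma_i)+\bsum(\Gamma_i))$ curves have linearly independent classes in $H_1(\Sigma^j;\mathbb{Q})$, and this cannot be localised to the neighbourhoods $U_\varepsilon\Gamma_i$ separately: unlike the genus, the sum $\gsum+\bsum$ is \emph{not} superadditive over disjoint subsurfaces—two disjoint annuli inside a sphere already violate this—so independence has to be established globally on $\Sigma^j$. The plan is to also lift a \emph{dual} system on each $\Gamma_i$, that is, simple closed curves and properly embedded arcs $\eta^i_1,\eta^i_2,\dots$ with $\gamma^i_k\cdot\eta^i_l=\delta_{kl}$, which exists by Lefschetz duality and whose arcs—needed to dualise the boundary-parallel curves—are precisely the reason Lemma~\ref{lem:BoundarySimonLemma} is indispensable. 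Selecting, near the unique intersection point of $\gamma^i_l$ with $\eta^i_l$, the same local sheet of $\Sigma^j$ over $\Gamma_i$ for both lifts, one arranges $\widetilde\gamma^{\,i}_k\cdot\widetilde\eta^{\,i}_l=\pm\delta_{kl}$ on $\Sigma^j$, while $\widetilde\gamma^{\,i}_k$ is disjoint from $\widetilde\eta^{\,i'}_l$ whenever $i\ne i'$. Pairing a hypothetical linear relation among the classes $[\widetilde\gamma^{\,i}_k]$ against the $[\widetilde\eta^{\,i'}_l]$ in $H_1(\Sigma^j;\mathbb{Q})$ and $H_1(\Sigma^j,\partial\Sigma^j;\mathbb{Q})$ then forces all coefficients to vanish, which is what the homological reduction requires.

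The genuine difficulty of the proof therefore lies in two places. First, one must establish Lemma~\ref{lem:BoundarySimonLemma} in a form strong enough to yield both the boundary-parallel curves and the dual arcs above near $\partial M$. Second, one must carry out the bookkeeping of the preceding paragraph carefully, tracking homology classes and, above all, intersection numbers through the nearest-point projections $\pi_i$, which fail to be injective because of the multiplicity of the varifold convergence; this is the step where the ``same sheet'' choices must be made consistently. Once this is in place the homological and linear-algebraic steps are routine, and the nonorientable limit components are absorbed by passing to orientation double covers exactly as in the classical genus estimate.
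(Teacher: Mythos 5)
There is a genuine gap, and it sits exactly where you park it as ``the genuine difficulty'': your argument needs either (a) pairwise \emph{disjoint, simple} closed lifts on $\Sigma^j$, or (b) control of \emph{intersection numbers} between lifts on $\Sigma^j$, and neither is supplied by Simon's lifting lemma or by Lemma~\ref{lem:BoundarySimonLemma}. Lemma~\ref{lem:SimonLemma} only produces, for each curve $\gamma\subset\Gamma_i$, some closed curve $\gamma^j\subset\Sigma^j\cap U_\varepsilon\Gamma_i$ homotopic \emph{in the ambient neighbourhood $U_\varepsilon\Gamma_i$} to $m\gamma$ for an uncontrolled multiplicity $1\le m\le m_i$; it does not make $\gamma^j$ simple, does not make different lifts disjoint, and gives no handle on $\widetilde\gamma^{\,i}_k\cdot\widetilde\eta^{\,i}_l$ computed \emph{on the surface} $\Sigma^j$ (the homotopies live in $U_\varepsilon\Gamma_i$, not in $\Sigma^j$, so they carry no information about intersection pairings on $\Sigma^j$). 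The lifts are built by an iterative ball-by-ball selection of connected components, and there is no mechanism to force two lifts to stay on ``the same sheet'' near a prescribed point, nor to meet transversally exactly once. So both your isotropic-subspace count and your dual-system pairing rest on unproven (and, with these tools, unobtainable) properties of the lifts. A further, smaller issue: boundary-parallel curves are closed, so Lemma~\ref{lem:BoundarySimonLemma} is not the right tool for them, and the tree lifting lemma as stated lifts a single properly embedded rooted tree, not a system of arcs with prescribed pairwise intersection pattern against a family of closed lifts.

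The paper avoids all of this by inserting a surgery step that your proposal skips. One first replaces $\Sigma^j$ by $\tilde\Sigma^j\subset U_{2\varepsilon}\Gamma$ (Proposition~\ref{prop:PerformingSurgery}); the containment is what makes linear independence testable, since any relation in $H_1(\tilde\Sigma^j)$, resp.\ in $H_1(\tilde\Sigma^j)/\iota(H_1(\partial\tilde\Sigma^j))$, pushes forward to $H_1(U_{2\varepsilon}\Gamma)\cong H_1(\Gamma)$ where the lifted classes are independent by construction --- no intersection numbers are ever needed. The boundary complexity of $\tilde\Sigma^j$ is then bounded not by boundary-parallel curves but by lifting a properly embedded tree whose leaves meet every boundary component of $\Gamma_i$, forcing a single component of $\tilde\Sigma^j$ to have at least $\beta_0(\partial\Gamma_i)$ boundary components. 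Finally, the passage from $\tilde\Sigma^j$ back to $\Sigma^j$ for the combination $\bsum+\gsum$ is a purely combinatorial Euler-characteristic computation (Lemma~\ref{lem:orientable_surgery}), and this is the only place orientability of the min-max sequence enters. If you want to salvage your route, you would have to prove a substantially stronger lifting lemma with disjointness and intersection control; as written, the proposal reduces the theorem to claims that are at least as hard as the theorem itself.
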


\pagebreak[3]

\begin{remark}\label{rem:orientable}
Properly embedded surfaces in simply connected, orientable, three-dimensional Riemannian manifolds are necessarily orientable 
(see \cite[Lemma~C.1]{ChodoshKetoverMaximo2017}). 
Therefore, Theorem~\ref{thm:mainorientable} applies when the ambient manifold $M$ is simply connected and orientable, hence, in particular, when $M$ is the three-dimensional Euclidean unit ball. 
\end{remark}

It is clear that we cannot expect the boundary complexity alone to be lower semicontinuous when passing to the limit along a min-max sequence. 
As an example one can imagine that a (hypothetical) sweepout of the Euclidean unit ball $\B^3$ consisting of annuli is given such that the corresponding min-max limit is a catenoid $\Gamma$. 
Suppose we modify each surface in the sweepout by attaching a very thin ``half-neck'' ribbon which connects the pair of boundary components. 
Any min-max sequence would then consist of surfaces $\Sigma^j$ with genus one and connected boundary. 
However, since the number of parameters has not been increased, we would still expect the min-max limit to be a catenoid, and in particular, $\gsum(\Gamma)=\gsum(\Sigma^j)-1$ and $\bsum(\Gamma)=\bsum(\Sigma^j)+1$ for all $j$. 
In this sense we expect Theorem~\ref{thm:mainorientable} to be sharp. 

A similar line of reasoning indicates that it is necessary to assume orientability of the min-max sequence in Theorem \ref{thm:mainorientable}. 
Suppose that the ambient manifold $M$ allows us to attach a thin ``twisted half-neck'' to connect the two boundary components of an annulus, in such a way that the resulting surface is properly embedded with the topology of a punctured Klein bottle. 
If we have a min-max sequence $\{\Sigma^j\}_{j\in\N}$ consisting of such surfaces, which again converges to a free boundary minimal annulus $\Gamma$ in $M$, then $\gsum(\Sigma^j)=1/2$ and $\bsum(\Sigma^j)=0$ but $\gsum(\Gamma)=0$ and $\bsum(\Gamma)=1$.  

In the hypothetical scenarios above, we modified a given sweepout by attaching a dispensable half-neck. 
In Section \ref{sec:surgery}, we rigorously define the ``inverse'' of this operation. 
The idea is to modify any given sweepout by cutting away necks and half-necks that disappear anyway when passing to the min-max limit, such that after this surgery procedure (cf.  Definition~\ref{defn:surgery}) we do have lower semicontinuity of the genus complexity and the boundary complexity independently. 
This statement is made precise in Theorem \ref{thm:TopologyOfAlmMinSeq} which serves as an intermediate step in the proof of Theorems \ref{thm:main} and \ref{thm:mainorientable}.   

We emphasize that in the aforementioned Theorem \ref{thm:TopologyOfAlmMinSeq}, an estimate on the boundary complexity is more meaningful than an estimate on the number of boundary components. 
Indeed, the surgery procedure described in Section~\ref{sec:surgery} could generate arbitrarily many spurious topological discs (with limit equal to zero in the sense of varifolds). 
These discs contribute to the number of boundary components, making this quantity a weaker upper bound, but they do not contribute to the boundary complexity since $\bsum$ vanishes for topological discs.

\paragraph{Acknowledgements.} 
The authors would like to thank Alessandro Carlotto for many helpful comments and discussions. 
This project has received funding from the Deutsche Forschungsgemeinschaft (DFG) under Germany's Excellence Strategy EXC 2044 -- 390685587, Mathematics M\"unster: Dynamics--Geometry--Structure, and the Collaborative Research Centre CRC 1442, Geometry: Deformations and Rigidity, and it has received funding from the European Research Council (ERC) under the European Union's Horizon 2020 research and innovation programme (grant agreement No.~947923). 
G.\,F.~was partially supported by NSF grant DMS-2405361. 
Finally, the authors thank the anonymous referee for valuable suggestions.

\section{Direct applications}\label{sec:applications}

In this short section, we demonstrate the strength of our result by recovering the topological control on some of the known free boundary minimal surfaces constructed via min-max methods. 

\paragraph{Discs.} 
As a first example, we choose a strictly convex subset of $\R^3$ as ambient manifold $M$ and the trivial group as $G$. 
Grüter and Jost \cite{GruterJost1986} proved that $M$ contains an embedded, free boundary minimal disc and our estimates can be applied to reproduce the topological control established in \cite[§\,5]{GruterJost1986}. 
Indeed, since the sweepout defined in \cite[§\,1 (1)--(4)]{GruterJost1986} consists of topological discs and satisfies the width estimate, Theorem \ref{thm:PreviousResults} implies the existence of some $\Gamma\vcentcolon=\sum_{i=1}^\compgamma m_i\Gamma_i$ induced by embedded free boundary minimal surfaces in $M$. 
Theorem~\ref{thm:main} then yields $\beta_1(\Gamma)=0$. 
Since $\R^3$ does not contain any closed minimal surfaces, 
we directly obtain (e.\,g.~by Proposition \ref{prop:TopCptSurf}) that each connected component $\Gamma_i$ has genus zero and connected boundary. 
In other words, $\Gamma$ is a union of embedded free boundary minimal discs.

\begin{remark} 
Our argument remains valid if the convexity assumption on $\partial M$ is relaxed to strict \emph{mean} convexity, 
provided that $M$ still allows a sweepout consisting of topological discs and satisfying the width estimate.  
\end{remark}

\paragraph{Connected boundary and arbitrary genus.} 

Our second application concerns the free boundary minimal surfaces constructed in \cite{CarlottoFranzSchulz2022}, where the ambient manifold is the Euclidean unit ball $\B^3$ and $G$ is the dihedral group $\dih_{g+1}$ (cf.~Section \ref{sec:genus0}). 
For any given $g\in\N$ there is a $\dih_{g+1}$-sweepout of $\B^3$ consisting of connected surfaces with genus $g$ and connected boundary and its $\dih_{g+1}$-saturation satisfies the width estimate (cf.~\cite[Lemma 2.2 and Corollary~3.8]{CarlottoFranzSchulz2022}). 
A corresponding min-max sequence converges in the sense of varifolds (with multiplicity one) to a connected, $\dih_{g+1}$-equivariant free boundary minimal surface $M_g\subset\B^3$ which contains the origin but is not a topological disc (cf.~\cite[Lemma 4.2 and Remark~4.1]{CarlottoFranzSchulz2022}). 
All surfaces in question are properly embedded in $\B^3$ and hence orientable by Remark \ref{rem:orientable}. 
In what follows, we compare the original control on the topology of $M_g$ with the estimates obtained by applying Theorems~\ref{thm:main} and \ref{thm:mainorientable}. 

Originally, in \cite[Lemmata 4.4 and 4.5]{CarlottoFranzSchulz2022}, the connectivity of the boundary $\partial M_g$ is established first, relying on the special structure of the chosen sweepout. 
Then the genus is determined using that $M_g$ has at least genus one because it is not a disc but has connected boundary. 

Applying Theorems~\ref{thm:main} and \ref{thm:mainorientable}, we obtain the estimates
\begin{align}\label{eqn:20230512}
\genus(M_g)=\gsum(M_g)&\leq g, & 
\bsum(M_g)+\gsum(M_g)&\leq g,
\end{align}
which do not allow us to determine the boundary complexity ahead of the genus.
Nevertheless, the recent result \cite[Lemma ~2.1~(i)]{McGrath-Zou} directly implies $\genus(M_g)\geq1$  since $M_g$ contains the origin but is not a topological disc.  
From the $\dih_{g+1}$-equivariance, we then obtain $\genus(M_g)=g$ as stated e.\,g.~in \cite[Corollary 2.10]{Buzano2025}. 
The second estimate in \eqref{eqn:20230512} now yields $\bsum(M_g)\leq 0$. 
Since $M_g$ is connected with nonempty boundary, $\beta_0(\partial M_g)=1$ follows.

\begin{remark}
With the same arguments as above, one can prove that the free boundary minimal surface $\Sigma_g$ from \cite[Theorem 1.1]{Ketover2016FBMS} has genus $g$ and at most three boundary components for all $g\in\N$. 
Only for sufficiently large $g$, it is known that $\Sigma_g$ has exactly three boundary components (cf. \cite[Appendix~D]{CSWnonuniqueness}).
\end{remark}
 
\paragraph{Index five.} 
In \cite[Theorem 1.4]{ChunPongChu2022} Chu applied (nonequivariant) min-max theory to show that the Euclidean unit ball contains an embedded free boundary minimal surface $\Gamma$ with genus $0$ or $1$ and Morse index $4$ or $5$, which is neither isometric to the equatorial disc nor the critical catenoid. 
By gaining topological control on $\Gamma$ using our results, we can show additionally that $\Gamma$ has in fact Morse index equal to $5$ as conjectured in \cite[§\,1.1]{ChunPongChu2022}. 
Indeed, (employing the relaxed definition discussed in Remark~\ref{rmk:FiniteBadPoints}) all surfaces $\Sigma_t$ in the sweepout constructed in \cite[§\,3]{ChunPongChu2022} 
satisfy $\bsum(\Sigma_t)+\gsum(\Sigma_t)\leq1$. 
Therefore, Theorem \ref{thm:mainorientable} yields $\bsum(\Gamma)+\gsum(\Gamma)\leq1$. 
In particular, since $\Gamma$ is connected, it has either genus one and connected boundary, or genus zero and two boundary components. 
(Any solution with genus zero and connected boundary is isometric to the equatorial disc by \cite{Nitsche1985}, and this case has already been excluded.) 
In either case, it follows that the Morse index of $\Gamma$ must be equal to $5$, indeed: 
\begin{itemize}
\item if $\Gamma$ has genus one then its Morse index cannot be equal to $4$ by \cite[Corollary~7.2]{Devyver2019} respectively 
\cite[Corollary~3.10]{Tran2020}
 because by construction, $\Gamma$ is embedded;
\item if $\Gamma$ has genus zero and two boundary components, then its Morse index cannot be equal to $4$ because otherwise it would be isometric to the critical catenoid by \cite[Corollary~7.3]{Devyver2019} 
respectively \cite[Corollary~3.11]{Tran2020} 
and this case has already been excluded. 
\end{itemize} 

\begin{remark}
We emphasize that the argument above relies crucially on the sharpness of the estimate for $\bsum+\gsum$ in Theorem \ref{thm:mainorientable}. 
For comparison, the bound $\beta_1(\Gamma)\leq2$ on the first Betti number (as given by Theorem \ref{thm:main}) would allow a surface $\Gamma$ with genus zero and three boundary components, and the results in 
\cite{Devyver2019} respectively \cite{Tran2020} do not exclude the possibility of such a surface having Morse index equal to $4$.

It is unknown whether the surface $\Gamma$ with index $5$ and the surface $M_1$ with genus $g=1$ and connected boundary constructed in \cite[Theorem 1.1]{CarlottoFranzSchulz2022} are isometric. 
Numerical simulations suggest that $M_1$ has indeed Morse index equal to $5$ but even then the uniqueness question remains open. 
\end{remark}

\section{Surgery}\label{sec:surgery}

In geometric topology, the concept of surgery originated from the work of Milnor \cite{Milnor1961} and refers to techniques that allow the controlled construction of a finite dimensional manifold from another. 
More specifically, surgery involves removing parts of a manifold and replacing them with corresponding parts of another manifold such that the cut matches up. 
Through surgery, we can modify the topology of a manifold while retaining certain desired properties.

\begin{definition}[{cf. Figure \ref{fig:neck} and \cite[Definition~2.2]{DeLellisPellandini2010}}]
\label{defn:surgery}
Let $\Sigma$ and $\tilde\Sigma$ be two smooth, compact surfaces which are properly embedded in the ambient manifold $M$. 
\begin{enumerate}[label={(\alph*)}]
\item\label{defn:surgery-a} 
We say that $\tilde\Sigma$ is obtained from $\Sigma$ by \emph{cutting away a neck} if  
\begin{itemize}[nosep]
\item $\Sigma\setminus\tilde\Sigma$ is homeomorphic to $\Sp^1\times\interval{0,1}$,
\item $\tilde\Sigma\setminus\Sigma$ is homeomorphic to the disjoint union of two open discs, 
\item the closure of $\Sigma\bigtriangleup\tilde\Sigma$ is a topological sphere contained in the interior of $M$.
\end{itemize}
\item\label{defn:surgery-b} 
We say that $\tilde\Sigma$ is obtained from $\Sigma$ by \emph{cutting away a half-neck} if 
\begin{itemize}[nosep]
\item $\Sigma\setminus\tilde\Sigma$ is homeomorphic to $[0,1]\times\interval{0,1}$,
\item $\tilde\Sigma\setminus\Sigma$ is homeomorphic to the disjoint union of two relatively open half-discs, i.\,e.  
sets homeomorphic to $\Interval{0,1}\times\interval{0,1}$,
\item the closure of $\Sigma\bigtriangleup\tilde\Sigma$ is topological disc which is properly embedded in $M$. 
\end{itemize}
\item\label{defn:surgery-c}
We say that $\tilde\Sigma$ is obtained from $\Sigma$ through \emph{surgery} if there is a finite number of surfaces $\Sigma=\Sigma_0,\Sigma_1,\ldots,\Sigma_N=\tilde\Sigma$ such that each $\Sigma_k$ for $k=1,\ldots,N$ is
\begin{itemize}[nosep]
\item either the union of some of the connected components of $\Sigma_{k-1}$,
\item or obtained from $\Sigma_{k-1}$ by cutting away a neck, or a half-neck, as defined above.
\end{itemize}
\end{enumerate}
\end{definition}

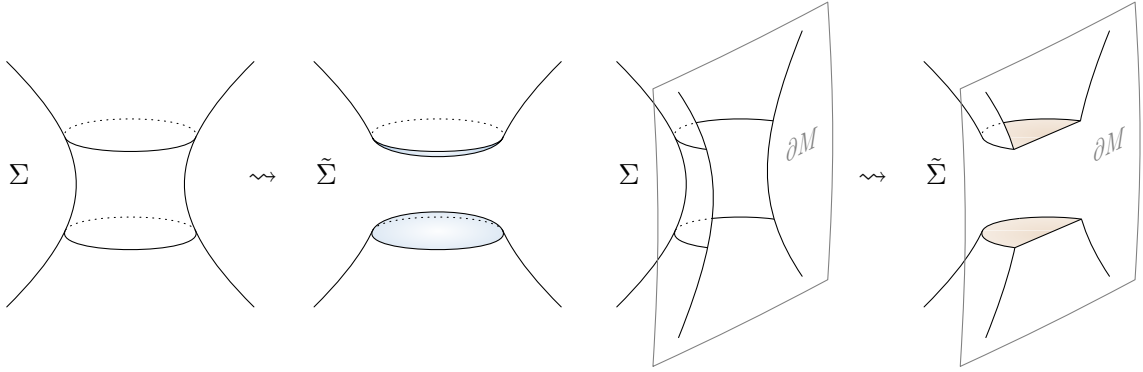
\begin{figure} 
\pgfmathsetmacro{\globalscale}{2.3}
\pgfmathsetmacro{\Radius}{1}
\pgfmathsetmacro{\angle}{45}
\pgfmathsetmacro{\speed}{\Radius*0.75}
\pgfmathsetmacro{\viewA}{0.726}
\pgfmathsetmacro{\ysl}{0.5}
\pgfmathsetmacro{\xsc}{0.5}
\pgfmathsetmacro{\viewB}{0.726*\viewA}
\tikzset{
rightbow/.style={insert path={(\angle:\Radius)..controls+(\angle:-\speed)and+(-\angle:-\speed)..(-\angle:\Radius)}},
leftbow/.style={insert path={(\angle:-\Radius)..controls+(\angle:\speed)and+(-\angle:\speed)..(-\angle:-\Radius)}},
arc4/.style={insert path={(ur.center)..controls+($\viewA*(ur.east)-\viewA*(ur.center)$)and+($\viewA*(ul.west)-\viewA*(ul.center)$)..(ul.center)}},
arc3/.style={insert path={(ur.center)..controls+($\viewB*(ur.west)-\viewB*(ur.center)$)and+($\viewB*(ul.east)-\viewB*(ul.center)$)..(ul.center)}},
arc2/.style={insert path={(dr.center)..controls+($\viewB*(dr.west)-\viewB*(dr.center)$)and+($\viewB*(dl.east)-\viewB*(dl.center)$)..(dl.center)}},
arc1/.style={insert path={(dr.center)..controls+($\viewA*(dr.east)-\viewA*(dr.center)$)and+($\viewA*(dl.west)-\viewA*(dl.center)$)..(dl.center)}},
}
\begin{tikzpicture}[line cap=round,line join=round,scale=\globalscale]
\pgfdeclarefading{fadeout}{\tikz[]{
\shade[
left color=pgftransparent!0, 
right color=pgftransparent!0, 
middle color=pgftransparent,
]
(0,0)circle(\Radius)
;}} 
\path[rightbow] 
node[pos=0.3,sloped,minimum size=1cm,inner sep=0](ur){}
node[pos=0.7,sloped,minimum size=1cm,inner sep=0](dr){}
;
\path[leftbow]
node[pos=0.3,sloped,minimum size=1cm,inner sep=0](dl){}
node[pos=0.7,sloped,minimum size=1cm,inner sep=0](ul){}
;
\begin{scope}
\clip[rightbow]arc(-\angle:-180+\angle:\Radius)[leftbow]arc(180-\angle:\angle:\Radius);
\path[inner color=FarbeB!66!black,outer color=white,path fading=fadeout](0,0)circle(\Radius);
\end{scope}
\draw[arc1][arc3];
\draw[rightbow][leftbow];
\draw[dotted][arc2][arc4];
\draw({-0.9*cos(\angle)*\Radius},0)node[anchor=base]{$\Sigma$};
\pgfresetboundingbox
\useasboundingbox[xscale=0.8](0,0)circle(\Radius);
\end{tikzpicture}
\;
\begin{tikzpicture}[line cap=round,line join=round,scale=\globalscale]
\path[rightbow] 
node[pos=0.3,sloped,minimum size=1cm,inner sep=0](ur){}
node[pos=0.7,sloped,minimum size=1cm,inner sep=0](dr){}
;
\path[leftbow]
node[pos=0.3,sloped,minimum size=1cm,inner sep=0](dl){}
node[pos=0.7,sloped,minimum size=1cm,inner sep=0](ul){}
;
\begin{scope}
\clip[rightbow]arc(-\angle:-180+\angle:\Radius)[leftbow]arc(180-\angle:\angle:\Radius);
\clip
[arc2]-|(-\Radius,-\Radius)--(\Radius,-\Radius)|-cycle
[arc3]-|(-\Radius, \Radius)--(\Radius, \Radius)|-cycle
;
\path[inner color=FarbeB!66!black,outer color=white,path fading=fadeout](0,0)circle(\Radius);
\end{scope}
\draw[inner color=FarbeB!66!black!1,outer color=FarbeB!66!black!15]
[arc1]..controls+($\viewA*(dl.east)-\viewA*(dl.center)$)and+($\viewA*(dr.west)-\viewA*(dr.center)$)..cycle;
\draw[inner color=FarbeB!66!black!1,outer color=FarbeB!66!black!30]
[arc3]..controls+($\viewA*(ul.east)-\viewA*(ul.center)$)and+($\viewA*(ur.west)-\viewA*(ur.center)$)..cycle;
\begin{scope}
\clip
[arc2]-|(-\Radius,-\Radius)--(\Radius,-\Radius)|-cycle
[arc3]-|(-\Radius, \Radius)--(\Radius, \Radius)|-cycle 
;
\draw[rightbow][leftbow];
\end{scope}
\draw[dotted][arc2][arc4];
\draw({-\Radius},0)node[anchor=base]{$\leadsto$}
({-0.9*cos(\angle)*\Radius},0)node[anchor=base]{$\tilde\Sigma$};
\pgfresetboundingbox
\useasboundingbox[xscale=0.8](0,0)circle(\Radius);
\end{tikzpicture}
\hfill
\begin{tikzpicture}[line cap=round,line join=round,scale=\globalscale]
\path[rightbow] 
node[pos=0.3,sloped,minimum size=1cm,inner sep=0](ur){}
node[pos=0.7,sloped,minimum size=1cm,inner sep=0](dr){}
;
\path[leftbow]
node[pos=0.3,sloped,minimum size=1cm,inner sep=0](dl){}
node[pos=0.7,sloped,minimum size=1cm,inner sep=0](ul){}
;
\begin{scope}
\clip[rightbow]arc(-\angle:-180+\angle:\Radius)[leftbow]arc(180-\angle:\angle:\Radius);
\clip[yslant=\ysl,xscale=\xsc][rightbow]to+(-2,0)to[bend left=120,looseness=10]cycle;
\path[inner color=FarbeA!66!black,outer color=white,path fading=fadeout](0,0)circle(\Radius);
\end{scope}
\draw[yslant=\ysl,xscale=\xsc][rightbow][leftbow];
\begin{scope}
\clip[yslant=\ysl,xscale=\xsc][leftbow]--(\angle:\Radius)[rightbow]--(\angle:-\Radius);
\draw[arc2][arc4];
\end{scope}
\begin{scope}
\clip[yslant=\ysl,xscale=\xsc][leftbow]to[bend right=90]cycle;
\draw[arc1][arc3];
\draw[dotted][arc2][arc4];
\end{scope}
\draw[leftbow];
\draw({-0.9*cos(\angle)*\Radius},0)node[anchor=base]{$\Sigma$};
\draw[yslant=\ysl,xscale=\xsc,black!50]
(-1,-0.8)to[bend right=7]
(-1, 0.8)to[bend right=1]
( 1, 0.8)to[bend left =7]
( 1,-0.8)to[bend left =1]cycle
({cos(\angle)*\Radius},0)node[yslant=\ysl,xscale=0.66,anchor=base]{$\partial M$};
\pgfresetboundingbox
\useasboundingbox[xscale=0.8](0,0)circle(\Radius);
\end{tikzpicture}
\;
\begin{tikzpicture}[line cap=round,line join=round,scale=\globalscale]
\path[name path=rbow,yslant=\ysl,xscale=\xsc][rightbow];
\path[name path=lbow,yslant=\ysl,xscale=\xsc][leftbow];
\path[name path=a1][arc1];
\path[name path=a2][arc2];
\path[name path=a3][arc3];
\path[name path=a4][arc4];
\path[
name intersections={of=rbow and a2, by=idr},
name intersections={of=rbow and a4, by=iur},
name intersections={of=lbow and a1, by=idl},
name intersections={of=lbow and a3, by=iul},
](0,0);
\path[rightbow] 
node[pos=0.3,sloped,minimum size=1cm,inner sep=0](ur){}
node[pos=0.7,sloped,minimum size=1cm,inner sep=0](dr){}
;
\path[leftbow]
node[pos=0.3,sloped,minimum size=1cm,inner sep=0](dl){}
node[pos=0.7,sloped,minimum size=1cm,inner sep=0](ul){}
;
\begin{scope}
\clip[rightbow]arc(-\angle:-180+\angle:\Radius)[leftbow]arc(180-\angle:\angle:\Radius);
\clip
[arc2]-|(-\Radius,-\Radius)--(\Radius,-\Radius)|-cycle
[arc3]-|(-\Radius, \Radius)--(\Radius, \Radius);
\clip[yslant=\ysl,xscale=\xsc][rightbow]to+(-2,0)to[bend left=120,looseness=10]cycle;
\clip(iur)--(iul)--(0,0)-|(\Radius,-\Radius)-|(-\Radius, \Radius)--(\Radius,\Radius)|-cycle;
\path[inner color=FarbeA!66!black,outer color=white,path fading=fadeout](0,0)circle(\Radius);
\end{scope}
\begin{scope}
\clip[yslant=\ysl,xscale=\xsc][leftbow]--(iur)--(iul)--cycle;
\path[outer color=FarbeA!66!black!10, inner color=FarbeA!66!black!20][arc3][arc4];
\end{scope}
\begin{scope}
\clip[arc1]-|(-\Radius,-\Radius)--(\Radius,-\Radius)|-cycle;
\draw[yslant=\ysl,xscale=\xsc][leftbow];
\end{scope}
\begin{scope}
\clip[arc2]-|(-\Radius,-\Radius)--(\Radius,-\Radius)|-cycle;
\draw[leftbow];
\draw[yslant=\ysl,xscale=\xsc][rightbow];
\end{scope}
\begin{scope}
\clip[arc3]-|(-\Radius, \Radius)--(\Radius, \Radius)|-cycle;
\draw[leftbow];
\draw[yslant=\ysl,xscale=\xsc][leftbow];
\end{scope}
\begin{scope}
\clip[arc4]-|(-\Radius, \Radius)--(\Radius, \Radius)|-cycle;
\draw[yslant=\ysl,xscale=\xsc][rightbow];
\end{scope}
\begin{scope}
\clip(idl)--(idr)--(\Radius,\Radius)-|(-\Radius,-\Radius)--cycle;
\shade[outer color=FarbeA!66!black!10, inner color=FarbeA!66!black!20][arc1][arc2];
\end{scope}
\begin{scope}
\clip[yslant=\ysl,xscale=\xsc][leftbow]to[bend right=90]cycle;
\draw[arc1][arc2][arc3];
\draw[dotted][arc4];
\end{scope}
\begin{scope}
\clip[yslant=\ysl,xscale=\xsc][leftbow]--(\angle:\Radius)[rightbow]--(\angle:-\Radius);
\draw[arc2][arc4];
\end{scope}
\draw(idl)--(idr)(iul)--(iur); 
\draw
({-\Radius},0)node[anchor=base]{$\leadsto$}
({-0.9*cos(\angle)*\Radius},0)node[anchor=base]{$\tilde\Sigma$};
\draw[yslant=\ysl,xscale=\xsc,black!50]
(-1,-0.8)to[bend right=7]
(-1, 0.8)to[bend right=1]
( 1, 0.8)to[bend left =7]
( 1,-0.8)to[bend left =1]cycle
({cos(\angle)*\Radius},0)node[yslant=\ysl,xscale=0.66,anchor=base]{$\partial M$};
\pgfresetboundingbox
\useasboundingbox[xscale=0.8](0,0)circle(\Radius);
\end{tikzpicture}
\caption{Left pair: Cutting away a neck. Right pair: Cutting away a half-neck.}%
\label{fig:neck}%
\end{figure}

\begin{lemma}\label{lem:orientable-number}
Orientability is preserved under surgery. 
By cutting away a neck or a half-neck, 
\begin{itemize}[nosep]
\item the number of connected components can increase at most by one; 
\item the number of orientable connected components can increase at most by one;
\item the number of nonorientable connected components can increase at most by one. 
\end{itemize}
\end{lemma}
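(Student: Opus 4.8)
The plan is to reduce the statement to one elementary topological operation plus a short case analysis. In Definition~\ref{defn:surgery} the step of passing to a union of some connected components obviously preserves orientability and can only decrease each of the three counts, so it suffices to prove orientability preservation and the three inequalities for a single cut of a neck or of a half-neck; the assertion for general surgery then follows by induction on the number $N$ of steps.

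Next I would record two topological normal forms, read off from the definitions of the two cuts (cf. Figure~\ref{fig:neck} and \cite[Definition~2.2]{DeLellisPellandini2010}). If $\tilde\Sigma$ is obtained from $\Sigma$ by cutting away a neck, then $\tilde\Sigma$ is homeomorphic to the surface obtained from $\Sigma$ by deleting an open annular neighbourhood of a two-sided simple closed curve $\gamma$ lying in the interior of a single connected component $\Sigma_0\subseteq\Sigma$ and then capping the one or two resulting new boundary circles with discs; the hypothesis that $\overline{\Sigma\bigtriangleup\tilde\Sigma}$ is a topological sphere in the interior of $M$ is exactly what forces $\gamma$ to be interior and two-sided. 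Likewise, if $\tilde\Sigma$ is obtained from $\Sigma$ by cutting away a half-neck, then $\tilde\Sigma$ is homeomorphic to the surface obtained from $\Sigma$ by cutting along a properly embedded arc $\delta$ in a single connected component $\Sigma_0\subseteq\Sigma$ (equivalently, deleting an open rectangular neighbourhood of $\delta$ and straightening the two new boundary arcs with half-discs); the hypothesis that $\overline{\Sigma\bigtriangleup\tilde\Sigma}$ is a properly embedded topological disc makes $\delta$ a properly embedded arc. In both situations every connected component of $\Sigma$ other than $\Sigma_0$ reappears unchanged in $\tilde\Sigma$, so only $\Sigma_0$ needs to be analysed.

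Then I would use three standard facts: (a) any subsurface of an orientable surface is orientable; (b) a connected sum of two orientable surfaces is orientable; (c) a boundary connected sum (gluing along a single boundary arc) of two orientable surfaces is orientable. If $\gamma$, respectively $\delta$, is non-separating in $\Sigma_0$, cutting keeps $\Sigma_0$ connected, so the number of connected components is unchanged; the capped surface $\hat\Sigma_0$ is orientable whenever $\Sigma_0$ is, by (a), and may be orientable or nonorientable when $\Sigma_0$ is nonorientable, so the number of orientable components and the number of nonorientable components each change by at most one. If $\gamma$, respectively $\delta$, is separating in $\Sigma_0$, cutting and capping splits $\Sigma_0$ into two components $\hat\Sigma_1,\hat\Sigma_2$ with $\Sigma_0=\hat\Sigma_1\#\hat\Sigma_2$, respectively $\Sigma_0=\hat\Sigma_1\mathbin{\natural}\hat\Sigma_2$; hence the number of connected components goes up by exactly one, and by (b), respectively (c), at most one of $\hat\Sigma_1,\hat\Sigma_2$ is orientable. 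Since $\Sigma_0$ contributes $0$ or $1$ to the nonorientable count while $\{\hat\Sigma_1,\hat\Sigma_2\}$ contributes $1$ or $2$, the nonorientable count increases by at most one, and the same holds for the orientable count. In particular, in every case $\Sigma_0$ orientable forces the pieces to be orientable, giving orientability preservation; reassembling all components and iterating over the $N$ steps finishes the proof.

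I expect the only real difficulty to be bookkeeping: one must verify that the three inequalities hold \emph{simultaneously} in each of the four sub-cases (separating versus non-separating, crossed with $\Sigma_0$ orientable versus nonorientable), observing for instance that a non-separating cut which makes a nonorientable $\Sigma_0$ orientable \emph{lowers} the nonorientable count, which is still consistent with ``increases by at most one''. The one genuinely topological point that warrants care is the justification of the normal forms in the second paragraph straight from Definition~\ref{defn:surgery} — that the deleted region really is a tubular neighbourhood of a two-sided curve, respectively of a properly embedded arc, and that reattaching the prescribed discs or half-discs changes nothing beyond the cutting-and-capping described — and this is precisely where the shape hypotheses on $\overline{\Sigma\bigtriangleup\tilde\Sigma}$ are used.
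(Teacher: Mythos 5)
Your proposal is correct and follows essentially the same route as the paper: both reduce to a single neck or half-neck cut on one connected component, and both rest on the key fact that a surface assembled from two orientable pieces along a connected neck or half-neck is orientable — which the paper proves by extending orientations across the connected strip $\Sigma\setminus(\Sigma_1\cup\Sigma_2)$ and you instead cite as the orientability of (boundary) connected sums. One phrasing slip worth fixing: ``by (b), respectively (c), at most one of $\hat\Sigma_1,\hat\Sigma_2$ is orientable'' is only valid when $\Sigma_0$ is nonorientable (if $\Sigma_0$ is orientable, both pieces are), but your closing remark about checking the four sub-cases separately shows the intended, correct bookkeeping.
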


\begin{proof}
Let $\tilde\Sigma$ be obtained from $\Sigma$ through surgery. 
If $\Sigma$ is orientable, then $\tilde{\Sigma}$ is orientable since any orientation on $\tilde\Sigma\cap\Sigma$ can be extended to the union $\tilde\Sigma\setminus\Sigma$ of discs respectively half-discs.  

In Definition \ref{defn:surgery} it is clear that the number of connected components can increase at most by one when cutting away a neck or a half-neck. 
For the remaining claims, we may consider without loss of generality a connected surface $\Sigma$ 
which is split into the disjoint union of two connected surfaces $\Sigma_1$ and $\Sigma_2$ by cutting away a neck or a half-neck. 
Suppose $\Sigma_1$ and $\Sigma_2$ are both orientable. 
By Definition \ref{defn:surgery}, any chosen orientation on $\Sigma\cap\Sigma_1$ can be extended consistently to $\Sigma\setminus\Sigma_2$. 
Similarly, any orientation on $\Sigma\cap\Sigma_2$ can be extended to $\Sigma\setminus\Sigma_1$. 
Since $\Sigma\setminus(\Sigma_1\cup\Sigma_2)$ is connected by Definition \ref{defn:surgery}, these two orientations either agree or disagree at every point of $\Sigma\setminus(\Sigma_1\cup\Sigma_2)$. 
If they disagree, it suffices to reverse the chosen orientation on $\Sigma\cap\Sigma_1$ in order to obtain an orientation on all of $\Sigma$. 
Therefore, $\Sigma$ is orientable, which shows that the number of orientable connected components can increase at most by one. 
Conversely, if $\Sigma_1$ and $\Sigma_2$ are both nonorientable, then $\Sigma$ is also nonorientable because surgery preserves orientability. 
This completes the proof.  
\end{proof}

The following two lemmata show that the first Betti number and the genus complexity are both nonincreasing under surgery. 
This holds without making any assumptions on the connectivity or orientability of the surfaces in question. 

\begin{lemma}\label{lem:Betti_under_surgery}
Let $\Sigma\subset M$ be any smooth, compact, properly embedded surface and let $\tilde\Sigma$ be obtained from $\Sigma$ through surgery. 
Then, $\beta_1(\tilde\Sigma)\leq\beta_1(\Sigma)$.  
\end{lemma}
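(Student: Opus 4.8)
The plan is to reduce the statement to a single elementary surgery step and then apply Mayer--Vietoris. By Definition~\ref{defn:surgery}\,\ref{defn:surgery-c}, any surgery is a finite composition of the moves ``taking a union of connected components'', ``cutting away a neck'', and ``cutting away a half-neck'', so by transitivity of~$\leq$ it suffices to prove $\beta_1(\tilde\Sigma)\leq\beta_1(\Sigma)$ when $\tilde\Sigma$ arises from $\Sigma$ by a single such move. If $\tilde\Sigma$ is a union of some of the connected components of $\Sigma$, the inequality is immediate since $\beta_1$ is additive over connected components and nonnegative on each of them.

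For the remaining two cases I would first record that both can be written uniformly as $\Sigma=\Sigma_0\cup_K A$ and $\tilde\Sigma=\Sigma_0\cup_K D$, where $\Sigma_0\vcentcolon=\Sigma\cap\tilde\Sigma$ is a compact surface with boundary, $K\vcentcolon=\Sigma_0\cap A=\Sigma_0\cap D$ is the common gluing locus, $A\vcentcolon=\overline{\Sigma\setminus\tilde\Sigma}$ is the removed (half-)neck, and $D\vcentcolon=\overline{\tilde\Sigma\setminus\Sigma}$ consists of the two inserted (half-)discs. Unwinding Definition~\ref{defn:surgery}\,\ref{defn:surgery-a} and~\ref{defn:surgery-b}, the locus $K$ has exactly two connected components (two circles in the neck case, two arcs in the half-neck case), and the two crucial properties of $D$ are: each of its two pieces is contractible, so $H_1(D)=0$; and each piece of $D$ contains exactly one of the two components of $K$, so the inclusion-induced map $\tilde H_0(K)\to\tilde H_0(D)$ is injective.

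The main computation is then carried out with the reduced Mayer--Vietoris sequences of $\Sigma=\Sigma_0\cup_K A$ and $\tilde\Sigma=\Sigma_0\cup_K D$, taken with $\mathbb{Q}$ coefficients so that all groups are finite-dimensional vector spaces and $\beta_1$ equals the dimension in degree one (after passing to open neighbourhoods of $\Sigma_0$, $A$, $D$ so that the relevant triads are excisive). From the portion $H_1(K)\xrightarrow{\alpha}H_1(\Sigma_0)\oplus H_1(A)\to H_1(\Sigma)\to\tilde H_0(K)\xrightarrow{\beta}\tilde H_0(\Sigma_0)\oplus\tilde H_0(A)$ one extracts a short exact sequence $0\to\operatorname{coker}(\alpha)\to H_1(\Sigma)\to\ker(\beta)\to0$, hence $\beta_1(\Sigma)=\dim\operatorname{coker}(\alpha)+\dim\ker(\beta)$, and similarly $\beta_1(\tilde\Sigma)=\dim\operatorname{coker}(\tilde\alpha)+\dim\ker(\tilde\beta)$ for the corresponding maps $\tilde\alpha,\tilde\beta$ of the sequence for $\tilde\Sigma$. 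Because $H_1(D)=0$, the projection onto the first summand identifies $\operatorname{coker}(\tilde\alpha)$ with $H_1(\Sigma_0)/\operatorname{im}\!\bigl(H_1(K)\to H_1(\Sigma_0)\bigr)$; since the $H_1(\Sigma_0)$-component of $\alpha$ coincides with this map, the same projection induces a surjection $\operatorname{coker}(\alpha)\twoheadrightarrow\operatorname{coker}(\tilde\alpha)$, so $\dim\operatorname{coker}(\tilde\alpha)\leq\dim\operatorname{coker}(\alpha)$. On the other hand, injectivity of $\tilde H_0(K)\to\tilde H_0(D)$ gives $\ker(\tilde\beta)=0$. Combining, $\beta_1(\tilde\Sigma)=\dim\operatorname{coker}(\tilde\alpha)\leq\dim\operatorname{coker}(\alpha)\leq\dim\operatorname{coker}(\alpha)+\dim\ker(\beta)=\beta_1(\Sigma)$, as desired, and the general case follows by iterating.

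I expect the only real difficulty to be bookkeeping rather than conceptual: checking carefully from Definition~\ref{defn:surgery} that $\Sigma_0=\Sigma\cap\tilde\Sigma$ is a compact surface with boundary meeting $\partial M$ properly, that $\Sigma$ and $\tilde\Sigma$ are genuinely the pushouts along the common two-component locus $K$ with $A$ and $D$ of the asserted homotopy types and separation behaviour, and that the triads in question are excisive so that Mayer--Vietoris applies. The degenerate case $\Sigma_0=\emptyset$ (where $\Sigma$ is a single annulus or square and $\tilde\Sigma$ a disjoint union of two discs or two half-discs) should be disposed of by inspection, both sides of the inequality being then $0$ or $1$.
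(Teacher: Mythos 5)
Your proof is correct, but it takes a genuinely different route from the paper's. The paper reduces to a single neck or half-neck cut exactly as you do, but then argues purely through the Euler characteristic: it records $\chi(\tilde\Sigma)=\chi(\Sigma)+2$ (neck) or $\chi(\Sigma)+1$ (half-neck), writes $\chi=\beta_0-\beta_1+\beta_2$, and controls the changes in $\beta_0$ and $\beta_2$ via Lemma~\ref{lem:orientable-number} (the number of connected components, and of orientable closed components, each increases by at most one under a single cut; for a half-neck $\beta_2$ cannot change at all since the affected components have boundary). Your Mayer--Vietoris computation replaces that input entirely: the decompositions $\Sigma=\Sigma_0\cup_K A$ and $\tilde\Sigma=\Sigma_0\cup_K D$, together with the two facts $H_1(D)=0$ and injectivity of $\tilde H_0(K)\to\tilde H_0(D)$, yield the surjection $\operatorname{coker}(\alpha)\twoheadrightarrow\operatorname{coker}(\tilde\alpha)$ and $\ker(\tilde\beta)=0$, and hence the inequality, without any appeal to orientability or to a separate count of components. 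What the paper's route buys is brevity, since Lemma~\ref{lem:orientable-number} is needed elsewhere anyway (in Lemmata~\ref{lem:gsum_under_surgery} and~\ref{lem:orientable_surgery}); what yours buys is self-containedness and a cleaner conceptual explanation of \emph{why} $\beta_1$ drops (capping by contractible pieces kills $H_1(A)$ and cannot create new connecting classes). Your cautionary remarks are appropriately placed but mostly vacuous in practice: the triads are excisive after the standard collar thickening, and the degenerate case $\Sigma_0=\emptyset$ cannot actually occur, since $\Sigma\setminus\tilde\Sigma$ is an \emph{open} (half-)neck and $\Sigma$ is compact, so $\Sigma\cap\tilde\Sigma$ always contains the closure's boundary curves.
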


\begin{proof}
Since the first Betti number is additive with respect to taking unions of connected components, it suffices to consider the case when $\tilde\Sigma$ is obtained from $\Sigma$ by cutting away a neck or a half-neck. 
The corresponding Euler characteristics then satisfy 
\begin{align}\label{eqn:EulerCharacteristic1}
\chi(\tilde\Sigma)
&=\begin{cases}
\chi(\Sigma)+2 
&\text{ if $\tilde\Sigma$ is obtained from $\Sigma$ by cutting away a neck, }
\\
\chi(\Sigma)+1  
&\text{ if $\tilde\Sigma$ is obtained from $\Sigma$ by cutting away a half-neck, }
\end{cases}
\end{align}
which follows from the fact that $\chi(X)=\chi(U)+\chi(V)-\chi(U\cap V)$ for any pair $U,V$ of open subsets covering a topological space $X$. 
We also recall (e.\,g. from \cite[Theorem 2.44]{Hatcher2002}) that the Euler characteristic coincides with the alternating sum of the Betti numbers, that is 
\begin{align}\label{eqn:EulerCharacteristic2}
\chi(\Sigma)&=\beta_0(\Sigma)-\beta_1(\Sigma)+\beta_2(\Sigma), & 
\chi(\tilde\Sigma)&=\beta_0(\tilde\Sigma)-\beta_1(\tilde\Sigma)+\beta_2(\tilde\Sigma).
\end{align}
In the case when $\tilde\Sigma$ is obtained from $\Sigma$ by cutting away a neck, 
equations \eqref{eqn:EulerCharacteristic1} and \eqref{eqn:EulerCharacteristic2} imply 
\begin{align}\label{eqn:20230620-1}
  \beta_1(\tilde\Sigma)-\beta_1(\Sigma)
&=\beta_0(\tilde\Sigma)-\beta_0(\Sigma)
 +\beta_2(\tilde\Sigma)-\beta_2(\Sigma)-2. 
\end{align}
Lemma \ref{lem:orientable-number} implies $\beta_0(\tilde\Sigma)-\beta_0(\Sigma)\leq1$ and $\beta_2(\tilde\Sigma)-\beta_2(\Sigma)\leq1$, because for the surfaces in question, the second Betti number coincides with the number of their orientable connected components without boundary.
Thus, \eqref{eqn:20230620-1} yields $\beta_1(\tilde\Sigma)-\beta_1(\Sigma)\leq0$.

In the case when $\tilde\Sigma$ is obtained from $\Sigma$ by cutting away a half-neck, equations \eqref{eqn:EulerCharacteristic1} and \eqref{eqn:EulerCharacteristic2} imply 
\begin{align}\label{eqn:20230620-2}
  \beta_1(\tilde\Sigma)-\beta_1(\Sigma)
&=\beta_0(\tilde\Sigma)-\beta_0(\Sigma)
 +\beta_2(\tilde\Sigma)-\beta_2(\Sigma)-1. 
\end{align}
Lemma \ref{lem:orientable-number} again implies $\beta_0(\tilde\Sigma)-\beta_0(\Sigma)\leq1$. 
Moreover, $\beta_2(\tilde\Sigma)-\beta_2(\Sigma)=0$ because the connected components of $\Sigma$ and $\tilde\Sigma$ which are affected by the half-neck surgery necessarily have nonempty boundary. 
Thus, \eqref{eqn:20230620-2} yields $\beta_1(\tilde\Sigma)-\beta_1(\Sigma)\leq0$ and the claim follows. 
\end{proof}

\begin{lemma}\label{lem:gsum_under_surgery}
Let $\Sigma\subset M$ be any smooth, compact, properly embedded surface and let $\tilde\Sigma$ be obtained from $\Sigma$ through surgery. 
Then, $\gsum(\tilde\Sigma)\leq\gsum(\Sigma)$.  
\end{lemma}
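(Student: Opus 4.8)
The plan is to deduce the statement from an algebraic identity expressing twice the genus complexity in terms of standard topological invariants. By the classification of compact surfaces, a connected orientable component $\hat\Sigma$ of genus $g$ with $b$ boundary circles satisfies $\chi(\hat\Sigma)=2-2g-b$, while a connected nonorientable component of genus $k$ with $b$ boundary circles satisfies $\chi(\hat\Sigma)=2-k-b$; in either case the contribution of $\hat\Sigma$ to $\gsum$ — which is $g$ if $\hat\Sigma$ is orientable and $\tfrac12(k-1)$ if it is nonorientable — equals $\tfrac12\bigl(c-\beta_0(\partial\hat\Sigma)-\chi(\hat\Sigma)\bigr)$, where $c=2$ in the orientable and $c=1$ in the nonorientable case. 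Summing over all connected components of $\Sigma$ and writing $o(\Sigma)$ for the number of orientable ones, this yields
\begin{equation}\label{eqn:gsum-identity}
2\gsum(\Sigma)=\beta_0(\Sigma)+o(\Sigma)-\beta_0(\partial\Sigma)-\chi(\Sigma).
\end{equation}
As all four terms on the right are additive under disjoint unions and $\gsum$ is nonincreasing under discarding connected components (Remark \ref{rem:complexities}), it suffices to establish $\gsum(\tilde\Sigma)\le\gsum(\Sigma)$ when $\tilde\Sigma$ is obtained from $\Sigma$ by cutting away a single neck or a single half-neck.

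For a neck, $\chi(\tilde\Sigma)=\chi(\Sigma)+2$ by \eqref{eqn:EulerCharacteristic1} and $\beta_0(\partial\tilde\Sigma)=\beta_0(\partial\Sigma)$ because the surgery takes place in the interior of $M$ by Definition \ref{defn:surgery}\,\ref{defn:surgery-a}. Since $\beta_0(\tilde\Sigma)-\beta_0(\Sigma)\le1$ and $o(\tilde\Sigma)-o(\Sigma)\le1$ by Lemma \ref{lem:orientable-number}, identity \eqref{eqn:gsum-identity} immediately gives $2\gsum(\tilde\Sigma)-2\gsum(\Sigma)=\bigl(\beta_0(\tilde\Sigma)-\beta_0(\Sigma)\bigr)+\bigl(o(\tilde\Sigma)-o(\Sigma)\bigr)-2\le0$.

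For a half-neck, $\chi(\tilde\Sigma)=\chi(\Sigma)+1$ by \eqref{eqn:EulerCharacteristic1}, and exactly one connected component $\hat\Sigma$ of $\Sigma$ is affected, necessarily with nonempty boundary. By \eqref{eqn:gsum-identity}, the inequality $\gsum(\tilde\Sigma)\le\gsum(\Sigma)$ is equivalent to
\[
\bigl(\beta_0(\tilde\Sigma)-\beta_0(\Sigma)\bigr)+\bigl(o(\tilde\Sigma)-o(\Sigma)\bigr)-\bigl(\beta_0(\partial\tilde\Sigma)-\beta_0(\partial\Sigma)\bigr)\le1 ,
\]
and I would split into two cases. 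If $\hat\Sigma$ is split by the surgery into two components, then $\beta_0$ increases by one; reversing the surgery is a boundary connected sum of the two pieces along an arc, which identifies one boundary circle of each into one, so $\beta_0(\partial\,\cdot\,)$ also increases by one, and the inequality collapses to $o(\tilde\Sigma)-o(\Sigma)\le1$, i.e.\ Lemma \ref{lem:orientable-number}. If $\hat\Sigma$ remains connected, then $\beta_0(\tilde\Sigma)=\beta_0(\Sigma)$ and, as surgery never makes an orientable surface nonorientable (Lemma \ref{lem:orientable-number}), we have $o(\tilde\Sigma)-o(\Sigma)\in\{0,1\}$. When the orientability type is preserved, $o(\tilde\Sigma)-o(\Sigma)=0$ and the inequality follows from $\beta_0(\partial\tilde\Sigma)-\beta_0(\partial\Sigma)\ge-1$, which holds because the reverse surgery attaches a single band along the boundary and hence changes the number of boundary circles by at most one. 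The remaining case — where $\hat\Sigma$ is nonorientable but becomes orientable — is the delicate one: here $o(\tilde\Sigma)-o(\Sigma)=1$ and the inequality requires $\beta_0(\partial\tilde\Sigma)\ge\beta_0(\partial\Sigma)$.

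This last step is the main obstacle. In this situation the nonorientable surface $\hat\Sigma$ is obtained from the orientable component of $\tilde\Sigma$ by attaching a single band along two boundary arcs. I would use the elementary fact that attaching a band to an orientable surface along two arcs of one boundary circle $C$ yields an orientable surface when $C$ splits into two circles and a nonorientable surface otherwise, whereas a band joining arcs on two distinct boundary circles merges those circles into one; thus a band attachment whose result is nonorientable never raises the number of boundary circles. This gives $\beta_0(\partial\Sigma)\le\beta_0(\partial\tilde\Sigma)$ and finishes the proof. The rest is bookkeeping with the identity \eqref{eqn:gsum-identity}, the Euler-characteristic jumps \eqref{eqn:EulerCharacteristic1}, and the combinatorial bounds of Lemma \ref{lem:orientable-number}; the genuinely geometric input is confined to how orientability and the number of boundary circles interact under a half-neck move.
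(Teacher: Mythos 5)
Your proof is correct and follows essentially the same route as the paper: your identity $2\gsum(\Sigma)=\beta_0(\Sigma)+o(\Sigma)-\beta_0(\partial\Sigma)-\chi(\Sigma)$ is a rearrangement of Corollary~\ref{cor:Eulerabc} (since $2c_{\mathcal O}+c_{\mathcal N}=\beta_0(\Sigma)+o(\Sigma)$), the reduction to a single neck or half-neck and the use of Lemma~\ref{lem:orientable-number} are identical, and you isolate exactly the same delicate case, namely a connected nonorientable component becoming orientable when the half-neck joins two distinct boundary circles. The only divergence is how that case is settled --- the paper reroutes an orientation-reversing loop through a collar of one boundary circle so that it avoids the half-neck, whereas you invoke the classification of band attachments to an orientable surface --- and both arguments are valid and of comparable weight.
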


\begin{proof} 
Recalling Remark \ref{rem:complexities}, it again suffices to consider the case when $\tilde\Sigma$ is obtained from $\Sigma$ by cutting away a neck or a half-neck. 
Let us define the short-hand notation $\gsum=\gsum(\Sigma)$,  
\begin{align*}
b&=\text{ number of boundary components of $\Sigma$,}	\\ 
c_{\mathcal{O}}&=\text{ number of orientable connected components of $\Sigma$,}	\\ 
c_{\mathcal{N}}&=\text{ number of nonorientable connected components of $\Sigma$,}
\end{align*}
and, analogously, $\tilde\gsum$, $\tilde b$, $\tilde c_{\mathcal{O}}$, $\tilde c_{\mathcal{N}}$ for the surface $\tilde{\Sigma}$ after surgery. 
By Corollary~\ref{cor:Eulerabc}, the Euler characteristics of $\Sigma$ and $\tilde\Sigma$ are given by 
\begin{align}\label{eqn:EulerCharacteristic3}
\chi(\Sigma)&=2c_\mathcal{O}+c_\mathcal{N}-2\gsum-b, & 
\chi(\tilde\Sigma)&=2\tilde c_\mathcal{O}+\tilde c_\mathcal{N}-2\tilde\gsum-\tilde b. 
\end{align}
Moreover, Lemma \ref{lem:orientable-number} implies  
\begin{align}\label{eqn:20230618-1}
(\tilde c_{\mathcal{O}}+\tilde c_{\mathcal{N}})	- (c_{\mathcal{O}}+c_{\mathcal{N}})&\in\{0,1\}, &
\tilde c_{\mathcal{O}}-c_{\mathcal{O}}&\in\{0,1\}. 
\end{align} 
In the case when $\tilde\Sigma$ is obtained from $\Sigma$ by cutting away a neck, the boundary of $\Sigma$ is not modified. 
Hence we have $\tilde{b}=b$. 
Recalling $\chi(\tilde\Sigma)=\chi(\Sigma)+2$ from \eqref{eqn:EulerCharacteristic1} and combining this identity with \eqref{eqn:EulerCharacteristic3} and \eqref{eqn:20230618-1}, we obtain 
\begin{align}\label{eqn:2g-neckcase}
2\tilde\gsum-2\gsum&=2(\tilde c_{\mathcal{O}}-c_{\mathcal{O}})+(\tilde c_{\mathcal{N}}-c_{\mathcal{N}})-2	 
\leq (\tilde c_{\mathcal{O}}-c_{\mathcal{O}})-1\leq0. 
\intertext{In the case when $\tilde\Sigma$ is obtained from $\Sigma$ by cutting away a half-neck, we have $\chi(\tilde\Sigma)=\chi(\Sigma)+1$ by \eqref{eqn:EulerCharacteristic1} and \eqref{eqn:EulerCharacteristic3} yields  
}
\label{eqn:2g-halfneckcase}
2\tilde\gsum-2\gsum&=2(\tilde c_{\mathcal{O}}-c_{\mathcal{O}})+(\tilde c_{\mathcal{N}}-c_{\mathcal{N}})-(\tilde b-b)-1.
\end{align}
We distinguish the following cases.
\begin{enumerate}[label={(\alph*)}]
\item\label{half-neck-case1} The two boundary segments which are affected by the surgery belong to the same boundary component of $\Sigma$, in which case we have one of the following:
\begin{enumerate}[label={(a\oldstylenums{\arabic*})}]
\item\label{case1a} $\tilde{b}=b$. 
Then necessarily $\tilde c_{\mathcal{O}}+\tilde c_{\mathcal{N}}=c_{\mathcal{O}}+c_{\mathcal{N}}$ and \eqref{eqn:20230618-1} and \eqref{eqn:2g-halfneckcase} imply $\tilde\gsum\leq\gsum$. 
\item $\tilde{b}=b+1$. Then \eqref{eqn:20230618-1} and \eqref{eqn:2g-halfneckcase} imply the same estimate as in \eqref{eqn:2g-neckcase}. 
\end{enumerate}
\item\label{half-neck-case2} The two boundary segments which are affected by the surgery belong to two different boundary components of $\Sigma$, in which case we have $\tilde{b}=b-1$ and $(\tilde c_{\mathcal{O}}+\tilde c_{\mathcal{N}})=(c_{\mathcal{O}}+c_{\mathcal{N}})$.  
Then \eqref{eqn:2g-halfneckcase} implies $2\tilde\gsum-2\gsum=\tilde c_{\mathcal{O}}-c_{\mathcal{O}}$. Below we prove that in this case, we indeed have $\tilde c_{\mathcal{O}}-c_{\mathcal{O}}=0$ which completes the proof. 
\end{enumerate}
Let $\Sigma$ and $\tilde\Sigma$ be as in case \ref{half-neck-case2}. 
Then $\Sigma$ has at least two boundary components $\gamma_1$ and $\gamma_2$ forming a half-neck $N\vcentcolon=\Sigma\setminus\tilde\Sigma$ between them.  
We claim that if $\Sigma$ is nonorientable, then $\tilde\Sigma$ is nonorientable, too. 
Without loss of generality we may assume that $\Sigma$ and $\tilde\Sigma$ are both connected because cutting away a half-neck between two different boundary components does not disconnect a surface. 
Any neighbourhood $U$ of $\gamma_1$ in $\Sigma$ which is homeomorphic to $\gamma_1\times\Interval{0,1}$ is orientable. 
Moreover, the intersection $U\cap N$ is connected since $\gamma_1$ and $\gamma_2$ are disjoint. 
In particular, any chosen orientation $O$ on $U$ can be extended to a neighbourhood of the half-neck $N$.  
This implies that any orientation-reversing path in $\Sigma$ passing through $N$ can be modified such that it instead passes through $U\setminus N$ and is still orientation-reversing. 
Consequently $\tilde\Sigma$ is nonorientable as claimed. 
\end{proof}

\begin{remark}
Case \ref{case1a} in the proof of Lemma \ref{lem:gsum_under_surgery} occurs for example during half-neck surgery on a connected component which is homeomorphic to a M\"obius band, in which case we have $(\tilde c_{\mathcal{O}},\tilde c_{\mathcal{N}})=(c_{\mathcal{O}}+1,c_{\mathcal{N}}-1)$ and $\tilde\gsum=\gsum=0$. 
\end{remark}

\begin{lemma}\label{lem:orientable_surgery}
Let $\Sigma\subset M$ be any orientable, smooth, compact, properly embedded surface and let $\tilde\Sigma$ be obtained from $\Sigma$ through surgery. 
Then, $\tilde\Sigma$ is orientable and 
\begin{align*}
\bsum(\tilde\Sigma)+\gsum(\tilde\Sigma)&\leq\bsum(\Sigma)+\gsum(\Sigma).  
\end{align*}
\end{lemma}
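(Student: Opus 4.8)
The plan is to reduce to a single surgery step and then to reformulate the inequality via the first Betti number. Write $\Delta f\vcentcolon= f(\tilde\Sigma)-f(\Sigma)$ for any topological quantity $f$. Since $\bsum$ and $\gsum$ are nonnegative and additive over connected components (Remark~\ref{rem:complexities}), passing to a union of some connected components can only decrease $\bsum+\gsum$ and obviously preserves orientability; hence, arguing by induction on the length of the surgery sequence exactly as in the proofs of Lemmata~\ref{lem:Betti_under_surgery} and~\ref{lem:gsum_under_surgery}, it suffices to treat the case where $\tilde\Sigma$ is obtained from $\Sigma$ by cutting away one neck or one half-neck (the intermediate surfaces being orientable at every stage by Lemma~\ref{lem:orientable-number}, so that this lemma applies at each step). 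In particular, orientability of $\tilde\Sigma$ follows from Lemma~\ref{lem:orientable-number}.

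As $\Sigma$ and $\tilde\Sigma$ are orientable, Corollary~\ref{cor:beta1=2g+b+c} specializes to $\beta_1=2\gsum+\bsum$ for each of them (the count of nonorientable boundary components being zero), so that $\bsum+\gsum=\beta_1-\gsum$ and the claim becomes $\Delta\beta_1\leq\Delta\gsum$. Both sides are already known to be nonpositive by Lemmata~\ref{lem:Betti_under_surgery} and~\ref{lem:gsum_under_surgery}; the task is to compare them, which I would do by combining the identities established in those two proofs, using that for an orientable surface the number of connected components equals $c_{\mathcal O}$ (so $c_{\mathcal N}=0$ and $c_{\mathcal O}=\beta_0$) and that $\beta_2$ counts the closed components.

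In the neck case, \eqref{eqn:20230620-1} reads $\Delta\beta_1=\Delta\beta_0+\Delta\beta_2-2$, while \eqref{eqn:2g-neckcase} with $c_{\mathcal N}=\tilde c_{\mathcal N}=0$ gives $\Delta\gsum=\Delta\beta_0-1$; subtracting, $\Delta\beta_1-\Delta\gsum=\Delta\beta_2-1\leq0$, since $\Delta\beta_2\leq1$ by Lemma~\ref{lem:orientable-number}. In the half-neck case, $\Delta\beta_2=0$ because the components affected by the surgery have nonempty boundary; hence \eqref{eqn:20230620-2} gives $\Delta\beta_1=\Delta\beta_0-1$, whereas \eqref{eqn:2g-halfneckcase} with $c_{\mathcal N}=\tilde c_{\mathcal N}=0$ gives $2\Delta\gsum=2\Delta\beta_0-\Delta b-1$; subtracting, $\Delta\beta_1-\Delta\gsum=\frac{1}{2}(\Delta b-1)\leq0$ because cutting a half-neck changes the number of boundary components by at most one, i.e.\ $\Delta b\leq1$ in each of the subcases~\ref{case1a}, $\tilde b=b+1$, and~\ref{half-neck-case2} distinguished in the proof of Lemma~\ref{lem:gsum_under_surgery}. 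In both cases $\Delta(\bsum+\gsum)=\Delta\beta_1-\Delta\gsum\leq0$, which is the assertion.

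I do not anticipate a deep obstacle: the argument is essentially a repackaging of Lemmata~\ref{lem:orientable-number}--\ref{lem:gsum_under_surgery}. The points that require care are the legitimacy of the reduction to a single surgery for the combined quantity $\bsum+\gsum$ (which rests on additivity and nonnegativity together with the fact that orientability descends through every intermediate surface), the correct specialization of the identities \eqref{eqn:20230620-1}--\eqref{eqn:2g-halfneckcase} to the orientable setting, and the elementary observation $\Delta b\leq1$ for half-neck surgery that is already implicit in the case distinction of Lemma~\ref{lem:gsum_under_surgery}.
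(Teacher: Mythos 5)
Your proof is correct and follows essentially the same route as the paper's: reduce to a single neck or half-neck cut, then do Euler-characteristic bookkeeping using Lemma~\ref{lem:orientable-number} and the bound $\tilde b-b\leq1$; in particular your half-neck computation $\Delta\beta_1-\Delta\gsum=\tfrac12(\Delta b-1)$ is, via $\beta_1=2\gsum+\bsum$, literally the paper's identity $2\Delta(\bsum+\gsum)=\Delta b-1$. The only cosmetic difference is that you route everything through $\beta_1$ and the identities from Lemma~\ref{lem:Betti_under_surgery}, whereas the paper combines \eqref{eqn:20230620-2gsum} and \eqref{eqn:20230620-bsum} directly and, in the neck case, just observes $\tilde\bsum\leq\bsum$ outright.
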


\begin{proof}
As stated in Lemma \ref{lem:orientable-number}, orientability is preserved under surgery. 
By Remark \ref{rem:complexities}, it suffices to consider the case when $\tilde\Sigma$ is obtained from $\Sigma$ by cutting away a neck or a half-neck.  
In analogy with the proof of Lemma \ref{lem:gsum_under_surgery} we employ the short-hand notation $\gsum=\gsum(\Sigma)$, $\bsum=\bsum(\Sigma)$,  
\begin{align*}
b&=\text{ number of boundary components of $\Sigma$,}	\\ 
c&=\text{ number of connected components of $\Sigma$,}	 
\end{align*}
and, similarly, $\tilde\gsum$, $\tilde\bsum$, $\tilde b$, $\tilde c$ for the surface $\tilde{\Sigma}$ after surgery. 
In the case when $\tilde\Sigma$ is obtained from $\Sigma$ by cutting away a neck, we have $\tilde{b}=b$ because the boundary is not modified. 
If cutting away the neck adds a connected component with nonempty boundary, $\tilde\bsum=\bsum-1$ and otherwise $\tilde\bsum=\bsum$. 
In any case, $\tilde\bsum\leq\bsum$ and the claim follows directly recalling Lemma \ref{lem:gsum_under_surgery}. 

In the case when $\tilde\Sigma$ is obtained from $\Sigma$ by cutting away a half-neck, we have $\chi(\tilde\Sigma)=\chi(\Sigma)+1$ by \eqref{eqn:EulerCharacteristic1}. 
In the orientable case, Corollary~\ref{cor:Eulerabc} implies $\chi(\Sigma)=2c-2\gsum-b$ and we obtain  
\begin{align}\label{eqn:20230620-2gsum}
2\tilde\gsum-2\gsum&=2\tilde c-2c-(\tilde b-b) -1.
\shortintertext{By Definition \ref{defn:abc}, we have }
\label{eqn:20230620-bsum}
\tilde\bsum-\bsum&=(\tilde b-b)-(\tilde c-c).
\intertext{Multiplying equation \eqref{eqn:20230620-bsum} by two and adding it to \eqref{eqn:20230620-2gsum} yields}
\notag
2(\tilde\bsum+\tilde\gsum)-2(\bsum+\gsum)&=(\tilde b-b)-1.
\end{align}
Since the number of boundary components can increase at most by one when cutting away a half-neck, the claim follows. 
\end{proof}

The following proposition for surfaces with boundary is analogous to \cite[Proposition~2.3]{DeLellisPellandini2010} in the closed case. 
Roughly speaking, it states that, after suitable surgery, a sequence converging in the sense of varifolds to a limit $\Gamma$ is contained in the tubular neighbourhood $U_{2\varepsilon}\Gamma$, as defined in equation \eqref{eqn:Ueps}. 
The proof is similar to the arguments for \cite[Proposition 2.3]{DeLellisPellandini2010} and for \cite[Lemma~4.4]{CarlottoFranzSchulz2022}. 
The main difference here is the presence of the group $G$ and the possibility for half-neck surgeries.
 
\begin{proposition}\label{prop:PerformingSurgery}
Let $\{\Sigma^j\}_{j\in\N}$ be a sequence of smooth, compact, 
$G$-equivariant 
surfaces which are properly embedded in $M$ and converge in the sense of varifolds to $\Gamma = \sum_{i=1}^\compgamma m_i\Gamma_i$, where the varifolds $\Gamma_1,\ldots,\Gamma_\compgamma$ are induced by smooth, connected, pairwise disjoint surfaces.
Then, for every sufficiently small $\varepsilon>0$, there exists $J_\varepsilon\in\N$ such that for all $j\geq J_\varepsilon$ there is a 
$G$-equivariant 
surface $\tilde\Sigma^j$ obtained from $\Sigma^j$ through surgery and satisfying 
\begin{align*}
\tilde\Sigma^j&\subset U_{2\varepsilon}\Gamma, &
\tilde\Sigma^j\cap U_\varepsilon\Gamma&=\Sigma^j\cap U_\varepsilon\Gamma.
\end{align*} 
\end{proposition}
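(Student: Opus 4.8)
The plan is to remove, by a finite sequence of neck and half-neck surgeries followed by discarding whole connected components, the portion of $\Sigma^j$ that lies outside $U_\varepsilon\Gamma$, while not touching $\Sigma^j$ inside $U_\varepsilon\Gamma$ and keeping the result inside $U_{2\varepsilon}\Gamma$. This follows the blueprint of \cite{DeLellisPellandini2010}*{Proposition~2.3} and \cite{CarlottoFranzSchulz2022}*{Lemma~4.4}, the genuinely new features being the equivariance under $G$ and the appearance of half-neck surgeries near $\partial M$. First I would fix a threshold $\varepsilon_0>0$ small enough that: the neighbourhoods $U_{2\varepsilon_0}\Gamma_i$ are pairwise disjoint; each of them is a (possibly twisted) interval bundle over $\Gamma_i$ on which $\dist(\cdot,\Gamma_i)$ is smooth and, thanks to Fermi coordinates along $\partial\Gamma_i$ (where $\Gamma_i$ meets $\partial M$ orthogonally), the level set $\{\dist(\cdot,\Gamma_i)=\rho\}$ is a smooth, properly embedded surface for every $\rho\in(0,2\varepsilon_0)$; and, finally, $\varepsilon_0$ is small compared with the intrinsic geometry of the $\Gamma_i$, so that any closed curve of length less than $4\varepsilon_0$ contained in a connected component of $\{\dist(\cdot,\Gamma_i)=\rho\}$ bounds a small embedded disc therein, and any embedded arc of length less than $4\varepsilon_0$ with endpoints on $\partial M$ contained in such a component bounds, together with an arc of $\partial M$, a small properly embedded half-disc therein. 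Because $\dist(\cdot,\spt\Gamma)$ is $G$-invariant, all these neighbourhoods and level sets are $G$-invariant.

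Next, fix $\varepsilon\in(0,\varepsilon_0)$ and write $d:=\dist(\cdot,\spt\Gamma)$. Since $\spt\Gamma\subset\{d=0\}$ and $\{\varepsilon\le d\le 2\varepsilon\}$ is compact, the varifold convergence $\Sigma^j\to\Gamma$ gives $\hsd^2(\Sigma^j\cap\{\varepsilon\le d\le 2\varepsilon\})\to\nm{\Gamma}(\{\varepsilon\le d\le 2\varepsilon\})=0$. As $d$ is $1$-Lipschitz, the coarea formula yields $\int_\varepsilon^{2\varepsilon}\hsd^1(\Sigma^j\cap\{d=\rho\})\,d\rho\le\hsd^2(\Sigma^j\cap\{\varepsilon\le d\le 2\varepsilon\})$, so I would take $J_\varepsilon$ large enough that the right-hand side is less than $4\varepsilon\varepsilon_0$ for all $j\ge J_\varepsilon$. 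For each such $j$, combining Chebyshev's inequality with Sard's theorem allows me to choose $\rho=\rho_j\in(\varepsilon,2\varepsilon)$ which is a regular value of $d$ restricted to $\Sigma^j$, to $\partial\Sigma^j$ and to $\partial M$, and for which $\hsd^1(\Sigma^j\cap\{d=\rho\})<4\varepsilon_0$; no equivariance is lost since $\rho$ is merely a number. The slice $C^j:=\Sigma^j\cap\{d=\rho\}$ is then a $G$-invariant compact $1$-manifold whose components are, by proper embeddedness of $\Sigma^j$ and the choice of $\rho$, either circles in the interior of $M$ or embedded arcs with both endpoints on $\partial M$; each component lies in a unique $U_{2\varepsilon}\Gamma_i$, on the smooth surface $\{d=\rho\}$, and has length less than $4\varepsilon_0$.

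By the choice of $\varepsilon_0$, every circle component of $C^j$ therefore bounds a small embedded disc on $\{d=\rho\}$, and every arc component bounds, together with an arc of $\{d=\rho\}\cap\partial M$, a small properly embedded half-disc on $\{d=\rho\}$. Processing the $G$-orbits of these components from innermost to outermost, and making the choice of spanning discs and half-discs canonical (e.g. as relative area minimizers within $\{d=\rho\}$, which the setwise stabiliser of a component necessarily preserves), I obtain pairwise disjoint caps which, after being pushed slightly off $\{d=\rho\}$ while remaining in $\{\varepsilon<d<2\varepsilon\}$, meet $\Sigma^j$ only along $C^j$. For each circle component $\gamma$ I now delete a thin ($G$-equivariant) annular neighbourhood of $\gamma$ in $\Sigma^j$ and glue in two such discs spanning the two resulting boundary circles: by Definition~\ref{defn:surgery}\ref{defn:surgery-a} this is cutting away a neck. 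For each arc component I perform the analogous operation with two half-discs, which by Definition~\ref{defn:surgery}\ref{defn:surgery-b} is cutting away a half-neck. After all these moves, the surface is the disjoint union of $\Sigma^j\cap\{d\le\rho\}$ together with its caps and of finitely many connected components contained in $\{d>\varepsilon\}$; discarding the latter (allowed by Definition~\ref{defn:surgery}\ref{defn:surgery-c}) yields $\tilde\Sigma^j$. By construction $\tilde\Sigma^j$ is $G$-equivariant, is obtained from $\Sigma^j$ through surgery, agrees with $\Sigma^j$ on $U_\varepsilon\Gamma$ (every modification took place where $d>\varepsilon$) and is contained in $U_{2\varepsilon}\Gamma$.

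The main obstacle is the capping construction: one must arrange the spanning discs and half-discs so that, together with the thin necks to be removed, they form legitimate surgery data in the sense of Definition~\ref{defn:surgery} — embedded, mutually disjoint, disjoint from the remainder of $\Sigma^j$, with union a topological sphere in the interior of $M$ (respectively a properly embedded topological disc), and $G$-equivariant. The nesting of the level-set curves and the behaviour of the caps near $\partial M$ — exactly where the half-neck case departs from the closed setting of \cite{DeLellisPellandini2010} — are the delicate points: the former is handled by the innermost-to-outermost induction as in \cite{DeLellisPellandini2010}*{Proposition~2.3} and \cite{CarlottoFranzSchulz2022}*{Lemma~4.4}, the latter uses the Fermi-coordinate structure fixed at the outset, and the $G$-equivariance is automatic since every object in the construction is built from the $G$-invariant function $\dist(\cdot,\spt\Gamma)$.
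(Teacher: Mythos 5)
Your proposal is correct and follows essentially the same route as the paper's proof: varifold convergence plus the coarea formula produce a transversal level of the distance function to $\spt\Gamma$ in $\interval{\varepsilon,2\varepsilon}$ along which the slice has small total length, each circle or arc of the slice bounds a small disc or half-disc in the corresponding level surface, nested caps are separated by placing them at slightly different levels in an innermost-to-outermost induction, the outer components are discarded, and $G$-equivariance is automatic because every object is built from the $G$-invariant distance function. The only cosmetic differences are that the paper works with a whole band $[s_0-\delta,s_0+\delta]$ of transversal levels and places the nested caps explicitly at the levels $s_0\pm\delta/k$ ordered by disc area, whereas you use a single regular value and push the caps off it.
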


\begin{proof}
Let $\varepsilon>0$ be so small that there exists a smooth retraction of $U_{2\varepsilon}\Gamma$ onto $\spt\Gamma$. 
Convergence in the sense of varifolds implies that given any $\eta>0$ there exists $J_{\varepsilon,\eta}\in\N$ such that $\hsd^2(\Sigma^j\setminus U_\varepsilon\Gamma)<\eta$ for every $j\geq J_{\varepsilon,\eta}$.  
For $s\in\interval{0,2\varepsilon}$ the (relative) boundaries 
\begin{align*}
V_s\Gamma\vcentcolon=\overline{\partial(U_s\Gamma)\setminus\partial M}
\end{align*}
foliate $U_{2\varepsilon}\Gamma$ smoothly. 
Hence, by the coarea formula, there is a finite constant $C>0$ such that for all $j\geq J_{\varepsilon,\eta}$
\begin{align*}
\int^{2\varepsilon}_{\varepsilon}\hsd^1(\Sigma^j\cap V_s\Gamma)\,ds
&\leq C\hsd^2(\Sigma^j\setminus U_\varepsilon\Gamma)<C\eta.
\end{align*}
Thus, there exists an open subset $E\subset\interval{\varepsilon,2\varepsilon}$ with measure at least $\varepsilon/2$ such that for all $s\in E$   
\begin{align*}
	\hsd^1(\Sigma^j\cap V_s\Gamma)<\frac{2C\eta}{\varepsilon}. 
\end{align*}
Since both surfaces are compact, the set of all $s\in E$ for which $V_s\Gamma$ intersects $\Sigma^j$ transversally is open and dense in $E$ by Sard's theorem.  
Hence, there exist $s_0\in\interval{\varepsilon,2\varepsilon}$ and $\delta>0$ such that $[s_0-\delta,s_0+\delta]\subset E$ and such that 
for all $s\in[s_0-\delta,s_0+\delta]$ the intersection between $\Sigma^j$ and $V_s\Gamma$ is transverse. 
In particular, any connected component of $\Sigma^j\cap V_s\Gamma$ is either a simple closed curve or a segment connecting two points on $\partial\Sigma^j$. 

There exists $\lambda>0$ (depending on $\Gamma$ and $\varepsilon$) such that for any $s\in\interval{\varepsilon,2\varepsilon}$ any simple closed curve in $V_s\Gamma$ with length less than $\lambda$ bounds an embedded disc in $V_s\Gamma$ and the closure of any segment in $V_s\Gamma$ connecting two points on $\partial M
$ with length less than $\lambda$ bounds an embedded half-disc in $V_s\Gamma$. 
Choosing first $\eta>0$ such that $2C\eta<\lambda\varepsilon$ and then $j\geq J_{\varepsilon,\eta}$ and 
$s\in[s_0-\delta,s_0+\delta]\subset E\subset\interval{\varepsilon,2\varepsilon}$ as above, we ensure that 
each connected component $v$ of $\Sigma^j\cap V_s\Gamma$ has length less than $\lambda$ and thus bounds either a disc or a half-disc in $V_s\Gamma$ which we denote by $D_{s}^{v}$. 
In particular, 
$\Sigma^j\cap (U_{s_0+\delta}\Gamma\setminus\overline{U_{s_0-\delta}\Gamma})$  
is a finite collection of embedded, pairwise disjoint necks and half-necks in the sense of Definition~\ref{defn:surgery}. 
In principle these necks could be ``nested'' in the sense that 
$D_{s_0}^{v}\subsetneq D_{s_0}^{w}$ for different connected components $v,w$ of $\Sigma^j\cap V_{s_0}\Gamma$. 
Note that in this case $D_{s}^{v}\subsetneq D_{s}^{w}$ for all $s\in[s_0-\delta,s_0+\delta]$ since $\Sigma^j$ is embedded. 
The image of the (possibly noninjective) map $v\mapsto\hsd^2(D_{s_0}^v)$ is a discrete set of values $a_1<\ldots <a_m$.
Let $v$ be a connected component of $\Sigma^j\cap V_{s_0}\Gamma$ such that $D_{s_0}^v$ has area $a_k$. 
Removing the corresponding connected component of $\Sigma^j\cap (U_{s_0+\delta/k}\Gamma\setminus\overline{U_{s_0-\delta/k}\Gamma})$ and replacing it with $D_{s_0\pm\delta/k}^v$ is an admissible surgery in the sense of Definition~\ref{defn:surgery} provided it is done for all such $v$ and all $k\in\{1,\ldots,m\}$ in increasing order. 
This procedure preserves the $G$-equivariance, since at each step the union of all surfaces involved is $G$-equivariant.
Indeed, since $\Sigma^j$ and $V_s\Gamma$ are $G$-equivariant, the union of all discs $D_{s_0}^v$ having area $a_k$ is also $G$-equivariant. 

Let $\hat\Sigma^j$ be the new surface obtained from $\Sigma^j$ through the procedure described in the previous paragraph. 
By construction, $\hat\Sigma^j$ is disjoint from $V_{s_0}\Gamma$. 
We may regularize $\hat\Sigma^j$ such that $\hat\Sigma^j$ is smooth and still has the properties of being $G$-equivariant, disjoint from $V_{s_0}\Gamma$ and obtained from $\Sigma^j$ through surgery. 
Then, $\tilde\Sigma^j\vcentcolon=\hat\Sigma^j\cap U_{s_0}\Gamma$ is a surface obtained from $\hat\Sigma^j$ by dropping a finite number of connected components and it satisfies 
$\tilde\Sigma^j\subset U_{2\varepsilon}\Gamma$ and $\tilde\Sigma^j\cap U_\varepsilon\Gamma=\Sigma^j\cap U_\varepsilon\Gamma$.
\end{proof}

\section{Topological lower semicontinuity} \label{sec:mainproofs}

The goal of this section is to prove Theorems~\ref{thm:main} and~\ref{thm:mainorientable}.
We start by introducing further notation and terminology. 
As before, $M$ denotes the compact, three-dimensional ambient manifold with strictly mean convex boundary $\partial M$ and $G$ is a finite group of orientation-preserving isometries of $M$. 
An \emph{isotopy} of $M$ is a smooth map $\Psi\colon[0,1]\times M\to M$ such that $\Psi(s,\cdot)$ is a diffeomorphism for all $s\in[0,1]$ which coincides with the identity for $s=0$.

\begin{definition}
We say that an open set $U\subset M$ is \emph{$G$-compatible} if $\varphi(U)$ is either disjoint from $U$ or equal to $U$ for every $\varphi\in G$.
Given a $G$-equivariant surface $\Sigma\subset M$ and a $G$-compatible set $U\subset M$, we denote by $\Is_G^\delta(U,\Sigma)$ the set of isotopies $\Psi\colon[0,1]\times M\to M$ satisfying the following properties:
\begin{itemize}[nosep]
\item $\Psi$ is supported in $U$ in the sense that $\Psi(s,x)=x$ for all $x\in M\setminus U$ and $s\in[0,1]$;
\item $\hsd^2\bigl(\Psi(s,\Sigma)\bigr)\leq\hsd^2(\Sigma)+\delta$ for all $s\in[0,1]$;
\item $\Psi(s,\cdot)$ commutes with the action of all $\varphi\in G$ satisfying $\varphi(U)=U$.
\end{itemize}
\end{definition}

\begin{definition}
\label{def:AlmostMinimizing}
Given $\delta,\epsilon>0$, a $G$-compatible open set $U\subset M$, and a $G$-equivariant surface $\Sigma\subset M$, we say that $\Sigma$ is \emph{$(G,\delta,\epsilon)$-almost minimizing} in $U$ if for every isotopy $\Psi\in\Is_G^\delta(U,\Sigma)$ 
\[
\hsd^2(\Psi(1,\Sigma))\geq\hsd^2(\Sigma)-\epsilon.
\]
A sequence $\{\Sigma^j\}_{j\in\N}$ of $G$-equivariant surfaces is called \emph{$G$-almost minimizing} in $U$ if there exist $\delta_j,\epsilon_j>0$ with $\epsilon_j\to 0$ as $j\to\infty$ such that $\Sigma^j$ is $(G,\delta_j,\epsilon_j)$-almost minimizing in $U$ for all $j\in\N$.
\end{definition}

The open metric ball $B_{r}(x)\subset M$ of radius $r>0$ around any $x\in M$ is $G$-compatible provided that $r$ is sufficently small 
(cf.~\cite[Remark~10.1.3]{Franz2022}). 
While min-max sequences are not even locally area minimizing in general, they are almost minimizing in sufficiently small annuli, as stated in the subsequent lemma, where $\mathcal{AN}_{r}(x)$ denotes the collection of all annuli $B_{r_2}(x)\setminus\overline{B_{r_1}(x)}\subset M$ for some $0<r_1<r_2<r$. 

\begin{lemma}[{cf.~\cite[Lemma~13.2.4 and Proposition~13.5.3]{Franz2022}}]\label{lem:SeqAlmMin}
There exists a $G$-invariant function $r\colon M \to\interval{0,\infty}$ such that the min-max sequence $\{\Sigma^j\}_{j\in\N}$ from Theorem~\ref{thm:PreviousResults} is $G$-almost minimizing in every set in $\mathcal{AN}_{r(x)}(x)$ for all $x\in M$. 
\end{lemma}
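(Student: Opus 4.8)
The plan is to argue by contradiction along the combinatorial lines of Almgren--Pitts \cite{Almgren1965,Pitts1981}, in the form developed for the Simon--Smith setting by Colding--De Lellis \cite{ColdingDeLellis2003}*{\S\,5--7}, adapted to the free boundary case by Li \cite{Li2015} and to the equivariant setting by Ketover \cite{Ketover2016Equivariant,Ketover2016FBMS}; the combination of these ingredients carried out in \cite{Franz2022}*{Part~III} (concretely \cite{Franz2022}*{Lemma~13.2.4 and Proposition~13.5.3}) is precisely what yields the statement, and I would ultimately defer to that reference for the full details. Recall that the min-max sequence $\{\Sigma^j\}_{j\in\N}$ of Theorem~\ref{thm:PreviousResults} is extracted as $\Sigma^j=\Lambda^j_{t_j}$ from a minimizing sequence of $G$-sweepouts $\{\{\Lambda^j_t\}_{t\in[0,1]^n}\}_{j\in\N}$ in the saturation $\Pi$ with $\sup_t\hsd^2(\Lambda^j_t)\to W_\Pi$, so that any competitor sweepout we build will have to live in $\Pi$ and hence cannot have smaller maximal area than $W_\Pi$.

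First I would fix, for each $x\in M$, a radius $\bar r(x)>0$ small enough that every ball $B_{\bar r(x)}(x)$ and every annulus in $\mathcal{AN}_{\bar r(x)}(x)$ is $G$-compatible (possible by \cite{Franz2022}*{Remark~10.1.3}) and that the free boundary maximum principle and replacement arguments apply inside these sets; here the strict mean convexity of $\partial M$ enters, through \cite{GuangLiZhou2020}*{Theorem~6.1}, to guarantee that the relevant area-decreasing deformations of the pieces meeting $\partial M$ exist and keep the surfaces properly embedded. Averaging over the finite group $G$, this $\bar r$ can be taken $G$-invariant. The claim would then be that $r=\bar r$ works, after a suitable choice of the min-max sequence converging to $\Gamma$.

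Next I would set up the contradiction. Suppose that no such choice of the sequence is $G$-almost minimizing in all the annuli $\mathcal{AN}_{\bar r(x)}(x)$, $x\in M$. Unwinding Definition~\ref{def:AlmostMinimizing} via a diagonal argument over $j$ then produces a constant $\epsilon_0>0$ and, along a subsequence, points $x_j\in M$, annuli $A_j\in\mathcal{AN}_{\bar r(x_j)}(x_j)$, and isotopies $\Psi_j\in\Is_G^{\delta_j}(A_j,\Sigma^j)$ with $\delta_j\downarrow 0$ such that $\hsd^2(\Psi_j(1,\Sigma^j))<\hsd^2(\Sigma^j)-\epsilon_0$: that is, near the points $x_j$ the surfaces admit $G$-equivariant, essentially area-nonincreasing deformations that genuinely lower the area by a uniform amount.

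The hard part will be the ensuing combinatorial interpolation. Using these localized, $G$-equivariant, area-decreasing deformations together with a finite cover of $M$ by $G$-compatible balls, I would glue them along a sufficiently fine subdivision of the cube $[0,1]^n$ into a single competitor $\{\hat\Lambda^j_t\}_{t\in[0,1]^n}\in\Pi$ — with each $\hat\Lambda^j_t$ obtained from $\Lambda^j_t$ by a $G$-equivariant ambient isotopy that is the identity on $\partial[0,1]^n$ — such that $\sup_t\hsd^2(\hat\Lambda^j_t)\le W_\Pi-\tfrac{\epsilon_0}{2}$ for all large $j$, contradicting the definition of $W_\Pi$ as an infimum. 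Carrying this out rigorously requires the discretization and interpolation theorems relating continuous and discrete sweepouts, performed $G$-equivariantly and compatibly with the free boundary (for $x\in\partial M$ the ball $B_{\bar r(x)}(x)$ is a relative half-ball, the annulus degenerates to a half-annulus, and the corresponding replacement is the free boundary one), which is exactly the technically heaviest content of \cite{Franz2022}*{Lemma~13.2.4 and Proposition~13.5.3}; I expect this patching step, rather than the local deformation or the contradiction setup, to be the main obstacle.
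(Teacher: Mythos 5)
The paper does not prove this lemma at all: it is imported verbatim from \cite{Franz2022}*{Lemma~13.2.4 and Proposition~13.5.3}, so there is no in-paper argument to compare against. Your sketch is the standard Almgren--Pitts/Colding--De~Lellis contradiction scheme (negate almost-minimizing to extract uniformly area-decreasing equivariant isotopies in small annuli, then patch them over a fine subdivision of the parameter cube into a competitor in $\Pi$ beating $W_\Pi$), which is exactly the argument underlying the cited reference, and you correctly identify the discretization/interpolation and the free boundary half-annulus replacements as the technical core that you defer to \cite{Franz2022}. The only point worth flagging is that your negation step is slightly too direct: the passage from ``some slice fails to be almost minimizing in some annulus'' to a globally patched competitor goes through the intermediate combinatorial statement that a suitable min-max sequence is almost minimizing in at least one member of every admissible collection of pairwise disjoint, sufficiently separated $G$-compatible annuli (this is also where the function $r$ actually gets determined, not merely by $G$-compatibility of the balls); you gesture at this but fold it into the ``patching'' step, which is acceptable for a sketch that explicitly defers to the reference.
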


The regularity theorem for the limit of an almost minimizing sequence is stated below. 
For the proof and the literature related to this result, we refer to \cite[Proposition~13.5.3]{Franz2022}.

\begin{proposition}
\label{prop:SmoothLimitAlmMin}
Let $M$ be a three-dimensional Riemannian manifold with strictly mean convex boundary and let $G$ be a finite group of orientation-preserving isometries of $M$. 
Let $\{\Sigma^j\}_{j\in\N}$ be a sequence of smooth surfaces which are properly embedded in $M$ and $G$-almost minimizing in every set in $\mathcal{AN}_{r(x)}(x)$ for all $x\in M$, where $r\colon M \to\interval{0,\infty}$ is a $G$-invariant function. 
Then (up to a subsequence) $\{\Sigma^j\}_{j\in\N}$ converges in the sense of varifolds to $\Gamma=\sum_{i=1}^{\compgamma}m_i\Gamma_i$, 
where the varifolds $\Gamma_1,\ldots,\Gamma_\compgamma$ are induced by pairwise disjoint, connected free boundary minimal surfaces in $M$ and where the multiplicities $m_1,\ldots,m_\compgamma$ are positive integers.
\end{proposition}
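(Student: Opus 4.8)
The plan is to carry out the by-now standard regularity scheme for min-max varifolds --- originating with Almgren--Pitts \cite{Almgren1965,Pitts1981}, developed in the Simon--Smith setting by Colding--De Lellis \cite{ColdingDeLellis2003}, adapted to the free boundary situation by Li \cite{Li2015} and Gr\"uter--Jost \cite{GruterJost1986}, and to the equivariant situation by Ketover \cite{Ketover2016Equivariant,Ketover2016FBMS} --- in the present combined free boundary and equivariant framework. In the min-max setting of Lemma~\ref{lem:SeqAlmMin} the areas $\hsd^2(\Sigma^j)$ are uniformly bounded, so by compactness of the space of $2$-varifolds with uniformly bounded mass there is a subsequence, not relabelled, converging in the sense of varifolds to a $2$-varifold $V$ in $M$. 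The goal is to prove that $V$ is stationary with free boundary and that $\spt V$ is a smooth, properly embedded minimal surface; the asserted decomposition $\Gamma=\sum_{i=1}^{\compgamma}m_i\Gamma_i$ then follows by passing to connected components and invoking the constancy theorem.

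First I would establish that $V$ is stationary with free boundary, i.e.\ that its first variation vanishes against every smooth ambient vector field $X$ on $M$ which is tangent to $\partial M$ along $\partial M$. The flow of such an $X$ is an isotopy of $M$ preserving $\partial M$; if $\delta V(X)\neq0$ then, for a suitable choice of $X$ supported in a small ball, this flow or its reverse would decrease $\hsd^2(\Sigma^j)$ by a definite amount exceeding $\epsilon_j$ for large $j$, contradicting the almost-minimizing property. Since $\Sigma^j$ is $G$-almost minimizing only in small annuli, one first handles test fields supported in annuli and then removes the centre point by a logarithmic cut-off argument, so that $\delta V(X)=0$ for all admissible $X$. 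Strict mean convexity of $\partial M$, together with the maximum principle in the form of \cite{GuangLiZhou2020}*{Theorem~6.1} (cf.\ the discussion preceding Theorem~\ref{thm:PreviousResults}), rules out that any portion of $\spt V$ is contained in $\partial M$, so $V$ is a genuine free boundary stationary varifold.

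The heart of the argument is the replacement construction and the ensuing regularity. Fix $p\in M$ and a radius so small that every $A\in\mathcal{AN}_{r(p)}(p)$ is $G$-compatible. Using that $\Sigma^j$ is $G$-almost minimizing in $A$, replace $\Sigma^j$ inside $A$ by a $G$-equivariant surface minimizing area in the relevant isotopy class, equal to $\Sigma^j$ outside $A$ and of no greater area; by the interior regularity theory for isotopic area minimizers (in the spirit of Meeks--Simon--Yau) and by Gr\"uter--Jost \cite{GruterJost1986} (see also \cite{Gruter1986regularity}) when $A$ meets $\partial M$, these replacements are smooth, stable, properly embedded minimal surfaces in $A$. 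Passing to the limit yields a \emph{replacement} $V^*$ of $V$ in $A$: a stationary varifold agreeing with $V$ outside $A$ whose support is smooth and minimal in $A$. As in \cite{ColdingDeLellis2003} and its free boundary counterpart \cite{Li2015}, $V$ again admits replacements inside any annulus contained in $A$; comparing $V$ with two successive replacements forces every tangent cone of $V$ --- at interior points a plane, at boundary points a half-plane, each with integer multiplicity --- to be unique. Allard's theorem in the interior and its free boundary version at $\partial M$ then give that $\spt V$ is a smooth embedded minimal surface away from finitely many points, while embeddedness of the $\Sigma^j$ and the almost-minimizing property exclude self-intersections and branch points; a removable-singularity argument disposes of the remaining points. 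At a point with nontrivial $G$-stabilizer one runs the same scheme equivariantly, using the $G$-invariance of $r$, the $G$-equivariance of the replacements, and the equivariant Allard-type statements from \cite{Ketover2016Equivariant,Ketover2016FBMS}.

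Finally, since $M$ is compact, $\spt V$ has finitely many connected components; let $\Gamma_1,\dots,\Gamma_{\compgamma}$ be the varifolds they induce --- connected, and pairwise disjoint because $\spt V$ is embedded. On each $\Gamma_i$ the constancy theorem yields a constant density $m_i$, which is a positive integer since $V$ is the varifold limit of the smooth embedded surfaces $\Sigma^j$ and these converge, near each point of the smooth surface $\spt\Gamma_i$, to a finite multiple of it; hence $V=\Gamma=\sum_{i=1}^{\compgamma}m_i\Gamma_i$. I expect the main obstacle to lie in the boundary regularity within the replacement step: proving that the area-minimizing replacements are smooth \emph{up to} the free boundary, meet $\partial M$ orthogonally, and that this survives in the limit, and, crucially, that strict mean convexity of $\partial M$ prevents the $\Sigma^j$ from accumulating area along $\partial M$ (so that $\spt V$ has no spurious boundary points). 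This is exactly where the hypothesis on $\partial M$ enters, via the free boundary versions of Allard's theorem and of the maximum principle.
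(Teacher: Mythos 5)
The paper does not prove this proposition itself: it is stated as a known result, with the proof explicitly deferred to \cite{Franz2022}*{Proposition~13.5.3}, which in turn assembles the contributions of \cite{ColdingDeLellis2003}, \cite{Li2015} and \cite{Ketover2016Equivariant,Ketover2016FBMS}. Your outline follows exactly that standard scheme --- varifold compactness, stationarity with free boundary via the almost-minimizing property, equivariant replacements, tangent-cone uniqueness, interior and free boundary Allard-type regularity, and the constancy theorem --- so it matches the intended argument, with the caveat that, just as the paper does, you leave the genuinely delicate steps (the equivariant replacement construction near the singular locus and regularity up to the free boundary under strict mean convexity) at the level of citations rather than carrying them out.
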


\subsection{Lifting lemmata}

Simon's lifting lemma is key for the control on the topology of the limit surface $\Gamma$ obtained in Proposition~\ref{prop:SmoothLimitAlmMin}. 
We recall the notation $U_\varepsilon\Gamma$ for the $\varepsilon$-neighbourhood around the support of $\Gamma$ in $M$ from equation \eqref{eqn:Ueps}.  
As in the proof of Proposition \ref{prop:PerformingSurgery}, there exists some $\varepsilon_0>0$ such that there is a smooth retraction of $U_{2\varepsilon_0}\Gamma$ onto the support of $\Gamma=\sum_{i=1}^{\compgamma}m_i\Gamma_i$. 
 
\begin{lemma}[{Simon's lifting lemma, cf.~\cite[Proposition~2.1]{DeLellisPellandini2010}}] \label{lem:SimonLemma}
In the setting of Proposition~\ref{prop:SmoothLimitAlmMin}, let $\gamma$ be a simple closed curve in the support of $\Gamma_i$ for some $i\in\{1,\ldots,\compgamma\}$ and let $0<\varepsilon\leq\varepsilon_0$. 
Then, for all sufficiently large $j\in\N$, there is a positive integer $m\le m_i$ and a closed curve $\gamma^j$ in $\Sigma^j\cap U_\varepsilon\Gamma_i$ which is homotopic to $m\gamma$ in $U_\varepsilon\Gamma_i$. 

\end{lemma}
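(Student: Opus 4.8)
The plan is to follow the classical argument of Simon as presented in \cite{DeLellisPellandini2010}*{Proposition~2.1}, adapting it to the free boundary setting by working inside the interior of $M$ away from $\partial M$. First I would fix $i$ and the simple closed curve $\gamma\subset\spt\Gamma_i$, and choose $\varepsilon\in(0,\varepsilon_0]$ small enough that the tubular neighbourhood $U_\varepsilon\Gamma_i$ retracts onto $\spt\Gamma_i$ via a smooth retraction $\pi$ whose fibers $\pi^{-1}(x)$ are short geodesic segments (transverse discs) meeting $\spt\Gamma_i$ at a single point. Since $\gamma$ is compact, I would cover a neighbourhood of $\gamma$ in $\spt\Gamma_i$ by finitely many ``flow boxes'' in which $U_\varepsilon\Gamma_i$ looks like a product $D\times(-\varepsilon,\varepsilon)$, where $D$ is a small disc in $\spt\Gamma_i$; concatenating these boxes along $\gamma$ gives a solid-torus-like region $T\subset U_\varepsilon\Gamma_i$ (a neighbourhood of $\gamma$) fibered by the transverse segments.

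The core of the argument is a degree/intersection-number computation. Because $\Sigma^j\to\Gamma$ in the sense of varifolds with multiplicity $m_i$ along $\Gamma_i$, for $j$ large the surface $\Sigma^j$ lies in $U_\varepsilon\Gamma$ near $\Gamma_i$ (after applying the surgery of Proposition~\ref{prop:PerformingSurgery} if necessary, or simply using that the part of $\Sigma^j$ straying outside $U_\varepsilon\Gamma$ has vanishing area), and $\Sigma^j$ is nearly ``horizontal'' in each flow box, so the restriction $\pi|_{\Sigma^j\cap T}$ is a proper submersion onto a neighbourhood of $\gamma$ in $\spt\Gamma_i$ for a.e.\ choice of the fiber direction (by Sard). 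The algebraic number of intersection points of $\Sigma^j$ with a fixed transverse segment $\pi^{-1}(x_0)$, counted with sign, equals $\pm m_i$ by the constancy theorem / monotonicity of the varifold limit; call the geometric number of such points $m'\le$ (something), but the signed count is $\pm m_i$. Following $\Sigma^j\cap T$ as one traverses $\gamma$ once, the monodromy permutes the finitely many sheets of $\Sigma^j$ over the loop $\gamma$; picking one sheet and following it, it closes up after some number $m$ of loops around $\gamma$, with $1\le m\le m_i$, producing a closed curve $\gamma^j\subset\Sigma^j\cap U_\varepsilon\Gamma_i$ with $\pi\circ\gamma^j$ homotopic to $m\gamma$ in $\spt\Gamma_i$. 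Composing with the retraction $\pi$ (which is a homotopy equivalence $U_\varepsilon\Gamma_i\simeq\spt\Gamma_i$) shows $\gamma^j$ is homotopic to $m\gamma$ in $U_\varepsilon\Gamma_i$, as claimed.

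I would make the sheet-counting rigorous as follows: near $\gamma$, write $\Sigma^j\cap T$ as a (possibly disconnected) multigraph over $\spt\Gamma_i\cap T\cong\gamma\times D'$; its connected components are finite covers of the base annular neighbourhood of $\gamma$, and each such cover of degree $m$ over the $\gamma$-direction contributes a curve homotopic to $m\gamma$. At least one component has $m\ge 1$, and the total degree over a transverse segment being $m_i$ forces $m\le m_i$ for each component. This is where one must be slightly careful about boundary effects: since $\gamma$ is a simple closed curve \emph{in the interior} of $\spt\Gamma_i$ — or at worst can be pushed into the interior, as $\spt\Gamma_i$ is a free boundary minimal surface meeting $\partial M$ orthogonally, so a collar of $\partial\Gamma_i$ deformation retracts onto $\partial\Gamma_i$ and curves near the boundary can be homotoped inward — the flow boxes stay in the interior of $M$ and no new phenomena arise compared to the closed case. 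The main obstacle, as usual, is ensuring that $\Sigma^j$ is genuinely graphical (nearly horizontal) over the transverse fibration in $T$; this follows from the varifold convergence together with the almost-minimizing property via the standard $\varepsilon$-regularity estimates underlying Proposition~\ref{prop:SmoothLimitAlmMin}, which give curvature bounds and hence local graphicality of $\Sigma^j$ on the scale of $\varepsilon$ away from finitely many concentration points, and those points can be avoided by perturbing $\gamma$ slightly within its homotopy class.
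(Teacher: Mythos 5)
First, note that the paper does not reprove this lemma at all: it is quoted directly from \cite{DeLellisPellandini2010}*{Proposition~2.1}, with only a remark explaining why the argument there survives without orientability and with the relaxed smoothness of Remark~\ref{rmk:FiniteBadPoints}. So your proposal is an attempt at a direct proof, which is a different route from the paper's.

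That attempted proof has a genuine gap at its core step. You assert that $\Sigma^j$ is ``nearly horizontal'' in each flow box, so that $\pi|_{\Sigma^j\cap T}$ is a finite covering of a neighbourhood of $\gamma$, and you justify this by claiming that varifold convergence plus the almost-minimizing property yield curvature bounds and local graphicality of $\Sigma^j$ on scale $\varepsilon$. This is false: the almost-minimizing property (Definition~\ref{def:AlmostMinimizing}) constrains only the area behaviour of $\Sigma^j$ under isotopies, not its pointwise geometry, and varifold convergence does not prevent $\Sigma^j$ from oscillating wildly, carrying thin necks, or having unbounded second fundamental form near $\gamma$. The curvature estimates and smooth (multi-sheeted) convergence hold only for the \emph{replacements} $V^j$ obtained by minimizing in the isotopy class inside small balls (Lemma~\ref{lem:LocalPictureConv}~\ref{lpc:vj}--\ref{lpc:vjconv}); they are $G$-stable minimal surfaces, whereas $\Sigma^j$ is not. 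Since the lemma asks for a curve lying in $\Sigma^j$ itself, the whole difficulty — and the actual content of Simon's lifting lemma, mirrored in the paper's proof of Lemma~\ref{lem:BoundarySimonLemma} via Claims 0--2 — is transferring connectivity information from the replacements back to $\Sigma^j$, using that the minimization $\Phi^j_k(1,\hat\Sigma^j)\to\hat V^j$ has multiplicity one (Lemma~\ref{lem:LocalPictureConv}~\ref{lpc:conncomp}) and propagating the choice of connected component from ball to ball. Your monodromy/degree argument skips exactly this step. Two further, smaller problems: the signed intersection count ``$=\pm m_i$ by the constancy theorem'' requires convergence as currents and an orientation, neither of which is available from varifold convergence of possibly nonorientable $\Sigma^j$; and invoking Proposition~\ref{prop:PerformingSurgery} replaces $\Sigma^j$ by a surgered surface, whereas the statement requires $\gamma^j\subset\Sigma^j$.
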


\begin{remark}
Simon's lifting lemma does not require the surfaces $\Sigma^j$ to be orientable. 
Indeed, most of the proof relies on local arguments (which do not detect the orientability of the surface). 
The only step where the argument is of global nature can be found in Section~4.3 of \cite{DeLellisPellandini2010}, but one can easily check that this does not rely on orientability either (even though orientability is part of \cite[Definition~0.5]{DeLellisPellandini2010}). 
Moreover, Lemma \ref{lem:SimonLemma} is robust against relaxing the assumptions on the smoothness of the sweepout according to Remark~\ref{rmk:FiniteBadPoints} because curves can always be chosen to avoid finitely many points. 
\end{remark}

We aim at proving a free boundary version of Simon's lifting lemma in the sense that it applies to ``unclosed'' curves, or -- more generally -- a loopfree network of curves equipped with a tree structure.

\begin{definition}[Trees] 
Let $U\subseteq M$ be any submanifold. 
A \emph{tree} in $U$ consists of finite sets of vertices $\{v_0,\ldots,v_n\}\subset U$ and edges $\gamma_1,\ldots,\gamma_n\subset U$ such that 
\begin{itemize}[nosep]
\item each edge is a smooth, embedded curve in $U$ connecting two distinct vertices, 
\item the intersection of two distinct edges consists of at most one vertex, 
\item the union of all edges is connected and does not contain any closed, embedded curves. 
\end{itemize}
Vertices contained in exactly one edge are called \emph{leaves}. 
We call a tree in $U$ \emph{properly embedded} in $M$ if the set of leaves coincides with $(\bigcup_{i=1}^n\gamma_i)\cap(\partial M)$. 
A \emph{rooted} tree has one designated first vertex, e.\,g.~$v_0$, called \emph{root}. 
For our purposes it is convenient to require that the root of a tree must be an \emph{interior} point in $M$, 
ensuring that rooted trees feature at least one interior vertex. 

We call two properly embedded trees in $U$ \emph{properly homotopic} in $U$ if the two trees can be continuously deformed into each other while preserving the local structure around each vertex and each edge, and while constraining the leaves to $\partial M$. 
More precisely, if the first tree has vertices $\{v_0,\ldots,v_n\}\subset U$ and edges $\gamma_1,\ldots,\gamma_n\subset U$ then the second tree has the same number of vertices $\{v_0',\ldots,v_n'\}\subset U$ and edges $\gamma_1',\ldots,\gamma_n'\subset U$, and there exists a continuous function $H\colon[0,1]\times\bigcup_{i=1}^n\gamma_i\to U$ such that 
\begin{itemize}[nosep]
\item $H(0,\cdot)$ is the identity, $H(1,v_i)=v_i'$ for all $i=0,\ldots,n$ and $H(1,\gamma_i) = \gamma_i'$ for all $i=1,\ldots,n$; 
\item for every $t\in[0,1]$ the two sets $\{H(t,v_0),\ldots,H(t,v_n)\}$ and 
$\{H(t,\gamma_1),\ldots,H(t,\gamma_n)\}$ form a properly embedded tree in $U$. 
\end{itemize}
\end{definition}
 
\begin{lemma}[Tree lifting lemma]\label{lem:BoundarySimonLemma}
In the setting of Proposition~\ref{prop:SmoothLimitAlmMin}, let $T$ be a properly embedded, rooted tree in $\Gamma_i$ for some $i\in\{1,\ldots,\compgamma\}$. 
Given $0<\varepsilon\leq\varepsilon_0$ and any sufficiently large $j\in\N$, there is 
a properly embedded tree $T^j$ in $\Sigma^j\cap U_\varepsilon\Gamma_i$ which is properly homotopic to $T$ in $U_\varepsilon\Gamma_i$. 
\end{lemma}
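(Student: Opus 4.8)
The plan is to reduce the tree lifting lemma to repeated application of the classical Simon lifting lemma (Lemma~\ref{lem:SimonLemma}), together with a free boundary variant for single edges. The key observation is that a rooted, properly embedded tree $T$ in $\Gamma_i$ can be built up edge by edge starting from the root $v_0$, and each edge is either an ``interior'' segment joining two interior vertices, or a ``boundary'' segment one of whose endpoints is a leaf on $\partial M$. I would first establish the lifting statement for a single edge and then assemble the tree inductively, taking care that the lifted pieces share the correct vertices so that they actually fit together into an embedded tree in $\Sigma^j$.

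\textbf{Step 1: a single-edge (free boundary) lifting lemma.}
First I would prove the analogue of Lemma~\ref{lem:SimonLemma} for a single embedded arc $\gamma\subset\Gamma_i$ connecting two points $p,q$, where either both lie in the interior or one of them lies on $\partial M$ and $\gamma$ meets $\partial M$ only there. The mechanism is the same as in Simon's original argument: using the smooth retraction $\pi\colon U_{2\varepsilon_0}\Gamma\to\spt\Gamma$ and the almost minimizing property in small annuli, one shows that for $j$ large the portion of $\Sigma^j$ in a thin tube $U_\varepsilon\Gamma_i$ around $\gamma$ contains a connected arc whose $\pi$-image traverses $\gamma$, with the endpoints of the lifted arc lying over (or $\varepsilon$-close to) $p$ and $q$ respectively. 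In the boundary case, properness of $\Sigma^j$ (which holds since $\partial M$ is strictly mean convex) forces the lifted arc to actually terminate on $\partial M$, exactly where the retraction sends a neighbourhood of the leaf. The local nature of the argument means orientability is not needed, and finitely many bad points can be avoided, as in the remark after Lemma~\ref{lem:SimonLemma}.

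\textbf{Step 2: induction on the number of edges and gluing at vertices.}
With the single-edge statement in hand, I would order the edges $\gamma_1,\dots,\gamma_n$ of $T$ so that each $\gamma_k$ attaches to the subtree already built through exactly one of its endpoints (a standard property of trees: remove leaves one at a time, or equivalently list edges in a breadth-first order from $v_0$). Inductively assume we have lifted $\gamma_1,\dots,\gamma_{k-1}$ to a properly embedded tree $T^j_{k-1}$ in $\Sigma^j\cap U_\varepsilon\Gamma_i$ with vertices $v_0^j,\dots$ lying near the corresponding $v_i$. To add $\gamma_k$: the endpoint of $\gamma_k$ shared with the current subtree already has a lift $v^j$; apply Step~1 in a tube around $\gamma_k$ to produce an arc in $\Sigma^j$, and then use the fact that the fibre of $\pi$ over a small neighbourhood of that vertex is a union of sheets of $\Sigma^j$ — a short path within $\Sigma^j$ inside that fibre connects the new arc's endpoint to $v^j$ without leaving $U_\varepsilon\Gamma_i$ (here one uses that $\Sigma^j$ restricted to the tube over a neighbourhood of the vertex is a disjoint union of graph-like sheets, each mapped homeomorphically onto its image by $\pi$). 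After all $n$ steps we may need to slightly perturb to make the union genuinely embedded (resolving any incidental transverse intersections of lifted edges inside their tubes, which is possible since they live in disjoint tubes except near shared vertices). Finally, the homotopy constructed by pushing $T^j$ along $\pi$-fibres onto $T$, tracked simultaneously for all edges and vertices, is exactly a proper homotopy in $U_\varepsilon\Gamma_i$: interior vertices stay interior and leaves stay on $\partial M$ because $\pi$ respects $\partial M$.

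\textbf{Main obstacle.}
The delicate point is Step~2: ensuring that the separately lifted edges can be joined \emph{consistently} at each vertex so that the result is a single connected, embedded tree rather than a disjoint collection of arcs sitting over the right curves. This requires controlling which sheet of $\Sigma^j$ each lifted arc lands on near a shared vertex and showing that a path within $\Sigma^j$ (not merely within $M$) stays in the thin tube while connecting them; it is here that one needs the local graphical structure of $\Sigma^j$ over the limit near $\Gamma_i$ (a consequence of the almost minimizing regularity in Proposition~\ref{prop:SmoothLimitAlmMin}) and the injectivity of $\pi$ on each sheet. The multiplicity $m_i$ plays no essential role beyond guaranteeing there is at least one sheet; unlike in Lemma~\ref{lem:SimonLemma}, no integer multiple appears because a tree is contractible and homotoping it once around is meaningless — the lift is genuinely homotopic (not $m$-fold homotopic) to $T$.
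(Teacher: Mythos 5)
There is a genuine gap in Step~2 (and implicitly in Step~1): you repeatedly invoke ``the local graphical structure of $\Sigma^j$ over the limit near $\Gamma_i$'', i.e.\ that $\Sigma^j$ restricted to a tube over a neighbourhood of a vertex is a disjoint union of sheets mapped homeomorphically by the retraction $\pi$. This is false for min-max/almost minimizing sequences: Proposition~\ref{prop:SmoothLimitAlmMin} only gives convergence of $\Sigma^j$ to $\Gamma$ in the sense of varifolds, and the surfaces $\Sigma^j$ may carry small necks, folds and other non-graphical structure arbitrarily close to $\Gamma_i$. This is precisely why Simon's lifting lemma is hard, and why the paper's proof does \emph{not} work with $\Sigma^j$ directly. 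Instead, in each small ball $B^\alpha$ along an edge it runs a further area-minimizing isotopy process (Lemma~\ref{lem:LocalPictureConv}), whose limits $V^{j,\alpha}$ \emph{are} smooth, stable and converge graphically; the lifted curve is then built by selecting, ball by ball, a connected component of $\Sigma^j\cap B^\alpha$ whose minimized replacement carries nonzero mass (Claim~0), and propagating this nonvanishing from $B^\alpha$ to $B^{\alpha+1}$ by a contradiction argument based on the continuity of $z\mapsto\nm{W}(B_{\rho/2}(z))$ along the geodesic segment $[x_\alpha,x_{\alpha+1}]$ (Claim~1, the ``continuation of the leaves''). Your sheet-consistency argument at shared vertices has no analogue of this mechanism and would not go through as stated.

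The second missing ingredient is the boundary adaptation itself, which is the actual novelty of the lemma. Your claim that ``properness of $\Sigma^j$ forces the lifted arc to terminate on $\partial M$'' skips the real issue: one must show that the selected connected component of $\Sigma^j$ in the last ball actually meets $\partial M$ (the paper's Claim~2, which uses the current-theoretic boundary control of Lemma~\ref{lem:LocalPictureConv}~\ref{lpc:conncomp}), and one must arrange the contradiction argument of Claim~1 so that the critical ``half-mass'' ball $B_{\rho/2}(z_\alpha)$ lies in the interior of $M$ — this is why the paper perturbs the edge near each leaf to a geodesic meeting $\partial M$ orthogonally and imposes the distance condition \eqref{eqn:rho}. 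Your overall architecture (lift edge by edge in breadth-first order, reusing the already-selected component at each shared vertex) does match the paper's final assembly, and your remark that no multiplicity appears because a tree is contractible is correct in spirit; but the analytic core that makes each edge liftable, especially up to $\partial M$, is not supplied.
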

 
The proof of Lemma \ref{lem:BoundarySimonLemma} relies on the local behaviour of almost minimizing sequences. 
This approach is facilitated by the following lemma. 

\begin{lemma}\label{lem:LocalPictureConv}
In the setting of Proposition~\ref{prop:SmoothLimitAlmMin}, let $B\subset M$ be a closed, $G$-compatible ball with sufficiently small radius such that there exist $\delta_j,\epsilon_j>0$ with $\epsilon_j\to0$ as $j\to\infty$ such that $\Sigma^j$ is $(G,\delta_j,\epsilon_j)$-almost minimizing in $B$. 
For every $j\in\N$, let $\{\Phi_k^j\}_{k\in\N}$ be a sequence of isotopies in $\Is_G^{\delta_j}(B,\Sigma^j)$ such that
\begin{equation}\label{eq:DefPhi}
\lim_{k\to\infty}\hsd^2\bigl(\Phi^j_k(1,\Sigma^j)\bigr)
=\inf_{\Psi\in\Is_G^{\delta_j}(B,\Sigma^j)}\hsd^2\bigl(\Psi(1,\Sigma^j)\bigr).
\end{equation}
Then, the following statements hold (cf. Figure~\ref{fig:scheme_local}). 
\begin{enumerate}[label={\normalfont(\roman*)}]
\item\label{lpc:vj} 
For all $j\in\N$ a subsequence of $\{\Phi_k^j(1,\Sigma^j)\}_{k\in\N}$ converges in the sense of varifolds in $B$ to a smooth, properly embedded, $G$-stable minimal surface $V^j\subset B$, which satisfies 
$\partial V^j\setminus\partial M=\Sigma^j\cap \partial B\setminus\partial M$ 
and the free boundary condition on $\partial M\cap B$ (if the latter is nonempty). 
\item\label{lpc:vjconv} 
The sequence $\{V^j\}_{j\in\N}$ converges in the sense of varifolds in $B$ to the same limit 
$\Gamma=\sum_{i=1}^{\compgamma}m_i\Gamma_i$ as the original sequence $\{\Sigma^j\}_{j\in\N}$. 
Furthermore, within any ball that is concentric with $B$ but has smaller radius, the convergence (up to a subsequence) is smooth away from finitely many points in the singular locus of the action of $G$. 
\item\label{lpc:conncomp} 
If $\hat\Sigma^j\subset\Sigma^j$ is the union of some of the connected components of $\Sigma^j\cap B$, then a subsequence of $\{\Phi_k^j(1,\hat\Sigma^j)\}_{k\in\N}$ converges (with multiplicity one) in the sense of varifolds in $B$ to $\hat V^j\subset B$, which is the union of some of the connected components of $V^j$ and satisfies $\partial\hat V^j\setminus\partial M=\partial\hat\Sigma^j\setminus\partial M$. 
Moreover, if $\hat V^j$ intersects $\partial M$, then $\hat\Sigma^j$ also has a nonempty intersection with $\partial M$.
\item\label{lpc:conncompconv} 
Let $\hat\Sigma^j\subset\Sigma^j$ and $\hat V^j$ be as in statement \ref{lpc:conncomp}.
Then a subsequence of $\{\hat V^j\}_{j\in\N}$ converges in the sense of varifolds in $B$ to $\sum_{i=1}^\compgamma \hat m_i\Gamma_i$, where $\hat m_i\in\{0,\ldots,m_i\}$ for $i=1,\ldots,\compgamma$. 
Moreover, within any ball that is concentric with $B$ but has smaller radius, the convergence is smooth away from finitely many points in the singular locus of the action of $G$.
\end{enumerate}
\end{lemma}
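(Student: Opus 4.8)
The plan is to run the classical local replacement construction of min-max theory (cf.\ \cite{ColdingDeLellis2003}, and \cite{Franz2022} for the equivariant free boundary version) and then to keep track of how this operation interacts with connected components and with the boundary $\partial M$. \emph{Part~(i):} Fix $j$. Since $\Phi_k^j\in\Is_G^{\delta_j}(B,\Sigma^j)$, each surface $\Phi_k^j(1,\Sigma^j)$ has mass at most $\hsd^2(\Sigma^j)+\delta_j$, so by compactness of varifolds a subsequence converges to a varifold $V^j$ supported in $B$. From \eqref{eq:DefPhi} together with the $(G,\delta_j,\epsilon_j)$-almost minimizing property, a standard diagonal argument (as in the replacement constructions of \cite{ColdingDeLellis2003} and \cite{Franz2022}) shows that $V^j$ is $G$-stationary in the interior of $B$ with $G$-stable regular part. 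The regularity theory underlying Proposition~\ref{prop:SmoothLimitAlmMin} then yields that $V^j$ is a smooth, properly embedded, $G$-stable minimal surface with $\partial V^j\setminus\partial M=\Sigma^j\cap\partial B\setminus\partial M$ and satisfying the free boundary condition along $\partial M\cap B$: interior regularity of $G$-stable minimal varifolds, boundary regularity along $\partial M$ via the strict mean convexity (as in \cite{GruterJost1986} and \cite{GuangLiZhou2020}), matching along the fixed curve $\Sigma^j\cap\partial B$, and equivariant regularity at the singular locus of $G$. Multiplicity one is then forced: after shrinking the radius of $B$, the manifold $M$ contains no connected minimal surface (closed or with free boundary on $\partial M$) inside the interior of $B$, so every connected component of $V^j$ reaches $\partial B$, where its boundary is part of the embedded curve $\Sigma^j\cap\partial B$; hence each component is attained with multiplicity one.

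\emph{Part~(ii):} Each $\Phi_k^j$ restricts to the identity outside the interior of $B$, so $V^j$ agrees there with $\Sigma^j$, whence $V^j\to\Gamma$ outside the interior of $B$. Inside $B$, taking $\Psi=\mathrm{id}$ in \eqref{eq:DefPhi} gives $\hsd^2(V^j\cap B)\le\hsd^2(\Sigma^j\cap B)$, while the almost minimizing inequality gives $\hsd^2(V^j\cap B)\ge\hsd^2(\Sigma^j\cap B)-\epsilon_j$; since $\hsd^2(\Sigma^j\cap B)\to\nm{\Gamma}(B)$ and $\epsilon_j\to0$, we obtain $\hsd^2(V^j\cap B)\to\nm{\Gamma}(B)$. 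A subsequential varifold limit $\Gamma'$ of $\{V^j\}$ is therefore stationary with free boundary in the interior of $B$ (being a limit of $G$-stable minimal surfaces), agrees with $\Gamma$ outside the interior of $B$, and has the same total mass as $\Gamma$. By the ``no small minimal surfaces'' principle, every connected component of $\spt\Gamma$ and of $\spt\Gamma'$ is too large to fit inside $B$ and hence meets the region where the two supports agree; since corresponding components there share a nonempty open piece, the unique continuation principle for minimal surfaces forces $\spt\Gamma'=\spt\Gamma$, and equality of the masses then yields $\Gamma'=\Gamma$. Consequently $V^j\to\Gamma$. The asserted smooth convergence inside a smaller concentric ball, away from at most finitely many points of the singular locus of $G$, follows from the curvature estimates for $G$-stable minimal surfaces together with the standard genus-concentration argument, exactly as in the proof of Proposition~\ref{prop:SmoothLimitAlmMin}.

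\emph{Parts (iii) and (iv):} Each $\Phi_k^j(1,\cdot)$ is an ambient diffeomorphism of $M$, hence it carries the union $\hat\Sigma^j$ of some connected components of $\Sigma^j\cap B$ to an embedded surface of multiplicity one, disjoint from $\Phi_k^j(1,\Sigma^j\setminus\hat\Sigma^j)$; passing to a further subsequence, $\Phi_k^j(1,\hat\Sigma^j)$ converges to a varifold $\hat V^j\le V^j$. Since, by part (i), the convergence $\Phi_k^j(1,\Sigma^j)\to V^j$ is smooth away from finitely many points and $V^j$ has multiplicity one, near a generic point of a connected component $C$ of $V^j$ the approximating surfaces consist of a single sheet, which belongs to the image of exactly one connected component of $\Sigma^j\cap B$; by connectedness of $C$ this component lies either entirely in $\hat\Sigma^j$ or entirely in its complement, so $\hat V^j$ is the multiplicity-one varifold of a union of connected components of $V^j$, and $\partial\hat V^j\setminus\partial M=\partial\hat\Sigma^j\setminus\partial M$ because $\Phi_k^j$ fixes $\partial B$. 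For the last assertion of (iii), diffeomorphisms of $M$ map $\partial M$ onto $\partial M$, so $\hat\Sigma^j\cap\partial M=\emptyset$ implies $\Phi_k^j(1,\hat\Sigma^j)\cap\partial M=\emptyset$ for every $k$, and then boundary regularity (smooth convergence up to $\partial M$) gives $\hat V^j\cap\partial M=\emptyset$. Finally (iv) follows by combining (ii) and (iii): away from the finitely many singular points, $V^j$ has exactly $m_i$ sheets over $\Gamma_i$, and by connectedness of $\Gamma_i$ the number of those lying in $\hat V^j$ is locally constant, hence equal to some $\hat m_i\in\{0,\ldots,m_i\}$; passing to a subsequence along which all these counts stabilize yields $\hat V^j\to\sum_{i=1}^\compgamma\hat m_i\Gamma_i$ with the stated smooth convergence.

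\emph{Main obstacle.} The crux is Part~(i): showing that the area-minimizing limit $V^j$ of $G$-equivariant isotopic competitors is a \emph{smooth}, properly embedded, $G$-stable minimal surface with the correct behaviour at once in the interior, along the free boundary $\partial M$, along the fixed curve $\Sigma^j\cap\partial B$, and at the singular locus of the $G$-action, and that it is attained with multiplicity one. This is the technically heaviest ingredient, combining the replacement regularity of the min-max literature with its free boundary and equivariant refinements; since these are available (notably in \cite{Franz2022}), the genuinely new work here is the careful bookkeeping of $G$-equivariance and of the two kinds of boundary, together with the ``no small minimal surfaces'' input needed to pin down the multiplicity.
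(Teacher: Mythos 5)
Parts (i) and (iv) of your proposal follow essentially the paper's route: (i) is delegated to the equivariant free boundary replacement theory (the paper cites \cite{Franz2022}*{Theorem~13.4.3}), and (iv) combines (ii) and (iii) exactly as the paper does. For the first claim of (ii) you take a different path: the paper chooses a diagonal sequence $\Phi^j_{k_j}(1,\Sigma^j)$ within varifold distance $1/j$ of $V^j$ and invokes \cite{DeLellisPellandini2010}*{Lemma~3.7}, whereas you compare masses and appeal to unique continuation. Your route is salvageable, but it silently uses the regularity of the subsequential limit of $\{V^j\}_j$ (needed to run unique continuation) and an argument that the multiplicities, not only the supports, of $\Gamma'$ and $\Gamma$ agree; the citation of the De~Lellis--Pellandini lemma is the cleaner and intended path.

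The genuine gap is in part (iii). You assert that "by part (i), the convergence $\Phi_k^j(1,\Sigma^j)\to V^j$ is smooth away from finitely many points", and your single-sheet, locally-constant-component argument rests entirely on this. But part (i) only provides convergence in the sense of varifolds as $k\to\infty$: the competitors $\Phi_k^j(1,\Sigma^j)$ are isotopic images of $\Sigma^j$, neither minimal nor stable, so no curvature estimates and no graphical convergence to $V^j$ are available (smoothness is a property of the limit $V^j$, not of the convergence). A priori the varifold limits of $\Phi_k^j(1,\hat\Sigma^j)$ and of $\Phi_k^j(1,\Sigma^j\setminus\hat\Sigma^j)$ could each deposit a fraction of the mass on the same connected component $\Delta_i$ of $V^j$, or mass could be lost in the limit. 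This is precisely what the paper's currents argument (following \cite{DeLellisPellandini2010}*{Lemma~9.1}) is designed to exclude: one passes to the limit simultaneously in the sense of varifolds and of currents, writes the current limit as $T=\sum_i h_i[[\Delta_i]]$ via the constancy theorem with $h_i\in\{-1,0,1\}$ forced by $\nm{T}\le\nm{V}\le\nm{V^j}$, uses the prescribed boundary on $\partial B$ to get $h_i\in\{0,1\}$ and $h_i+h_i'=1$ for the complementary piece, and concludes $\nm{V}=\nm{T}$, i.e.\ no cancellation or overlap. The same defect affects your proof of the last assertion of (iii): varifold limits of surfaces disjoint from $\partial M$ need not be disjoint from $\partial M$ (the surfaces may simply approach the boundary), so "smooth convergence up to $\partial M$" cannot be invoked for the $k\to\infty$ limit; the paper instead reads this off from the fact that $\partial T$ is the current limit of the boundaries $\partial\hat\Sigma^j$.
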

 
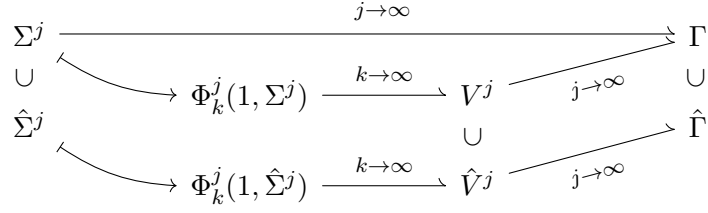
\begin{figure}%
\centering
\pgfmathsetmacro{\winkel}{15}
\begin{tikzpicture}[line cap=round,line join=round,every node/.style={anchor=base,inner sep=5pt}]
\path(0,1.2)node(sigma){$\Sigma^j$} 
++(-\winkel:3)node(phi){$\Phi_k^j(1,\Sigma^j)$} 
++(-0:3)node(vj){$V^j$} 
++(\winkel:3)node(gamma){$\Gamma$}
;
\path(0,0)node(sigmahat){$\hat\Sigma^j$} 
++(-\winkel:3)node(phihat){$\Phi_k^j(1,\hat\Sigma^j)$} 
++(-0:3)node(vjhat){$\hat V^j$} 
++(\winkel:3)node(gammahat){$\hat\Gamma$}
; 
\path(sigma)-|(gamma.west)coordinate[midway](gammaW)
(phi)-|(vj.west)coordinate[midway](vjW)
(phihat)-|(vjhat.west)coordinate[midway](vjhatW); 
\draw[arrows={->[harpoon,length=1mm]}](sigma)--(gammaW)  node[pos=0.525,above]{$\scriptstyle j\to\infty$};
\draw[arrows={->[harpoon,length=1mm]}](vj)   --(gamma)   node[midway,below,sloped]{$\scriptstyle j\to\infty$};
\draw[arrows={->[harpoon,length=1mm]}](vjhat)--(gammahat)node[midway,below,sloped]{$\scriptstyle j\to\infty$};
\draw[arrows={->[harpoon,length=1mm]}](phi)   --(vjW)    node[midway,above]{$\scriptstyle k\to\infty$};
\draw[arrows={->[harpoon,length=1mm]}](phihat)--(vjhatW) node[midway,above]{$\scriptstyle k\to\infty$};
\draw[|->](sigma)to[bend right=\winkel](phi.west);
\draw[|->](sigmahat)to[bend right=\winkel](phihat.west);
\path(sigmahat)--(sigma)node[midway,sloped,above=-.6em]{$\subset$};
\path(vjhat)--(vj)node[midway,sloped,above=-.6em]{$\subset$};
\path(gammahat)--(gamma)node[midway,sloped,above=-.7em]{$\subset$};
\end{tikzpicture}
\caption{Diagram of the varifold convergence in a small ball $B$ described in Lemma~\ref{lem:LocalPictureConv}.}
\label{fig:scheme_local}
\end{figure}

\begin{remark}
In statement \ref{lpc:conncompconv} we do not make any claims about the convergence of $\{\hat\Sigma^j\}_{j\in\N}$. 
It is unclear whether the sequence $\{\hat\Sigma^j\}_{j\in\N}$ is still $G$-almost minimizing in $B$, hence we cannot apply the same arguments as for $\{\Sigma^j\}_{j\in\N}$.

By smooth convergence away from a finite set $\mathcal{P}$ we mean that around every point in $M\setminus\mathcal{P}$ there exists a ball where (with respect to some suitable coordinate chart) the convergence is smooth and graphical, possibly with multiplicity if specified (as e.\,g.~in \ref{lpc:conncompconv}).
\end{remark}

\begin{proof}
\begin{description}[wide]
\item[\ref{lpc:vj}] 
Standard compactness arguments yield that, given any $j\in\N$, a subsequence of $\{\Phi^{j}_{k}(1,\Sigma^j)\}_{k\in\N}$ converges in the sense of varifolds to some $V^j$. 
In $B$, the limit $V^j$ is induced by a smooth, $G$-stable minimal surface with multiplicity one
which satisfies the desired boundary conditions by \cite[Theorem~13.4.3]{Franz2022}. 
Moreover, by the same theorem, its genus and area are bounded uniformly with respect to $j$. 
Note that $V^j$ is properly embedded in $B$ because it satisfies the boundary condition $\partial V^j\setminus\partial M=\Sigma^j\cap\partial B\setminus\partial M$ and cannot have interior touching points on $\partial M$ because $\partial M$ is assumed to be strictly mean convex. 
 
\item[\ref{lpc:vjconv}] We may choose a sequence of indices $k_j\in\N$ such that the distance between $\Phi^j_{k_j}(1,\Sigma^j)$ and $V^j$ is less than $1/j$ (with respect to a metric inducing the varifold convergence, 
see \cite[pp.\,66]{Pitts1981} or \cite[pp.\,703]{MarquesNeves2014}). 
Since the initial sequence $\{\Sigma^j\}_{j\in\N}$ converges to $\Gamma=\sum_{i=1}^\compgamma m_i\Gamma_i$ in the sense of varifolds, 
\cite[Lemma~3.7]{DeLellisPellandini2010} implies that $\{\Phi^j_{k_j}(1,\Sigma^j)\}_{j\in\N}$, and thus $\{V^j\}_{j\in\N}$, also converge in the sense of varifolds to $\Gamma$. 

As observed in the proof of part \ref{lpc:vj}, genus and area of $V^j$ are bounded uniformly with respect to $j$.  
Ilmanen's localized Gauss-Bonnet estimate (see \cite[Lecture~3]{Ilmanen1998}) then implies uniform integral bounds on the squared norm of the second fundamental form of $V^j$ in the ball $B'\subset B$ with the same center as $B$ but only half the radius. 
Consequently, a subsequence of $\{V^j\}_{j\in\N}$ converges smoothly away from finitely many points where curvature may concentrate. 
These points lie on the singular locus of the action of $G$, because $G$-stability of $V^j$ implies curvature estimates in $B'$ away from the singular locus.  

\item[\ref{lpc:conncomp}]
Following the proof of \cite[Lemma~9.1]{DeLellisPellandini2010}, 
the idea is that the sequence $\{\Phi^{j}_k(1,\Sigma^j)\}_{k\in\N}$ converges in the sense of varifolds with multiplicity one, and this has further implications. 
In particular, the convergence is consistent with the convergence in the sense of currents, wherein  the behaviour of the boundary is well-defined.

By standard varifold and currents compactness, a subsequence of $\{\Phi^{j}_k(1,\hat\Sigma^j)\}_{k\in\N}$ converges in $B$ to a varifold $\hat V^j=V$ in the sense of varifolds and to an integer rectifiable current $T$ in the sense of currents. 
Observe that the boundary $\partial T$ of $T$ is the limit in the sense of currents as $k\to\infty$ of the current induced by $\partial \Phi^j_k(1,\hat\Sigma^j)$ (with positive orientation). 
Thus, $\partial T\setminus \partial M$ coincides with the current induced by $\partial\hat\Sigma^j\setminus \partial M$ 
because $\partial \Phi^j_k(1,\hat\Sigma^j)\setminus \partial M=\partial \hat\Sigma^j\setminus \partial M$ for all $k$.  

Denoting by $\nm{V}$ and $\nm{T}$ the measures on $B$ induced by $V$ and $T$, we have that
\begin{equation}\label{eq:measineq}
\nm{T}\le \nm{V}\le \nm{V^j} = \sum_{i=1}^L\hsd^2\mres \Delta_i,
\end{equation}
where $\Delta_i$ are the connected components of the surface $V^j$. Since $\partial T$ and $\partial \Delta_i$ lie on $\partial B$, there exist integers $h_1,\ldots,h_L$ (as in \cite[Lemma~9.1]{DeLellisPellandini2010}) such that
\[
T=\sum_{i=1}^L h_i [[ \Delta_i ]],
\]
where $[[ \Delta_i ]]$ is the current induced by $\Delta_i$ (with positive orientation).
In particular, by \eqref{eq:measineq}, we have that $h_i\in \{-1,0,1\}$.
Note that each $\Delta_i$ has nontrivial intersection with $\partial B\setminus\partial M$, provided that $B$ has sufficiently small radius such that it does not contain any closed minimal surfaces. 
As a result, since $\partial T\setminus\partial M=[[\partial\hat\Sigma^j\setminus\partial M]]$, we have that $h_i\in\{0,1\}$ (here, we are using that the currents induced by both $\Delta_i$ and $\partial\hat\Sigma^j$ are chosen with positive orientation), and $h_i=1$ if and only if $\partial\Delta_i\setminus\partial M\subset \partial \hat\Sigma^j$.

Let $V'$ and $T'$ be the limits in the sense of varifolds and in the sense of currents, respectively, of $\{\Phi^{j}_k(1,\Sigma^j\setminus\hat\Sigma^j)\}_{k\in\N}$.
Arguing the same as before we have that $T'=\sum_{i=1}^L h_i'
[[\Delta_i]]$ for some $h_i'\in\{0,1\}$.
By construction, $\{\Phi^{j}_k(1,\Sigma^j)\}_{k\in\N}$ converges in the sense of currents to $T+T'=\sum_{i=1}^L(h_i+h_i')[[\Delta_i]]$. Moreover, since $\partial V^j\setminus \partial M =\Sigma^j\cap B\setminus\partial M$ by part \ref{lpc:vj}, we have 
\[
\sum_{i=1}^L[[\partial \Delta_i\setminus\partial M]]
=[[\Sigma^j\cap B \setminus\partial M]] 
=\partial(T+T')\setminus\partial M
=\sum_{i=1}^L(h_i+h_i')[[\partial\Delta_i\setminus\partial M]],
\]
which implies that $h_i+h_i'=1$ for all $i=1,\ldots,L$, and therefore $\nm{V}=\nm{T}$ and $\nm{V'}=\nm{T'}$. 
This proves that $\hat V^j=V$ is the varifold induced by the union of some of the connected components of $V^j$. 
Moreover, since $\partial T\setminus \partial M = [[\partial\hat\Sigma^j\setminus\partial M]]$ and $\nm{T}=\nm{V} = \nm{\hat V^j}$, we have that $\hat V^j\cap\partial M\not=\emptyset$ implies $\hat\Sigma^j\cap \partial M\not=\emptyset$.

\item[\ref{lpc:conncompconv}]
By \ref{lpc:conncomp}, $\hat V^j$ is the union of some of the connected components of $V^j$ and $\{V^j\}_{j\in\N}$ converges to $\Gamma$ as stated in \ref{lpc:vjconv}. 
On the one hand, by standard varifold compactness, a subsequence of $\{\hat V^j\}_{j\in\N}$ converges in the sense of varifolds in $B$ to a varifold $\hat\Gamma$. 
On the other hand, given any ball $B'$ that is concentric with $B$ but has smaller radius, statement \ref{lpc:vjconv} implies that a further subsequence of $\{\hat V^j\}_j$ converges smoothly in $B'$ away from finitely many points to $\sum_{i=1}^\compgamma\hat m_i\Gamma_i$ for some $\hat m_i\in\{0,\ldots, m_i\}$. 
In particular, $\hat\Gamma=\sum_{i=1}^\compgamma\hat m_i\Gamma_i$ in $B'$. 
Therefore, the multiplicities $\hat m_i$ do not depend on the choice of $B'$ and the claim follows. 
\qedhere
\end{description}
\end{proof}

Below, we prove the tree lifting lemma following the arguments in \cite[§\,4]{DeLellisPellandini2010}: the general idea is to lift the edges locally in small balls and to show that this can be done ``consistently'' in different balls.  
A new difficulty occurs along edges with leaves on $\partial M$. 
Claim 1 in our proof deals with the transition from an interior point to a point possibly on the boundary and is analogous to \cite[Lemma~4.1]{DeLellisPellandini2010} about the ``continuation of the leaves'' (not to be confused with the leaves of a tree).

\begin{proof}[Proof of Lemma~\ref{lem:BoundarySimonLemma}]
Let $r\colon M \to\interval{0,\infty}$ be the $G$-invariant function from Proposition \ref{prop:SmoothLimitAlmMin}. 
By compactness of $M$, there exists a finite set $C\subset M$ such that $\{B_{r(z)/2} (z)\st z\in C\}$ is a finite covering of $M$. 
Given any open ball $B\subset M\setminus C$ with radius $r<\min_{z\in C}r(z)/2$ 
there exists some $z\in C$ such that $B\subset B_{r(z)}(z)\setminus\{z\}$. 
The $G$-almost minimality in annuli in $\mathcal{AN}_{r(z)}(z)$ 
then implies that $\{\Sigma^j\}_{j\in\N}$ is $G$-almost minimizing in $B$. 

Let $T$ be a properly embedded, rooted tree in $\Gamma_i$, with vertices 
$v_0,\ldots,v_n\in\Gamma_i$, edges $\gamma_1,\ldots,\gamma_n\subset\Gamma_i$ and leaves 
$\{\ell_1,\ldots,\ell_q\}=\{v_1,\ldots,v_n\}\cap\partial\Gamma_i$.  
In particular, every edge has at most one leaf. 
Up to a slight perturbation, we may assume that the edges of $T$ do not contain any points in $C$. 
After perturbing the tree slightly near its leaves, we may choose $0<\rho\leq\varepsilon_0$ such that 
for every edge $\gamma$ containing a leaf $\ell$ the segment $\gamma\cap B_\rho(\ell)$ is a geodesic meeting $\partial M$ orthogonally, and 
\begin{align}
\label{eqn:rho}
\dist_M\biggl(
\Bigl(\bigcup_{k=1}^{n}\gamma_k\Bigr)	
\setminus \Bigl(\bigcup_{k=1}^{q}B_\rho(\ell_k)\Bigr),\partial M\biggr)\geq\rho, 
\end{align}
and such that for every edge $\gamma$ there exists a finite set of points $x_0,\ldots,x_N$ on $\gamma$ (labeled consecutively, where $N$ may depend on $\gamma$) with the following properties (cf Figure \ref{fig:treelifting}). 
\begin{enumerate}[label={\normalfont(\alph*)}]
\item
$x_0$ and $x_N$ are vertices, i.\,e. the endpoints of the edge $\gamma$.  
\item 
Denoting the geodesic segment in $M$ between $x_\alpha$ and $x_{\alpha+1}$ by $[x_\alpha,x_{\alpha+1}]$, there is a homotopy between the curves $\gamma$ and $\sum_{\alpha=0}^{N-1}[x_\alpha,x_{\alpha+1}]$ in $U_\varepsilon\Gamma_i$ fixing their endpoints.
\item 
The balls $B^{\alpha}\vcentcolon=B_\rho(x_\alpha)$ are pairwise disjoint and $B^{\alpha}\cap\partial M=\emptyset$ for all $\alpha\in\{0,\ldots,N-1\}$.
\item 
$B^\alpha\cup B^{\alpha+1}$ is contained in a ball $B^{\alpha,\alpha+1}$ of radius $3\rho$ 
which satisfies $B^{\alpha,\alpha+1}\cap C=\emptyset$ so that $\Sigma^j$ is $(G,\delta_j,\epsilon_j)$-almost minimizing in $B^{\alpha,\alpha+1}$ for some $\delta_j,\epsilon_j$ with $\epsilon_j\to0$.
\item\label{ab:transverse} $\partial B^\alpha$ intersects $\Sigma^j$ transversely for all $\alpha\in\{0,\ldots,N\}$ and all $j\in\N$.
\end{enumerate}

\begin{figure}%
\centering
\pgfmathsetmacro{\Number}{6}
\pgfmathsetmacro{\Numberminusone}{\Number-1}
\pgfmathsetmacro{\rhopar}{0.7}
\pgfmathsetmacro{\tA}{1-1/\Number}
\pgfmathsetmacro{\tB}{1-2/\Number}
\begin{tikzpicture}[line cap=round,line join=round,semithick] 
\begin{scope}
\clip (0,3*\rhopar)coordinate(A)to[bend left=10]coordinate[midway](xN)(0,-3*\rhopar)coordinate(B)-|(-3-3*\rhopar*\Number,0)|-cycle;
\draw[FarbeB](xN)circle(\rhopar);
\pgfresetboundingbox
\end{scope}
\draw[thin](A)to[bend left=10](B)node[above left]{$\partial M$};
\begin{scope}
\clip(xN)circle(\rhopar+0.01);
\draw[FarbeB](A)to[bend left=10](B);
\pgfresetboundingbox
\end{scope}
\path(A)to[bend left=10](B);
\draw[thick](xN)to[out=180,in=15]
node foreach[count=\i] \t in {\tA,\tB,...,-0.00001} [pos=\t](x\i){}
++({-3*\rhopar*\Number},0)coordinate(x0);
\draw[thick] (x0)to[bend right=10]node[pos=1,left,sloped]{$\cdots$}++(150:2.5)node[below left]{$T$};
\draw[thick] (x0)to[bend right=10]node[pos=1,left,sloped]{$\cdots$}++(230:1.5);
\foreach\i in {0,...,\Numberminusone}{
\draw plot[bullet](x\i);
\draw[FarbeB](x\i)circle(\rhopar);
}
\draw[FarbeB](xN)++(177:\rhopar)arc(177:183:\rhopar);
\draw(x0)node[anchor=120,circle,inner sep=1pt]{$x_0$}
plot[bullet](xN)node[anchor=45,circle,inner sep=1pt]{$x_N$}
(x3)node[below]{$x_\alpha$}++(0,\rhopar)node[below,FarbeB]{$B^{\alpha}$}
(x4)node[below]{$x_{\alpha+1}$}++(0,\rhopar)node[below,FarbeB]{$B^{\alpha+1}$}
;
\draw[FarbeB,latex-latex](x1.center)--++(-60:\rhopar)node[below=-1pt,midway,sloped]{$\scriptstyle\rho$};
\draw[FarbeB]($0.5*(x3)+0.5*(x4)+(0,-0.37)$)coordinate(c)circle(3*\rhopar)++(0,-3*\rhopar)node[above]{$B^{\alpha,\alpha+1}$};
\draw[FarbeB,latex-latex](c)--++(-60:3*\rhopar)node[midway,below=-1pt,sloped]{$\scriptstyle3\rho$};
\end{tikzpicture}
\caption{Small balls $B^{\alpha}\vcentcolon=B_\rho(x_\alpha)\subset M$ around points $x_0,\ldots,x_N$ on the edge $\gamma$ of the tree $T$. }%
\label{fig:treelifting}%
\end{figure}
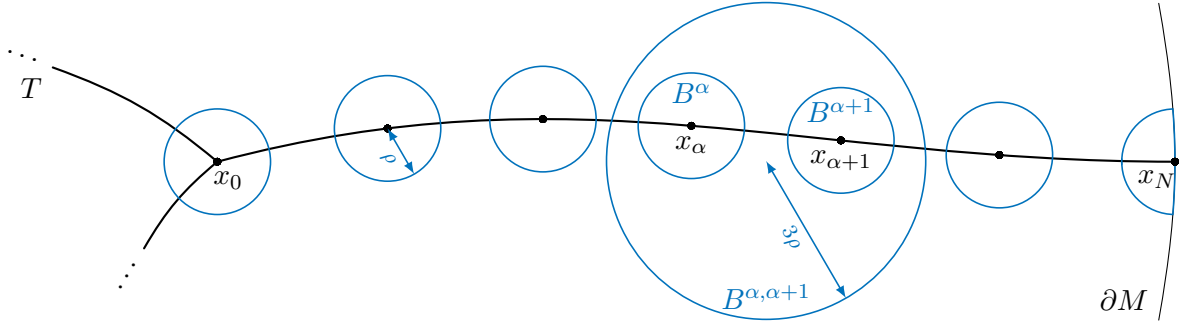 

We choose an interior vertex $v=x_0$ as root which has an edge $\gamma$ containing a leaf $\ell=x_N$ and focus our analysis on this edge for now. 
Given $j\in\N$ and $\alpha\in\{0,\ldots,N\}$, let $\{\Phi_k^{j,\alpha}\}_{k\in\N}\subset\Is_G^{\delta_j}(B^\alpha,\Sigma^j)$ be a minimizing sequence of isotopies in the sense of equation \eqref{eq:DefPhi}. 
In particular, $\Phi_k^{j,\alpha}$ coincides with the identity on $M\setminus B^{\alpha}$. 
In what follows, we assume $\alpha\leq N-1$, so that $B^{\alpha}$ is in the interior of $M$. 
Since $B^\alpha\cap B^{\alpha+1}=\emptyset$, the map $\Phi_k^{j,\alpha,\alpha+1}\colon[0,1]\times M\to M$ given by 
\begin{align}\label{eqn:combined_isotopy}
\Phi_k^{j,\alpha,\alpha+1}(t,x)&=\left\{\begin{aligned}
\Phi_k^{j,\alpha}\bigl(\min\{2t,1\},x\bigr) &&& \text{ if $x\in B^\alpha$,} \\
\Phi_k^{j,\alpha+1}\bigl(\max\{2t-1,0\},x\bigr) &&& \text{ if $x\in B^{\alpha+1}$,} \\
x\hphantom{)} &&& \text{ otherwise}
\end{aligned}	\right.
\end{align}
is in $\Is_G^{\delta_j}(B^{\alpha,\alpha+1}, \Sigma^j)$. 
By Lemma~\ref{lem:LocalPictureConv}~\ref{lpc:vj} applied in $B^{\alpha}$, a subsequence of $\{\Phi_k^{j,\alpha,\alpha+1}(1,\Sigma^j)\}_{k\in\N}$ converges in the sense of varifolds in $B^{\alpha}$ to a smooth, $G$-stable minimal surface $V^{j,\alpha}$ in $B^{\alpha}$. 
By the same argument, a further subsequence converges in the sense of varifolds in $B^{\alpha+1}$ to a smooth, $G$-stable minimal surface $V^{j,\alpha+1}$ in $B^{\alpha+1}$ 
which satisfies the free boundary condition on $B^{\alpha+1}\cap \partial M$ if the latter is nonempty, i.\,e. in the case $\alpha=N-1$. 
By standard varifold compactness, an even further subsequence of $\{\Phi_k^{j,\alpha,\alpha+1}(1,\Sigma^j)\}_{k}$ converges in the sense of varifolds in $B^{\alpha,\alpha+1}$ to some $V^{j,\alpha,\alpha+1}$ which coincides with $V^{j,\alpha}$ in $B^{\alpha}$ and with $V^{j,\alpha+1}$ in $B^{\alpha+1}$.

Given any connected component $\hat\Sigma^{j,\alpha}$ of $\Sigma^j\cap B^\alpha$, Lemma~\ref{lem:LocalPictureConv}~\ref{lpc:conncomp} implies that a further subsequence of $\{\Phi_k^{j,\alpha,\alpha+1}(1,\hat\Sigma^{j,\alpha})\}_{k}$ converges in the sense of varifolds to the union $\hat V^{j,\alpha}$ of some of the connected components of $V^{j,\alpha}$.
By Lemma~\ref{lem:LocalPictureConv}~\ref{lpc:vjconv}, $\{V^{j,\alpha}\}_{j\in\N}$ converges in the sense of varifolds in $B^\alpha$ to $m_i\Gamma_i$ and in $B_{\rho/2}(x_\alpha)$ the convergence in smooth away from finitely many points in the singular locus of the group action. 
In particular, the surface $\hat V^{j,\alpha}\cap B_{\rho/2}(x_\alpha)$ is arbitrarily close, smoothly away from finitely many points, to $\hat m^{j,\alpha}\in\{0,\ldots,m_i\}$ copies of $\Gamma_i\cap B_{\rho/2}(x_\alpha)$ provided that $j$ is sufficiently large.

\emph{Claim 0.} In the notation of the previous paragraph, it is possible to choose the connected component $\hat\Sigma^{j,\alpha}$ such that $\hat m^{j,\alpha}\geq1$ provided that $j$ is sufficiently large.

\begin{proof}
Since $V^{j,\alpha}\cap B_{\rho/2}(x_\alpha)$ is arbitrarily close (for $j$ large) to $m_i\Gamma_i\cap B_{\rho/2}(x_\alpha)$ smoothly away from a finite set $\mathcal{P}$, each of its connected components is either arbitrarily close to a positive number of copies of $\Gamma_i\cap B_{\rho/2}(x_\alpha)$ (smoothly away from $\mathcal{P}$) or it is contained in a small neighbourhood of $\mathcal{P}$.
Hence, given $\hat\Sigma^{j,\alpha}$ and the corresponding $\hat V^{j,\alpha}\subset V^{j,\alpha}$ as above, we have that $\hat V^{j,\alpha}\cap B_{\rho/2}(x_\alpha)$ is either close to $\hat m^{j,\alpha}\geq 1$ copies of $\Gamma_i\cap B_{\rho/2}(x_\alpha)$ (smoothly away from $\mathcal{P}$) or it is contained in a small neighbourhood of $\mathcal{P}$. 

Assume by contradiction that the latter case holds for every choice of $\hat\Sigma^{j,\alpha}$. 
Then $V^{j,\alpha}\cap B_{\rho/2}(x_\alpha)$ is also contained in a small neighbourhood of $\mathcal{P}$, being the union of $\hat V^{j,\alpha}\cap B_{\rho/2}(x_\alpha)$ over all the possible choices of $\hat\Sigma^{j,\alpha}$.
This contradicts the fact that $V^{j,\alpha}\cap B_{\rho/2}(x_\alpha)$ is close to $m_i\Gamma_i\cap B_{\rho/2}(x_\alpha)$, where $m_i\geq1$ and the claim is proved. 
\end{proof}

Let $\hat\Sigma^{j,\alpha}$ be as in Claim 0 and let $\hat\Sigma^{j,\alpha,\alpha+1}$ be the connected component of $\Sigma^j\cap B^{\alpha,\alpha+1}$ containing $\hat\Sigma^{j,\alpha}$.  
By Lemma~\ref{lem:LocalPictureConv}~\ref{lpc:conncomp}, a further subsequence of $\{\Phi_k^{j,\alpha,\alpha+1}(1,\hat\Sigma^{j,\alpha,\alpha+1})\}_{k}$ converges in the sense of varifolds in $B^{\alpha+1}$ to the union $\hat V^{j,\alpha+1}$ of some of the connected components of $V^{j,\alpha+1}$ and an even further subsequence has a varifold limit $\hat V^{j,\alpha,\alpha+1}$ in $B^{\alpha,\alpha+1}$ which coincides with $\hat V^{j,\alpha}$ in $B^{\alpha}$ and with $\hat V^{j,\alpha+1}$ in $B^{\alpha+1}$.

\emph{Claim 1.} In $B_{\rho/2}(x_{\alpha+1})$ the surface $\hat V^{j,\alpha+1}$ is arbitrarily close (smoothly away from finitely many points on the singular locus of the group action) to a positive number of copies of $\Gamma_i\cap B_{\rho/2}(x_{\alpha+1})$ provided that $j$ is sufficiently large.

\begin{proof}
By Lemma~\ref{lem:LocalPictureConv}~\ref{lpc:conncompconv}, a subsequence of $\{\hat V^{j,\alpha+1}\}_{j\in\N}$ converges in the sense of varifolds in $B^{\alpha+1}$ to an integer multiple $\hat m^{\alpha+1}\Gamma_i$ of $\Gamma_i$, and in $B_{\rho/2}(x_{\alpha+1})$, away from a finite set $\mathcal{P}$ of concentration points on the singular locus, the convergence is smooth. 
We want to prove that $\hat m^{\alpha+1}\ge 1$.
Towards a contradiction, assume that $\hat m^{\alpha+1}=0$. 
Then, $\hat V^{j,\alpha+1}\cap B_{\rho/2}(x_{\alpha+1})$ is contained in an arbitrarily small neighbourhood of $\mathcal{P}$ provided that $j$ is sufficiently large. 
Up to a further subsequence we may extract a varifold limit $W$ of $\{\hat V^{j,\alpha,\alpha+1}\}_{j}$ introduced in the previous paragraph.  
On the one hand, 
\begin{align}\label{eqn:W(alpha+1)}
\nm{W}\bigl(B_{\rho/2}(x_{\alpha+1})\bigr)&=0,  
\intertext{because $\hat V^{j,\alpha,\alpha+1}$ coincides with $\hat V^{j,\alpha+1}$ in $B^{\alpha+1}$.
On the other hand, Claim 0 implies }
\label{eqn:W(alpha)}
\nm{W}\bigl(B_{\rho/2}(x_\alpha)\bigr)&\geq\hsd^2\bigl(\Gamma_i\cap B_{\rho/2}(x_\alpha)\bigr). 
\end{align}
For all $j$ in the subsequence, let $k_j\in\N$ be sufficiently large such that 
\[
\bigl\{\Phi_{k_j}^{j,\alpha,\alpha+1}(1,\hat \Sigma^{j,\alpha,\alpha+1})\bigr\}_{j}
\]
 also converges to $W$.
By \cite[Lemma~3.7]{DeLellisPellandini2010}, $\{\Phi_{k_j}^{j,\alpha,\alpha+1}(1,\Sigma^j)\}_{j\in\N}$ converges in the sense of varifolds to $m_i\Gamma_i$ in $B^{\alpha,\alpha+1}$. 
Hence, $W\le m_i\Gamma_i\cap B^{\alpha,\alpha+1}$ as varifolds. 
Following \cite[(4.1--5)]{DeLellisPellandini2010}, we obtain $\nm{W}(\partial B_\tau(z)) = 0$ for any ball $B_\tau(z)\subset B^{\alpha,\alpha+1}$, which implies that the function
\(
z\mapsto \nm{W}\bigl(B_{\rho/2}(z)\bigr)
\)
is continuous for $z$ varying in the geodesic segment $[x_\alpha,x_{\alpha+1}]$.  
In view of \eqref{eqn:W(alpha+1)} and \eqref{eqn:W(alpha)}, there exists some $z_\alpha\in[x_\alpha,x_{\alpha+1}]$ such that 
\[
\nm{W}\bigl(B_{\rho/2}(z_\alpha)\bigr)=\frac{1}{2}\hsd^2\bigl(\Gamma_i\cap B_{\rho/2}(z_\alpha)\bigr).
\]
Moreover, $B_{\rho/2}(z_\alpha)\cap\partial M=\emptyset$ because $\nm{W}\bigl(B_{\rho/2}(z)\bigr)=0$ for all $z\in[x_\alpha,x_{\alpha+1}]$ at distance less than $\rho/2$ from $\partial M$, since $W$ vanishes in $B^{\alpha+1}$. 
Here, we also rely on property \eqref{eqn:rho} and on the fact that $\gamma\cap B_\rho(\ell)$ is a geodesic meeting $\partial M$ orthogonally. 
In particular, 
\begin{align}\label{DP:46}
\lim_{j\to\infty} \hsd^2\Bigl(\Phi_{k_j}^{j,\alpha,\alpha+1}(1,\hat\Sigma^{j,\alpha,\alpha+1})\cap B_{\rho/2}(z_\alpha)\Bigr)
&=\frac{1}{2}\hsd^2\bigl(\Gamma_i\cap B_{\rho/2}(z_\alpha)\bigr),
\\
\label{DP:47}
\lim_{j\to\infty} \hsd^2\Bigl(\Phi_{k_j}^{j,\alpha,\alpha+1}(1,\Sigma^j\setminus \hat\Sigma^{j,\alpha,\alpha+1})\cap B_{\rho/2}(z_\alpha)\Bigr) 
&= \Bigl(m_i-\frac 12\Bigr)\hsd^2(\Gamma_i\cap B_{\rho/2}(z_\alpha)).
\end{align}
We many now conclude as in \cite{DeLellisPellandini2010} to reach a contradiction. 
Indeed, equations \eqref{DP:46} and \eqref{DP:47} correspond exactly to \cite[(4.6--7)]{DeLellisPellandini2010} and $B_{\rho/2}(z_\alpha)$ is contained in the interior of $M$. 
The idea is that (after performing a further minimization process as in Lemma~\ref{lem:LocalPictureConv}) the two surfaces $\Phi_{k_j}^{j,\alpha,\alpha+1}(1,\hat\Sigma^{j,\alpha,\alpha+1})\cap B_{\rho/2}(z_\alpha)$ and $\Phi_{k_j}^{j,\alpha,\alpha+1}(1,\Sigma^j\setminus \hat\Sigma^{j,\alpha,\alpha+1})\cap B_{\rho/2}(z_\alpha)$ become close to an integer multiple of $\Gamma_i$ and this contradicts equations \eqref{DP:46} and \eqref{DP:47}. 
The technical details are exactly the same as in \cite{DeLellisPellandini2010} and therefore not repeated here. 
\end{proof}

\emph{Claim 2.} In the case $\alpha=N-1$, the intersection $\hat\Sigma^{j,N}\cap\partial M$ is nonempty provided that $j$ is sufficiently large.

\begin{proof}
The free boundary of $\Gamma_i$ intersects $B_{\rho/2}(x_N)$ because by assumption $x_N$ is a leaf of the properly embedded tree in question. 
In fact, since $\rho$ is small, we may assume that $\partial\Gamma_i\cap B_{\rho/2}(x_N)$ consists of a single boundary segment connecting two points on $\partial B_{\rho/2}(x_N)\cap\partial M$. 
By Claim~1, $\hat V^{j,N}\cap B_{\rho/2}(x_N)$ is arbitrarily close (smoothly away from finitely many points) to a positive number of copies of $\Gamma_i\cap B_{\rho/2}(x_N)$ provided that $j$ is sufficiently large along a subsequence. 
In particular, some boundary segment of $\hat V^{j,N}\cap B_{\rho/2}(x_N)$ is close to the boundary of $\Gamma_i\cap B_{\rho/2}(x_N)$.
Thus, being properly embedded, $\hat V^{j,N}$ intersects $\partial M$. 
Consequently, $\hat\Sigma^{j,N}$ intersects $\partial M$ as well by Lemma~\ref{lem:LocalPictureConv}~\ref{lpc:conncomp}, and the claim follows.
\end{proof}

Given the previous three claims, we now lift the edge $\gamma$, connecting an interior point $v=x_0$ to a leaf $\ell=x_N$ as follows.
We apply Claim 0 for $\alpha=0$ and obtain (for all sufficiently large $j$ in a subsequence) a connected component $\hat\Sigma^{j,0}$ of $\Sigma^j\cap B^0$ such that a subsequence of $\{\Phi_k^{j,0}(1,\hat\Sigma^{j,0})\}_{k\in\N}$ has a nonzero varifold limit in $B_{\rho/2}(x_0)$. We fix a point $y_0$ in $\hat\Sigma^{j,0}\subset\Sigma^j$.
Then we apply Claim~1 iteratively at every subsequent point $x_{\alpha+1}$ with $\alpha=0,\ldots,N-1$ along $\gamma$.
In every step, we obtain a connected component $\hat\Sigma^{j,\alpha+1}$ belonging to the same connected component of $\Sigma^j\cap B^{\alpha,\alpha+1}$ as the previously selected $\hat\Sigma^{j,\alpha}$ such that a subsequence of $\{\Phi_k^{j,\alpha+1}(1,\hat\Sigma^{j,\alpha+1})\}_{k\in\N}$ again has a nonzero varifold limit in $B_{\rho/2}(x_{\alpha+1})$. 
In particular, we may choose a point $y_{\alpha+1}\in\hat\Sigma^{j,\alpha+1}$ and a curve $\gamma^j_{\alpha+1}\subset\Sigma^j\cap B^{\alpha,\alpha+1}$ connecting the previously selected $y_\alpha$ with $y_{\alpha+1}$. 
Note that, thanks to Claim~2, we can choose $y_N\in\hat\Sigma^{j,N}\cap \partial M$.

Hence, defining $\gamma^j$ as the concatenation of the curves $\gamma^j_{0},\ldots,\gamma^j_{N}$, it is straightforward to check that $\gamma^j$ is the desired lift of the edge $\gamma$ (cf.~\cite[pp.\,64]{DeLellisPellandini2010}).
We now repeat the same process iteratively on all the edges of the tree (e.\,g. using breadth-first search) except that at each new starting vertex, the connected component of $\Sigma^j$ and the first point $y_0$ have already been selected when lifting the preceding edge. 
Thereby, the lift of every new edge connects to the previously lifted part of the tree. 
This yields a properly embedded tree in $\Sigma^j$ which is properly homotopic to the initial tree in $U_{\varepsilon}\Gamma_i$, as desired. 
\end{proof}
 
\subsection{Proof of the main results}
 
The proof of our main theorems is based on the following result, for which we recall the notions of genus complexity $\gsum$ and boundary complexity $\bsum$ from Definition \ref{defn:abc} as well as the notation 
$\beta_1$ for the first Betti number (cf.~Appendix \ref{app:homology}).  

\begin{theorem}\label{thm:TopologyOfAlmMinSeq}
Let us assume to be in the setting of Proposition~\ref{prop:SmoothLimitAlmMin}. 
Then, for every sufficiently small $\varepsilon>0$, there exists $J_\varepsilon\in\N$ such that for all $j\geq J_\varepsilon$ there is a $G$-equivariant surface 
\(
\tilde\Sigma^j\subset U_{2\varepsilon}\Gamma 
\)
obtained from $\Sigma^j$ through surgery such that the sequence $\{\tilde\Sigma^j\}_{j\geq J_\varepsilon}$ also converges in the sense of varifolds to $\Gamma$ and we have the following bounds on the topology:
\begin{align*}
\beta_1(\Gamma)&\leq\liminf_{j\to\infty}\beta_1(\tilde\Sigma^j), &
\gsum(\Gamma)&\leq\liminf_{j\to\infty}\gsum(\tilde\Sigma^j), &
\bsum(\Gamma)&\leq\liminf_{j\to\infty}\bsum(\tilde\Sigma^j).  
\end{align*}
\end{theorem}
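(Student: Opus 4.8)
The plan is to first produce the surgered surfaces $\tilde\Sigma^j$ from Proposition~\ref{prop:PerformingSurgery} and then to read off enough of their topology by lifting curves and trees from $\Gamma$ via Simon's lifting lemma (Lemma~\ref{lem:SimonLemma}) and its free boundary counterpart (Lemma~\ref{lem:BoundarySimonLemma}). Fix $\varepsilon>0$ small enough that the neighbourhoods $U_{2\varepsilon}\Gamma_1,\dots,U_{2\varepsilon}\Gamma_\compgamma$ are pairwise disjoint and that there is a smooth retraction $\pi\colon U_{2\varepsilon}\Gamma\to\spt\Gamma$. Proposition~\ref{prop:PerformingSurgery} then provides, for all $j\ge J_\varepsilon$, a $G$-equivariant surface $\tilde\Sigma^j$ obtained from $\Sigma^j$ through surgery with $\tilde\Sigma^j\subset U_{2\varepsilon}\Gamma$ and $\tilde\Sigma^j\cap U_\varepsilon\Gamma=\Sigma^j\cap U_\varepsilon\Gamma$. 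The first observation is that $\{\tilde\Sigma^j\}$ still converges to $\Gamma$ in the sense of varifolds: since $\spt\Gamma\subset U_\varepsilon\Gamma$, varifold convergence of $\{\Sigma^j\}$ forces $\hsd^2(\Sigma^j\setminus U_\varepsilon\Gamma)\to0$, and $\tilde\Sigma^j$ agrees with $\Sigma^j$ on $U_\varepsilon\Gamma$ while carrying no more mass outside it.

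It then remains to establish the three topological lower bounds, and I would reduce to a single limit component. Because $\varepsilon$ is small, $\tilde\Sigma^j$ is the disjoint union of the surfaces $\tilde\Sigma^j_i\vcentcolon=\tilde\Sigma^j\cap U_{2\varepsilon}\Gamma_i$, each a union of connected components of $\tilde\Sigma^j$ hence properly embedded, and $\tilde\Sigma^j_i$ coincides with $\Sigma^j$ on $U_\varepsilon\Gamma_i$. As $\beta_1$, $\gsum$ and $\bsum$ are additive under disjoint unions (Remark~\ref{rem:complexities}), it suffices to prove, for every $i$ and all large $j$, that $\gsum(\Gamma_i)\le\gsum(\tilde\Sigma^j_i)$, $\bsum(\Gamma_i)\le\bsum(\tilde\Sigma^j_i)$ and $\beta_1(\Gamma_i)\le\beta_1(\tilde\Sigma^j_i)$. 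Fix such an $i$ and write $b_i\vcentcolon=\beta_0(\partial\Gamma_i)$.

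Next I would choose inside $\Gamma_i$ a configuration witnessing its complexity: a family of pairwise disjoint simple closed curves realizing the genus complexity (in the orientable case $\genus(\Gamma_i)$ curves whose complement in $\Gamma_i$ is connected and planar, and in the nonorientable case the analogous family, together with one orientation‑reversing curve, as in \cite[\S\,10]{DeLellisPellandini2010}), and — if $\partial\Gamma_i\neq\emptyset$ — a properly embedded rooted tree $T$ in $\Gamma_i$ having exactly one leaf on each of the $b_i$ boundary components of $\Gamma_i$, chosen disjoint from the curves. Lifting each curve by Lemma~\ref{lem:SimonLemma} and $T$ by Lemma~\ref{lem:BoundarySimonLemma}, and using pairwise disjoint $\varepsilon$‑tubular neighbourhoods around them, one obtains for all large $j$ pairwise disjoint lifts contained in $\Sigma^j\cap U_\varepsilon\Gamma_i=\tilde\Sigma^j_i\cap U_\varepsilon\Gamma_i$: simple closed curves, each homotopic in $U_\varepsilon\Gamma_i$ to a positive multiple of its downstairs counterpart, and a properly embedded tree $T^j$ properly homotopic to $T$ in $U_\varepsilon\Gamma_i$. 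Composing with $\pi$ and working with rational coefficients (which neutralises the multiplicities appearing in the lifts), the lifted curve classes are linearly independent in $H_1(\tilde\Sigma^j_i;\mathbb Q)$ with non‑separating union, so that $\gsum(\tilde\Sigma^j_i)\ge\gsum(\Gamma_i)$ exactly as in \cite{DeLellisPellandini2010}. For the tree: since the proper homotopy constrains the leaves to $\partial M$ inside $U_\varepsilon\Gamma_i$, each leaf of $T^j$ lies in the tubular neighbourhood (within $\partial M$) of the boundary circle of $\Gamma_i$ carrying the corresponding leaf of $T$; as these tubes are disjoint for distinct circles and a boundary component of $\tilde\Sigma^j_i$ is connected, the $b_i$ leaves of $T^j$ sit on $b_i$ pairwise distinct boundary components of $\tilde\Sigma^j_i$, all belonging to the single connected component of $\tilde\Sigma^j_i$ met by the connected tree $T^j$; hence $\bsum(\tilde\Sigma^j_i)\ge b_i-1=\bsum(\Gamma_i)$. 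Since the lifted curves and the lifted tree are disjoint, their contributions add, and combining with Corollary~\ref{cor:beta1=2g+b+c} yields $\beta_1(\tilde\Sigma^j_i)\ge\beta_1(\Gamma_i)$. Summing over $i$ finishes the argument.

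The step I expect to be the main obstacle is the last one — assembling the genus lift and the boundary lift into a single coherent count. Concretely one must (i) verify that the lifted tree really reaches $b_i$ \emph{distinct} boundary components of $\tilde\Sigma^j_i$, which is the delicate interplay between the retraction $\pi$, the proper homotopy of $T^j$ and the structure of $U_{2\varepsilon}\Gamma_i\cap\partial M$, in the spirit of Claims~1 and~2 in the proof of Lemma~\ref{lem:BoundarySimonLemma}; (ii) arrange the curve family and the tree to be simultaneously disjoint so that their topological contributions genuinely add; and (iii) in the nonorientable case, track orientability carefully enough in the presence of multiplicity to recover the full first Betti number rather than merely $2\gsum+\bsum$, for which I would follow the refinement in \cite[\S\,10]{DeLellisPellandini2010}. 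By contrast, the genus lift proper is essentially that of \cite{DeLellisPellandini2010}, and the surgery and varifold‑convergence bookkeeping in the first two paragraphs is routine.
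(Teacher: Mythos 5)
Your overall architecture --- surgery via Proposition~\ref{prop:PerformingSurgery}, then lifting closed curves with Lemma~\ref{lem:SimonLemma} and a rooted tree with Lemma~\ref{lem:BoundarySimonLemma} --- is the paper's, and your treatment of the $\bsum$ estimate is essentially identical to the paper's proof. However, the way you assemble the other two estimates has genuine gaps.

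First, your route to $\beta_1$ does not close. You deduce $\beta_1(\Gamma_i)\le\beta_1(\tilde\Sigma^j_i)$ from the $\gsum$ and $\bsum$ bounds via Corollary~\ref{cor:beta1=2g+b+c}, but that corollary reads $\beta_1=2\gsum+\bsum+c$ where $c$ counts the nonorientable components with nonempty boundary, and nothing in your argument bounds $c(\tilde\Sigma^j_i)$ from below when $\Gamma_i$ is nonorientable with boundary (orientable $\Sigma^j$ can converge to such a $\Gamma_i$ with multiplicity two). You flag this under (iii), but the fix is not to ``track orientability'': the paper proves the $\beta_1$ bound directly and independently, by lifting $\beta_1(\Gamma)$ curves that are linearly independent in $H_1(\Gamma)$ and observing that, since $\tilde\Sigma^j\subset U_{2\varepsilon}\Gamma$ and $H_1(U_{2\varepsilon}\Gamma)\cong H_1(\Gamma)$, any integer combination vanishing in $H_1(\tilde\Sigma^j)$ pushes forward to a vanishing combination in $H_1(U_{2\varepsilon}\Gamma)$, forcing all coefficients to be zero. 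This is simpler and orientation-blind, and it makes your point (ii) about arranging the curves and the tree to be simultaneously disjoint unnecessary, since the three estimates are proved separately rather than by adding contributions.

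Second, your $\gsum$ step is underspecified in a way that matters. Simon's lifting lemma produces closed curves homotopic to positive multiples of the originals --- neither simple nor disjoint --- so a ``family of disjoint simple closed curves with non-separating union'' is not what you obtain upstairs, and the geometric definition of genus cannot be applied to the lifts; moreover \cite{DeLellisPellandini2010}*{\S\,10} treats closed surfaces only. With boundary the correct homological surrogate is Corollary~\ref{cor:homology2gsum}, namely $2\gsum(\Sigma)=\rank\bigl(H_1(\Sigma)/\iota(H_1(\partial\Sigma))\bigr)$, and the additional input the paper needs (and you omit) is that every boundary component of $\tilde\Sigma^j$ is homotopic in $U_{2\varepsilon}\Gamma$ to an integer multiple of a boundary component of $\Gamma$, because $U_{2\varepsilon}\Gamma\cap\partial M$ is a union of pairwise disjoint annuli. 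Without that, independence of the lifted classes modulo $\iota(H_1(\partial\tilde\Sigma^j))$ --- which is what actually bounds $\gsum(\tilde\Sigma^j)$ from below --- does not follow from independence in $H_1(\tilde\Sigma^j)$.
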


\begin{proof}
For the purpose of this proof, we do not distinguish between $\Gamma$ and its support, 
i.\,e.~we may regard $\Gamma$ as a smooth, compact, properly embedded, possibly disconnected surface, without taking multiplicity into account. 

Given any $\varepsilon>0$, we apply Proposition \ref{prop:PerformingSurgery} to $\{\Sigma^j\}_{j\in\N}$ obtaining $\tilde\Sigma^j$ from $\Sigma^j$ for all $j\geq J_\varepsilon$ through (equivariant) surgery such that $\tilde\Sigma^j\subset U_{2\varepsilon}\Gamma$ and $\tilde\Sigma^j\cap U_{\varepsilon}\Gamma=\Sigma^j\cap U_{\varepsilon}\Gamma$.
The new sequence $\{\tilde\Sigma^j\}_{j\geq J_\varepsilon}$ also converges to $\Gamma$ in the sense of varifolds. 
Given any $x\in M$, let $\mathrm{An}\in\mathcal{AN}_{r(x)}(x)$. 
If $\mathrm{An}\subset U_{\varepsilon}\Gamma$ then $\{\tilde\Sigma^j\}_{j\geq J_\varepsilon}$ is clearly $G$-almost minimizing in $\mathrm{An}$ because $\tilde\Sigma^j$ coincides with $\Sigma^j$ in $U_{\varepsilon}\Gamma$. 
In $M\setminus U_{\varepsilon}\Gamma$ the sequence $\{\tilde\Sigma^j\}_{j\geq J_\varepsilon}$ converges to zero in the sense of varifolds; hence we can achieve that $\{\tilde\Sigma^j\}_{j\geq J_\varepsilon}$ is $G$-almost minimizing in every $\mathrm{An}\in\mathcal{AN}_{r(x)}(x)$, by possibly reducing $r(x)>0$ depending on $\varepsilon$. 

By \cite[Theorem~2A.1]{Hatcher2002}, every element of the first homology group can be represented by a closed curve. 
Since the first Betti number is the rank of the first homology group, 
there exist $n\vcentcolon=\beta_1(\Gamma)$ closed curves $\gamma_1,\ldots,\gamma_n$ in $\Gamma$ which are linearly independent in the $\Z$-module $H_1(\Gamma)$. 
Assuming $2\varepsilon\leq\varepsilon_0$, Simon's lifting lemma, Lemma~\ref{lem:SimonLemma}, yields closed curves $\gamma_1^j,\ldots,\gamma_n^j\subset\tilde\Sigma^j\cap U_{2\varepsilon}\Gamma$ for any sufficiently large $j\in\N$, such that $\gamma_\ell^j$ is homotopic to a multiple of $\gamma_\ell$ in $U_{2\varepsilon}\Gamma$ for all $\ell\in\{1,\ldots,n\}$.  
In particular, $\gamma_1^j,\ldots,\gamma_n^j$ are linearly independent in the $\Z$-module $H_1(U_{2\varepsilon}\Gamma)\cong H_1(\Gamma)$. 
Suppose that $\alpha^j\vcentcolon=k_1\gamma_1^j+\ldots+k_{n}\gamma_n^j$ vanishes in the $\Z$-module $H_1(\tilde\Sigma^j)$ for some $k_1,\ldots, k_{n}\in\Z$. 
Then $\alpha^j$ also vanishes in the $\Z$-module $H_1(U_{2\varepsilon}\Gamma)$ since $\tilde\Sigma^j\subset U_{2\varepsilon}\Gamma$ and we obtain $k_1=\ldots=k_{n}=0$.  
Consequently, $H_1(\tilde\Sigma^j)$ has at least rank $n$, which concludes the proof of the inequality for the first Betti number $\beta_1$.

By Corollary~\ref{cor:homology2gsum}, the $\Z$-module $H_1(\Gamma)/\iota(H_1(\partial\Gamma))$ has rank $m\vcentcolon=2\gsum(\Gamma)$. 
Hence there exist closed curves $\gamma_1,\ldots,\gamma_m$ in $\Gamma$ which are linearly independent in the $\Z$-module $H_1(\Gamma)/\iota(H_1(\partial\Gamma))$ and as before, Simon's lifting lemma yields closed curves $\gamma_1^j,\ldots,\gamma_m^j\subset\tilde\Sigma^j\cap U_{2\varepsilon}\Gamma$ for any sufficiently large $j\in\N$, such that $\gamma_\ell^j$ is homotopic to a multiple of $\gamma_\ell$ in $U_{2\varepsilon}\Gamma$ for all $\ell\in\{1,\ldots,m\}$.  
Suppose that there exist $k_1,\ldots,k_{m}\in\Z$ such that 
$\alpha^j\vcentcolon=k_1\gamma_1^j+\ldots+k_{m}\gamma_m^j$ vanishes in the $\Z$-module $H_1(\tilde\Sigma^j)/\iota(H_1(\partial\tilde\Sigma^j))$. 
Then $\alpha^j$ can be represented by an integer linear combination of some boundary components of $\tilde\Sigma^j$. 
Since $\partial\tilde\Sigma^j\subset U_{2\varepsilon}\Gamma\cap\partial M$ and since  $U_{2\varepsilon}\Gamma\cap\partial M$ is a union of pairwise disjoint annuli, every boundary component of $\tilde\Sigma^j$ is homotopic to an integer multiple of some boundary component of $\Gamma$. 
Therefore, $k_1\gamma_1+\ldots+k_{m}\gamma_m$ vanishes in $H_1(\Gamma)/\iota(H_1(\partial\Gamma))$ which implies $k_1=\ldots=k_m=0$. 
Consequently, $H_1(\tilde\Sigma^j)/\iota(H_1(\partial\tilde\Sigma^j))$ has at least rank $m$, which (again by Corollary~\ref{cor:homology2gsum}) concludes the proof of the inequality for the genus complexity $\gsum$.   

It remains to prove the estimate for the boundary complexity $\bsum$. 
By assumption, $\varepsilon>0$ is sufficiently small such that there is a smooth retraction of $U_{2\varepsilon}\Gamma$ onto the support of $\Gamma=\sum_{i=1}^{\compgamma}m_i\Gamma_i$, 
where we recall that $\Gamma_1,\ldots,\Gamma_\compgamma$ are induced by pairwise disjoint, connected free boundary minimal surfaces in $M$. 
In particular, the sets $U_{2\varepsilon}\Gamma_i$ and $U_{2\varepsilon}\Gamma_l$ are disjoint for $i\neq l$. 
Since $\tilde\Sigma^j\subset U_{2\varepsilon}\Gamma$, it is clear that the surfaces $\tilde\Sigma^j\cap U_{2\varepsilon}\Gamma_i$ converge for $j\to\infty$ to $m_i\Gamma_i$ for every $i\in\{1,\ldots,\compgamma\}$ 
(see \cite[Lemma~3.1]{Li2015} for a reference that restriction to $U_{2\varepsilon}(\Gamma_i)$ respects varifold convergence in this case). 
By Remark \ref{rem:complexities}, $\bsum$ is additive with respect to taking unions of connected components. 
Therefore, we may assume without loss of generality that $\Gamma$ is connected. 
Otherwise, we prove the inequality $\bsum(\Gamma_i)\leq\bsum(\tilde\Sigma^j\cap U_{2\varepsilon}\Gamma_i)$ for each connected component $\Gamma_i$ separately and conclude by summation. 
(Note that even with the assumption that $\Gamma$ is connected, it could happen that the surface $\tilde\Sigma^j$ is disconnected.)

Let $b$ denote the number of boundary components of $\Gamma$. 
For each $k\in\{1,\ldots,b\}$, let $v_k\in\partial\Gamma$ be a point on the $k$th boundary component and let $T$ be a properly embedded, rooted tree in $\Gamma$ whose leaves are exactly given by the set $\{v_1,\ldots,v_b\}$. 
Such a tree can be constructed e.\,g. by connecting each $v_k$ to some interior root $v_0\in\Gamma$ via a smooth embedded curve $\gamma_k\subset\Gamma$.  
If $j\in\N$ is sufficiently large, then the tree lifting lemma, Lemma~\ref{lem:BoundarySimonLemma}, implies that there exists a properly embedded tree $T^j$ in $\tilde\Sigma^j$ which is properly homotopic to $T$ in $U_{2\varepsilon}\Gamma$.
Let $\hat\Sigma^j$ denote the connected component of $\tilde\Sigma^j$ that contains $T^j$.
By construction, $\hat\Sigma^j$ is contained in $U_{2\varepsilon}\Gamma$ and the $2\varepsilon$-tubular neighbourhoods of different boundary components of $\Gamma$ are disjoint. 
Since the leaves of all the trees in the homotopy are restricted to these $2\varepsilon$-tubular neighbourhoods, $\hat\Sigma^j$ must have at least $b$ boundary components for all sufficiently large $j$. 
Thus, recalling Definition~\ref{defn:abc}, 
\[
\bsum(\Gamma)=b-1
\leq\liminf_{j\to\infty}\beta_0(\partial\hat\Sigma^j)-1 
=\liminf_{j\to\infty}\bsum(\hat\Sigma^j)
\leq\liminf_{j\to\infty} \bsum(\tilde\Sigma^j). \qedhere
\]
\end{proof}

\begin{proof}[Proof of Theorems \ref{thm:main} and \ref{thm:mainorientable}]
Let $\{\Sigma^j\}_{j\in\N}$ be the min-max sequence from Theorem~\ref{thm:PreviousResults} and $\Gamma$ its varifold limit. 
By Lemma~\ref{lem:SeqAlmMin}, we may apply Theorem \ref{thm:TopologyOfAlmMinSeq} to obtain a sequence $\{\tilde\Sigma^j\}_{j\geq J_\varepsilon}$ such that $\tilde\Sigma^j$ is obtained from $\Sigma^j$ through surgery and such that 
\begin{align}\label{eqn:20230623-1}
\beta_1(\Gamma)&\leq\liminf_{j\to\infty}\beta_1(\tilde\Sigma^j), &
\gsum(\Gamma)&\leq\liminf_{j\to\infty}\gsum(\tilde\Sigma^j), & 
\bsum(\Gamma)&\leq\liminf_{j\to\infty}\bsum(\tilde\Sigma^j).  
\end{align} 
Moreover, for all $j\geq J_\varepsilon$, Lemmata \ref{lem:Betti_under_surgery} and \ref{lem:gsum_under_surgery} imply
\begin{align}
\label{eqn:20230623-2}
\beta_1(\tilde\Sigma^j)&\leq\beta_1(\Sigma^j), &
\gsum(\tilde\Sigma^j)&\leq\gsum(\Sigma^j).
\end{align}  
Combining \eqref{eqn:20230623-2} with the first and second estimate in \eqref{eqn:20230623-1} completes the proof of Theorem \ref{thm:main}.  

If $\Sigma^j$ is orientable for all $j\geq J_\varepsilon$ then Lemma \ref{lem:orientable_surgery} implies 
\[
\bsum(\tilde\Sigma^j)+\gsum(\tilde\Sigma^j)\leq\bsum(\Sigma^j)+\gsum(\Sigma^j),
\] 
which combined with the second and third estimate in \eqref{eqn:20230623-1} completes the proof of Theorem~\ref{thm:mainorientable}. 
\end{proof}

\section{Free boundary minimal surfaces in the unit ball with genus zero}\label{sec:genus0} 

In this section, we choose the Euclidean unit ball $\B^3\vcentcolon=\{(x,y,z)\in\R^3\st x^2+y^2+z^2\leq1\}$ as ambient manifold $M$. 
Given an oriented line $\ell\subset\R^3$ and an angle $\alpha\in\R$, let $\rotation_{\ell}^{\alpha}$
denote the rotation of angle $\alpha$ around $\ell$. 
Given $2\leq n\in\N$, let $\dih_n$ denote the subgroup of Euclidean isometries acting on $\B^3$ generated by the two rotations $\rotation_{x\text{-axis}}^{\pi}$ and $\rotation_{z\text{-axis}}^{2\pi/n}$. 
We call $\dih_n$ the \emph{dihedral group} of order $2n$. 
Clearly, $\dih_n$ is orientation-preserving, being generated by rotations, and it contains the cyclic group $\Z_n$ generated by $\rotation_{z\text{-axis}}^{2\pi/n}$ as a subgroup. 
The goal of this section is to provide a self-contained proof of the following existence result.

\begin{theorem}\label{thm:fbmsg0}
For each $2\leq n\in\N$ there exists an embedded, $\dih_{n}$-equivariant free boundary minimal surface $\fbmsgO_n$ in the Euclidean unit ball $\B^3$ with the following properties. 
\begin{itemize} 
\item $\fbmsgO_n$ has genus zero and exactly $n$ pairwise isometric boundary components. 
\item The area of $\fbmsgO_n$ is strictly between $\pi$ and $2\pi$. 
\item The sequence $\{\fbmsgO_n\}_{n\geq2}$ converges in the sense of varifolds to the flat, equatorial disc with multiplicity two as $n\to\infty$.  
\end{itemize}
\end{theorem}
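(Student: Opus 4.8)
The plan is to realise $\fbmsgO_n$ as the outcome of an equivariant Simon--Smith min-max construction for the dihedral group $\dih_n$ acting on $M=\B^3$, and then to read off its topology from Theorems~\ref{thm:PreviousResults}, \ref{thm:main} and~\ref{thm:mainorientable}. First I would construct a one-parameter $\dih_n$-sweepout $\{\Sigma_t\}_{t\in[0,1]}$ of $\B^3$ modelled on the Fraser--Schoen surfaces (cf.\ \cite{FolhaPacardZolotareva2017,Ketover2016FBMS} and the constructions in \cite{Ketover2016Equivariant,CarlottoFranzSchulz2022}): for $t$ near $0$ and near $1$ the slice $\Sigma_t$ is a $\dih_n$-equivariant union of small spheres, respectively tiny polar caps, degenerating to a point; for $t$ in the intermediate range $\Sigma_t$ consists of two nearly flat $\dih_n$-equivariant discs slightly above and below the equatorial plane, joined near $n$ equally spaced points of the equatorial circle $\Sp^1\times\{0\}$ by $n$ thin half-neck bridges whose opening is controlled by $t$. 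Each such slice has genus zero and at most $n$ boundary components, all contained in an arbitrarily small neighbourhood of $\Sp^1\times\{0\}$; in particular $\gsum(\Sigma_t)=0$ and $\bsum(\Sigma_t)+\gsum(\Sigma_t)\le n-1$ for every $t$, while a direct area estimate gives $\sup_{t\in\partial[0,1]}\hsd^2(\Sigma_t)<\pi$ and $\sup_{t\in[0,1]}\hsd^2(\Sigma_t)<2\pi$.

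Let $\Pi$ be the $\dih_n$-saturation of $\{\Sigma_t\}$ and $W_\Pi$ its min-max width. The family $\{\Sigma_t\}$ itself gives $W_\Pi<2\pi$, whereas the strict lower bound $\pi<W_\Pi$ should follow as in \cite{CarlottoFranzSchulz2022}*{Corollary~3.8} from the fact that the intermediate slices are genuinely two-sheeted near the equator, so that no competitor in $\Pi$ can keep all its slices below the area $\pi$ of the equatorial disc. In particular $W_\Pi>\sup_{t\in\partial[0,1]}\hsd^2(\Sigma_t)$, so Theorem~\ref{thm:PreviousResults} produces a min-max sequence $\{\Sigma^j\}$ of $\dih_n$-equivariant surfaces converging as varifolds to $\Gamma=\sum_{i=1}^{\compgamma}m_i\Gamma_i$, a $\dih_n$-equivariant free boundary minimal surface of total mass $W_\Pi\in(\pi,2\pi)$ and $\dih_n$-equivariant index at most $1$. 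Since $\B^3$ is simply connected and orientable, all surfaces involved are orientable by Remark~\ref{rem:orientable}, so Theorems~\ref{thm:main} and~\ref{thm:mainorientable} give $\gsum(\Gamma)=0$ and $\bsum(\Gamma)+\gsum(\Gamma)\le n-1$.

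Next I would identify $\Gamma$. Arguing as in \cite{CarlottoFranzSchulz2022}*{Lemma~4.2} one shows that $\spt\Gamma$ contains the origin and that the varifold convergence has multiplicity one; combined with the classical area lower bound $\hsd^2(\cdot)\ge\pi$ for free boundary minimal surfaces in $\B^3$ (cf.\ \cite{FraserSchoen2011}) and with $W_\Pi<2\pi$, this forces $\compgamma=1$ and $m_1=1$, so that $\Gamma=\Gamma_1$ is connected, of genus zero, with $b\le n$ boundary components and area strictly between $\pi$ and $2\pi$; in particular $\Gamma$ is not a topological disc, since otherwise it would be the equatorial disc by \cite{Nitsche1985}, contradicting $W_\Pi>\pi$. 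Now the cyclic subgroup $\Z_n\subset\dih_n$ acts effectively on the connected genus-zero surface $\Gamma$ with an isolated interior fixed point (the origin), and a standard analysis of such actions shows that the number $b$ of boundary components of $\Gamma$ satisfies $b\in\{1,n\}$. Since $b=1$ is excluded, we obtain $b=n$; moreover $\Z_n$ then permutes the $n$ boundary components freely and transitively, so they form a single $\dih_n$-orbit and are pairwise isometric, and they are aligned along the equator because the construction is localised near $\Sp^1\times\{0\}$. \textbf{The main obstacle is precisely this circle of arguments, together with establishing the strict lower bound $W_\Pi>\pi$}: Theorem~\ref{thm:mainorientable} alone only yields $b\le n$, and promoting this to $b=n$ -- while ruling out that the construction degenerate to the equatorial disc -- requires carefully combining the $\dih_n$-equivariance with the detailed structure of the sweepout, which is exactly where sharpness of our topological control becomes indispensable.

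Finally, for the limit $n\to\infty$, the sweepouts can be arranged so that $\sup_{t}\hsd^2(\Sigma_t)\to2\pi$, and a matching lower bound forces $\hsd^2(\fbmsgO_n)=W_{\Pi_n}\nearrow2\pi$. Passing to a subsequential varifold limit of $\{\fbmsgO_n\}$ one obtains a stationary integral varifold in $\B^3$ with free boundary, of mass $2\pi$, containing the origin and invariant under every rotation about the $z$-axis as well as under $\rotation_{x\text{-axis}}^{\pi}$ (since $\dih_n$ exhausts these symmetries as $n\to\infty$); the only such varifold is an integer multiple of the equatorial disc, and the mass identity forces the multiplicity to be exactly two. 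Uniqueness of the limit upgrades subconvergence to full convergence, completing the proof of Theorem~\ref{thm:fbmsg0}.
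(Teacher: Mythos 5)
Your overall architecture (equivariant sweepout, width estimate via the isoperimetric inequality, topological control via Theorems~\ref{thm:main} and~\ref{thm:mainorientable}, asymptotics) matches the paper's, but two of your key steps contain genuine gaps.

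First, your identification of the boundary count rests on the claim that $\spt\Gamma$ contains the origin, imported ``as in \cite{CarlottoFranzSchulz2022}*{Lemma~4.2}''. That claim is not available here and is in fact expected to be \emph{false}: the conjectural limit (the Folha--Pacard--Zolotareva genus-zero surface) consists of two sheets close to the equatorial disc joined by $n$ half-necks and is disjoint from the $z$-axis. The sweepout slices for $t\in\interval{0,1}$ do not pass through the origin, so nothing forces the limit to. Without an interior fixed point your Riemann--Hurwitz count does not give $b\in\{1,n\}$; the correct statement (Lemma~\ref{lem:structural} in the paper) is that a connected, genus-zero, $\Z_n$-equivariant surface with $2\le b\le n$ boundary components has $b\in\{2,n\}$, with $b=2$ occurring exactly when the surface avoids the axis. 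Ruling out the annulus $b=2$ is the genuinely hard point, and the paper does it by a separate mechanism you do not have: if $\fbmsgO_n$ were an annulus disjoint from the axis, one passes to a regularization $M$ of the quotient $\B^3/\Z_n$, checks that the quotients $\tilde\Sigma^j/\Z_n$ are still almost minimizing and converge to $\fbmsgO_n/\Z_n$, and then Theorem~\ref{thm:TopologyOfAlmMinSeq} applied in $M$ gives the contradiction $1=\bsum(\fbmsgO_n/\Z_n)\le\liminf_j\bsum(\tilde\Sigma^j/\Z_n)=0$. Your proposal flags this step as the main obstacle but the specific argument you offer for it does not work.

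Second, your asymptotic argument presupposes $W_{\Pi_n}\to2\pi$ via an unexplained ``matching lower bound''; no such bound is established, and the paper argues in the opposite direction. It shows directly that the surfaces collapse onto the equatorial plane: by \cite{White2016} the maximal distance from the equatorial disc is attained on $\partial\fbmsgO_n$ and $\hsd^1(\partial\fbmsgO_n)=2\hsd^2(\fbmsgO_n)<4\pi$, so each of the $n$ boundary components has length $<4\pi/n$ and meets the equator, whence $d(n)<4\pi/n\to0$. Any subsequential varifold limit is then an integer multiple $m\in\{1,2\}$ of the equatorial disc by the area bounds, and $m=1$ is excluded by \cite[Proposition~2.1]{Ketover2016FBMS} since a free boundary minimal surface of area close to $\pi$ would be the disc, contradicting $\beta_0(\partial\fbmsgO_n)=n$. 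I would also note that your appeal to \cite{CarlottoFranzSchulz2022}*{Corollary~3.8} for the strict width bound needs modification: there every slice bisects the ball and a symmetry exchanges the two sides, whereas here one must instead use that the regions $F_t$ avoid the fixed equatorial points $p_1,\ldots,p_n$ to force $\varphi(F_{t_*})=F_{t_*}$ while $\varphi(\Omega)=\B^3\setminus\Omega$ for the comparison half-ball $\Omega$.
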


\begin{remark}
Conjecturally the full symmetry group of $\fbmsgO_n$ is the prismatic group $\pri_n$ of order $4n$ (cf. \cite[§\,2]{CSWnonuniqueness}). 
We prefer to work with the subgroup $\dih_n$ because $\pri_n$ is not orientation-preserving being generated by the reflection across the plane $\{y=0\}$ in addition to the two rotations $\rotation_{x\text{-axis}}^{\pi}$ and $\rotation_{z\text{-axis}}^{2\pi/n}$. 
We also conjecture that $\fbmsgO_n$ coincides for all $n$ with the surface $\Gamma_n^{\mathrm{FPZ}}$ described in \cite[Conjecture 7.8]{CSWnonuniqueness} which, in turn, coincides with the genus zero free boundary minimal surfaces constructed by Folha--Pacard--Zolotareva \cite{FolhaPacardZolotareva2017} for all sufficiently large $n\in\N$.
\end{remark}

\begin{remark}
For $n=3$, Theorem \ref{thm:fbmsg0} states the existence of a \emph{free boundary minimal trinoid} as visualized in Figure \ref{fig:trinoid}, left image. 
For $n=2$, Theorem \ref{thm:fbmsg0} provides a variational construction of an embedded, $\dih_2$-equivariant free boundary minimal annulus in $\B^3$. 
We are not aware of any result stating that such an object must be isometric to the critical catenoid even though the uniqueness of the latter has been conjectured and proved under various other symmetry assumptions \cite{McGrath2018,KusnerMcGrath2020}.  
\end{remark}

\begin{figure}%
\centering   
\begin{unitball}
\FBMS{g0b3-P3}
\end{unitball}
\hfill
\begin{unitball}
\FBMS{g0b4-P4}
\end{unitball}
\quad~
\caption{Simulations of the free boundary minimal surfaces $\fbmsgO_3$ and $\fbmsgO_4$ in $\B^3$.}%
\label{fig:trinoid}%
\end{figure}

The variational proof of Theorem \ref{thm:fbmsg0} involves the following steps. 
\begin{enumerate}[label={(\Roman*)}]
\item\label{thm:fbmsg0-Step1} Design a \emph{sweepout} of $\B^3$ consisting of $\dih_n$-equivariant surfaces with genus zero, $n$ boundary components and area strictly less than $2\pi$ and define its $\dih_n$-equivariant saturation.  
\item\label{thm:fbmsg0-Step2} Verify the \emph{width} estimate and apply equivariant min-max theory to extract a min-max sequence converging in the sense of varifolds to a free boundary minimal surface $\Gamma$. 
\item\label{thm:fbmsg0-Step3} Control the \emph{topology}: Show that $\Gamma$ has genus zero and exactly $n$ boundary components.
\item\label{thm:fbmsg0-Step4} Determine the \emph{asymptotic behaviour} of $\{\fbmsgO_n\}_{n\geq2}$ for $n\to\infty$ in the sense of varifolds.  
\end{enumerate}

In \cite[§\,5]{Ketover2016FBMS}, Ketover provides the details for step~\ref{thm:fbmsg0-Step1}. 
For the sake of completeness, we repeat the construction of the sweepout using similar notation as in \cite[§\,2]{CarlottoFranzSchulz2022}. 
The min-max theory used in step~\ref{thm:fbmsg0-Step2} has also been developed in \cite{Ketover2016FBMS}.
Regarding step \ref{thm:fbmsg0-Step3}, we recall that varifold convergence is too weak to preserve the topology in general, and in this specific case, boundary components could be lost or gained in the limit.   
The idea presented in \cite{Ketover2016FBMS} is to classify all possible topological changes along the min-max sequence in question and to conclude that they would always result in two equatorial discs contradicting a strict upper area bound.  
We provide an alternative, more general approach based on a combination of Theorems~\ref{thm:main} and~\ref{thm:mainorientable} and the following structural lemma about arbitrary (not necessarily minimal) equivariant surfaces of genus zero. 

\begin{lemma}\label{lem:structural}
Given $3\leq n\in\N$, let $M\subset\R^3$ be any convex, bounded domain with piecewise smooth boundary such that the cyclic group $\Z_{n}$ acts on $M$ by isometries.  
Let $\Sigma\subset M$ be any compact, connected, $\Z_{n}$-equivariant surface of genus zero which is properly embedded and has $b\in\{2,3,\ldots,n\}$ boundary components. 
Then $b\in\{2,n\}$. 
Moreover, if $\Sigma$ intersects the singular locus of the $\Z_n$-action, then $b=n$ and the group $\Z_n$ acts simply transitively on the collection of boundary components. 
\end{lemma}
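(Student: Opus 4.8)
The strategy is to exploit the genus-zero hypothesis via the classification of planar surfaces together with an Euler-characteristic / group-action count. Since $\Sigma$ is connected, orientable (being a genus-zero surface with boundary, it is automatically orientable) and has genus zero with $b$ boundary components, it is homeomorphic to a sphere with $b$ open discs removed, so $\chi(\Sigma) = 2 - b$. The cyclic group $\Z_n$ acts on $\Sigma$; let $k = \lvert\Sigma\cap L\rvert$ where $L$ denotes the singular locus (the $z$-axis, intersected with $M$), noting that the fixed-point set of the $\Z_n$-action on $\Sigma$ is exactly $\Sigma\cap L$ and consists of finitely many points (interior fixed points) since a one-dimensional fixed set would force $\Sigma$ to be invariant in a way incompatible with proper embeddedness in a convex body — more carefully, $\Sigma\cap L$ is a finite set because $\Sigma$ is properly embedded and $L$ is an interval meeting $\partial M$ only at its two endpoints, so $\Sigma\cap L$ is either empty or a finite union of points and closed subintervals, and an interval component would have its endpoints on $\partial M$, hence $\Sigma$ would contain a full chord of $M$, which one excludes by a transversality/maximum-principle-free elementary argument, or simply one restricts attention to the generic case. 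The quotient orbifold $\Sigma/\Z_n$ is then a genus-zero surface with boundary, and the Riemann–Hurwitz formula gives
\begin{align*}
\chi(\Sigma) &= n\,\chi(\Sigma/\Z_n) - (n-1)k.
\end{align*}

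Next I would analyze the boundary. The group $\Z_n$ permutes the $b$ boundary components of $\Sigma$; since $n$ is prime-free in general but $b\le n$, each orbit of boundary components has size $1$ or a nontrivial divisor of $n$. A boundary component fixed (as a set) by all of $\Z_n$ would have to meet $L$, but $\partial\Sigma = \Sigma\cap\partial M$ and $L\cap\partial M$ consists of the two poles; a boundary circle through a pole, being $\Z_n$-invariant and a smooth embedded circle meeting $\partial M$, is impossible for $n\ge 3$ (the circle would be forced to be a single point, or to wrap around in a way violating embeddedness). Hence every $\Z_n$-orbit of boundary components has size $>1$, so $n \mid (\text{size of each orbit})$ is too strong; rather each orbit size divides $n$ and is $>1$. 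Combined with $2 \le b \le n$: either there is a single orbit of size $b$ with $b \mid n$ and $b > 1$, giving (together with the Euler characteristic constraint below) $b \in \{2, n\}$, or there are several orbits — but then $b$ is a sum of at least two divisors of $n$ each $\ge 2$, forcing $b > n$ unless... this is where the arithmetic must be pushed: the constraint from $\chi(\Sigma/\Z_n) \le 1$ (genus zero quotient with at least one boundary component) plugged into Riemann–Hurwitz, namely $2 - b = n\,\chi(\Sigma/\Z_n) - (n-1)k \ge n(1 - \#\{\text{quotient boundary comps}\}) - (n-1)k$ together with $\#\{\text{quotient boundary}\} = $ number of orbits, pins down the possibilities. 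Working through: if $k = 0$ then $\chi(\Sigma)=2-b$ is divisible by $n$, and with $0 < b \le n$ this forces $b \in \{2\}$ when $b < n$ is... actually $2 - b \equiv 0 \pmod n$ with $2-n \le 2-b \le 0$ gives $2 - b = 0$ or $2 - b = 2 - n$, i.e. $b = 2$ or $b = n$. If $k \ge 1$, then $\chi(\Sigma) = 2 - b \equiv -(n-1)k \equiv k \pmod n$, so $2 - b \equiv k$, and one shows $k$ must be small ($k \le 2$, since the poles are the only candidate fixed points on $\partial M$ and interior fixed points on a genus-zero surface are limited by Riemann–Hurwitz with nonnegative quotient genus), and a short case check again yields $b \in \{2, n\}$, with $b = n$ precisely when $\Sigma$ meets $L$ (forcing $k \ge 1$) and then the boundary orbit has size $n$, i.e. $\Z_n$ acts simply transitively on the $n$ boundary components.

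For the last sentence I would argue: if $\Sigma\cap L \ne \emptyset$, then $k\ge 1$ and the congruence $2 - b \equiv k \pmod n$ with $1 \le b \le n$ and small $k$ leaves only $b = n$ (since $b = 2$ would force $k \equiv 0 \pmod n$, impossible for $1 \le k \le 2 < n$ as $n \ge 3$). Then $\Z_n$ permutes $n$ boundary circles; the action on this $n$-element set has no fixed point (as argued, a fixed circle meets $L\cap\partial M$, only the poles, excluded) and more strongly the stabilizer of any boundary component is trivial — a boundary circle fixed setwise by a nontrivial subgroup $\Z_d \subset \Z_n$ would be $\Z_d$-invariant and meet the singular locus of the $\Z_d$-action on $\Sigma$, which on $\partial M$ is again just the poles — so every orbit has size $n$, and since there are exactly $n$ components this is one free orbit, i.e. the action is simply transitive. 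The main obstacle I anticipate is making rigorous the repeatedly-used claim that an embedded boundary circle (or an embedded surface $\Sigma$) cannot be invariant under a nontrivial rotation subgroup in a way that would produce extra fixed circles on $\partial M$ — this requires care because $\partial M$ is only piecewise smooth and convex, not a round sphere, so the clean "fixed set is $L$" picture needs the elementary observation that $\mathrm{Fix}(\Z_d) = L$ for every nontrivial $\Z_d \le \Z_n$ acting on $\R^3$ by rotations about the $z$-axis, hence on $M$ the fixed set is $L\cap M$ whose boundary portion is exactly the two poles; the rest is then bookkeeping with Riemann–Hurwitz and the divisor arithmetic of $b \le n$.
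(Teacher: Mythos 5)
Your overall strategy -- Riemann--Hurwitz for the quotient $\Sigma/\Z_n$ with branch points at $\Sigma\cap L$, combined with the arithmetic constraint $2\leq b\leq n$ -- is exactly the paper's, and the congruence bookkeeping at the core ($k=0\Rightarrow b=2$, $k=2\Rightarrow b=n$, $k=1$ impossible) is sound. But three steps are genuinely unsupported. First, you never establish that $\Sigma\cap L$ is a finite set of transverse intersection points, which is what licenses Riemann--Hurwitz with ramification index $n$ at each such point; ``restrict to the generic case'' is not available, since $\Sigma$ is a given surface. The paper's fix is short: a properly embedded genus-zero surface is two-sided, the unit normal is $\Z_n$-equivariant, so at any point of $\Sigma\cap L$ the normal is fixed by a rotation of order $n\geq3$ and hence lies on the axis; thus every intersection with $L$ is orthogonal, in particular transverse and isolated. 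Second, the bound $k\leq2$ is asserted but not derived, and without it the congruence $2-b\equiv k\pmod n$ restricts almost nothing (e.g.\ $k=3$ would permit $b=n-1$). It does follow from the identity you wrote down: $2-b=n(2-2g'-b')-k(n-1)$ with $g'\geq0$, $b'\geq1$ and $b\leq n$ gives $k(n-1)\leq 2n-2$, hence $k\leq2$; alternatively the paper caps $k$ by first closing up $\Sigma$ to an equivariant sphere and applying Riemann--Hurwitz there. Either way the estimate must actually appear.

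Third, and most seriously, your route to simple transitivity rests on the claim that a boundary circle invariant under $\Z_n$ (or a nontrivial subgroup $\Z_d$) must meet the fixed locus on $\partial M$. That is false: an ``equatorial'' circle on $\partial M$ centred on the axis is $\Z_n$-invariant and disjoint from the poles. So neither ``every orbit of boundary components has size $>1$'' nor ``stabilizers of boundary components are trivial'' is established by your argument, and the earlier paragraph on orbit sizes is not needed for the count anyway. The correct conclusion is already contained in your Riemann--Hurwitz identity: in the case $b=n$ one gets $k=2$ and $2-n=n(2-b')-2(n-1)$, hence $b'=1$, so the quotient has a single boundary circle, $\partial\Sigma$ is a single $\Z_n$-orbit of $n$ circles, and a transitive action of a group of order $n$ on $n$ elements is simply transitive. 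This is how the paper argues, and it replaces the false invariant-circle claim entirely.
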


\begin{proof}
We claim that $\Sigma$ can intersect the axis of rotation at most twice. 
As described e.\,g. in the proof of \cite[Corollary D.2]{CSWnonuniqueness}, every boundary component of $\Sigma$ can be closed up by gluing in a topological disc while preserving embeddedness and $\Z_n$-equivariance. 
The resulting surface $\tilde{\Sigma}$ is closed and embedded, thus two-sided, allowing a global unit normal vector field $\nu$ which inherits the $\Z_n$-equivariance. 
Let $\xi_0\subset\R^3$ denote the singular locus of the $\Z_n$-action, i.\,e. the axis of rotation. 
If $\tilde{\Sigma}$ intersects $\xi_0$ in some point $p_0$, then $\nu(p_0)\in\xi_0$ since $p_0$ is fixed by the group action. 
Therefore, any intersection of $\tilde{\Sigma}$ with the axis $\xi_0$ must be orthogonal and the number of such intersections is well-defined and finite. 
Moreover, the number of such intersections is even, i.\,e.~equal to $2j$ for some integer $j\geq0$, because $\tilde{\Sigma}$ is compact without boundary.   
The quotient $\tilde\Sigma/\Z_{n}$ has an orbifold structure (see \cite[Prop.~13.2.1]{Thurston2022})
and \cite[Prop.~13.3.1]{Thurston2022} implies that the underlying space is an orientable topological surface $\tilde\Sigma'$ without boundary. 
Thus, $\tilde\Sigma'$ has Euler characteristic $\chi(\tilde\Sigma')=2-2g'$ for some nonnegative integer $g'$. 
A variant of the Riemann--Hurwitz formula (see e.\,g.~\cite[§\,IV.3]{Freitag2011}) implies
\begin{align*} 
2=\chi(\tilde\Sigma)&=n\chi(\tilde\Sigma')-2j(n-1)
=2n-2ng'-2j(n-1)
\end{align*}
or equivalently $0=n-ng'-j(n-1)-1$ where all variables are nonnegative integers. 
Consequently, $g'=0$ and $j=1$ which means that $\tilde\Sigma$ intersects $\xi_0$ exactly twice and $\Sigma$ intersects $\xi_0$ at most twice. 

Let $i\in\{0,1,2\}$ be the number of intersections of the original surface $\Sigma$ with the axis of rotation. 
The quotient $\Sigma'=\Sigma/\Z_{n}$ is again an orientable topological surface with Euler characteristic $\chi(\Sigma')=2-b'$ for some nonnegative integer $b'$.
The Riemann--Hurwitz formula now implies
\begin{align*} 
2-b=\chi(\Sigma)&=n\chi(\Sigma')-i(n-1)
=2n-b'n-in+i
\end{align*}
or equivalently, $b=(b'-(2-i))n+(2-i)$. 
Since $n\geq3$ and $(2-i)\in\{0,1,2\}$ as shown above, we may complete the proof by distinguishing the following cases. 
\begin{itemize}  
\item $b'<(2-i)\Rightarrow b<0$, which is absurd. 
\item $b'>(3-i)\Rightarrow b>n$, which contradicts our assumption $b\in\{2,3,\ldots,n\}$. 
\item $b'=(2-i)\Rightarrow b=(2-i)$. 
In this case the assumption $b\geq 2$ yields $i=0$ and $b=2$. 
\item $b'=(3-i)\Rightarrow b=n+(2-i)$. 
Then the assumption $b\leq n$ yields $i=2$ and $b=n$.  
\end{itemize}
In the last case, we also have $b'=1$ which implies that the quotient $\Sigma/\Z_n$ has exactly one boundary component and its orbit is $\partial\Sigma$. 
Consequently, $\Z_n$ acts simply transitively on the connected components of $\partial\Sigma$.  
\end{proof}

\begin{proof}[Proof of Theorem \ref{thm:fbmsg0}]
\ref{thm:fbmsg0-Step1} \emph{Sweepout construction.} 
For every $k\in\{1,\ldots,n\}$ let $B_\varepsilon(p_k)\subset\R^3$ denote the ball of radius $\varepsilon>0$ around the equatorial point 
\begin{align}\label{eqn:pk}
p_k=\biggl(\cos\Bigl(\frac{2\pi k}{n}\Bigr),\;\sin\Bigl(\frac{2\pi k}{n}\Bigr),\;0\biggr)
\end{align}
and consider the subsets 
\begin{align*}
D_{\varepsilon}&\vcentcolon=\B^3\cap\{z=0\}\setminus\bigcup^{n}_{k=1}B_\varepsilon(p_k), & 
D_{t,\varepsilon}&\vcentcolon=\bigl(\sqrt{1-t^2}D_{\varepsilon}\bigr)+(0,0,t). 
\end{align*}
Given any $t\in\interval{-1,1}$ and any $0<\varepsilon<\sin(\pi/n)$ the set $D_{t,\varepsilon}$ is a topological disc inside $\B^3$ 
and we may $\dih_n$-equivariantly connect the two sets $D_{\pm t,\varepsilon}$ by means of $n$ ribbons. 
To be precise, with $0<\varepsilon_0\leq t_0\ll1$ to be chosen, we let $\varepsilon\colon\Interval{t_0,1}\to\intervaL{0,\varepsilon_0}$ be a continuous function of $t$ such that $\varepsilon(t)\to0$ as $t\to1$ and define  
\begin{align}\label{eqn:20230615}
\Omega_{t}&\vcentcolon=\bigcup_{\tau\in[-t,t]}D_{\tau ,\varepsilon(t)}, & 
\Sigma_{t}&\vcentcolon=\overline{\partial\Omega_{t}\setminus\partial\B^3}
\end{align}
for all $t\in\Interval{t_0,1}$, where $\partial\Omega_{t}$ refers to the topological boundary of the set $\Omega_{t}\subset\R^3$. 
Naturally, we define $\Sigma_1$ to be the union of great circles on $\partial\B^3$ connecting the equatorial points $p_1,\ldots,p_n$ defined in \eqref{eqn:pk} with the north and the south pole  
(see Figure \ref{fig:sweepout}, images 1--2).
The area of each ribbon connecting the two subsets $\Sigma_t\cap\{\abs{z}=t\}$ is bounded from above by $2\pi\varepsilon_0t$ (cf. \cite[Lemma~2.4]{CarlottoFranzSchulz2022}). 
Hence, for all $t\in\Interval{t_0,1}$ the area of $\Sigma_t$ satisfies 
\(
\hsd^2(\Sigma_t)\leq2\pi(1-t^2+n\varepsilon_0t).
\) 
If we choose $\varepsilon_0=t_0/(2n)$ then 
\begin{align}\label{eqn:upper-area-bound}
\hsd^2(\Sigma_t)\leq (2-t_0^2)\pi <2\pi
\end{align}
for all $t\in\Interval{t_0,1}$. 
Setting $p_0=p_n$, let 
\[
\Sigma_0\vcentcolon=\bigcup_{k=1}^{n}\bigl\{r(p_{k-1}+p_{k})\st r>0\bigr\}\cap\B^3.
\] 
As one decreases $t$ further from $t_0$ to $0$, the idea is to deform $\Sigma_t$ continuously into $\Sigma_0$ without violating the strict upper area bound \eqref{eqn:upper-area-bound}. 
This can be achieved by appealing to the catenoid estimate, \cite[Proposition~2.1 and Theorem~2.4]{KetoverMarquesNeves2020},
which in our specific case can be carried out explicitly as follows. 
Given $0<r<\sin(\pi/n)$ and $0<h<e^{-8n}$ we consider the surfaces 
\begin{align*}
C_s^{r,h}\vcentcolon=\biggl\{(x,y,z)\in\R^3\st\sqrt{x^2+y^2}=\frac{r\cosh(sz)}{\cosh(sh)},~\abs{z}\leq h\biggr\} 	
\end{align*}
as in \cite[(7)]{CarlottoFranzSchulz2022}
which interpolate between the cylinder $C_0^{r,h}$ with radius $r$ and height $2h$ and the union $C_\infty^{r,h}$ of two parallel discs with radius $r$ connected by a vertical line segment of length $2h$. 
For each $k\in\{1,\ldots,n\}$ and each $0\leq s\leq\infty$ let $C_{s,k}^{r,h}\vcentcolon=C_s^{r,h}+p_k$ be the horizontally translated surface centered at the equatorial point $p_k$. 
By \cite[Lemma A.1]{CarlottoFranzSchulz2022} the slice of maximal area in the family $\{C_{s,k}^{r,h}\}_{s>0}$ of surfaces is an unstable catenoid. 
In particular, this implies that, if $r$, $h$ and $r/h$ are sufficiently small, then 
(cf.~\cite[(10)]{CarlottoFranzSchulz2022} and \cite[Proposition~2.1]{KetoverMarquesNeves2020}) 
\begin{align*}
\sup_{s\geq0}
\hsd^2\bigl(C_{s,k}^{r,h}\cap\B^3\bigr)
&\leq \hsd^2\bigl(C_{\infty,k}^{r,h}\cap\B^3\bigr)+\frac{4\pi h^2}{(-\log h)}.
\end{align*}

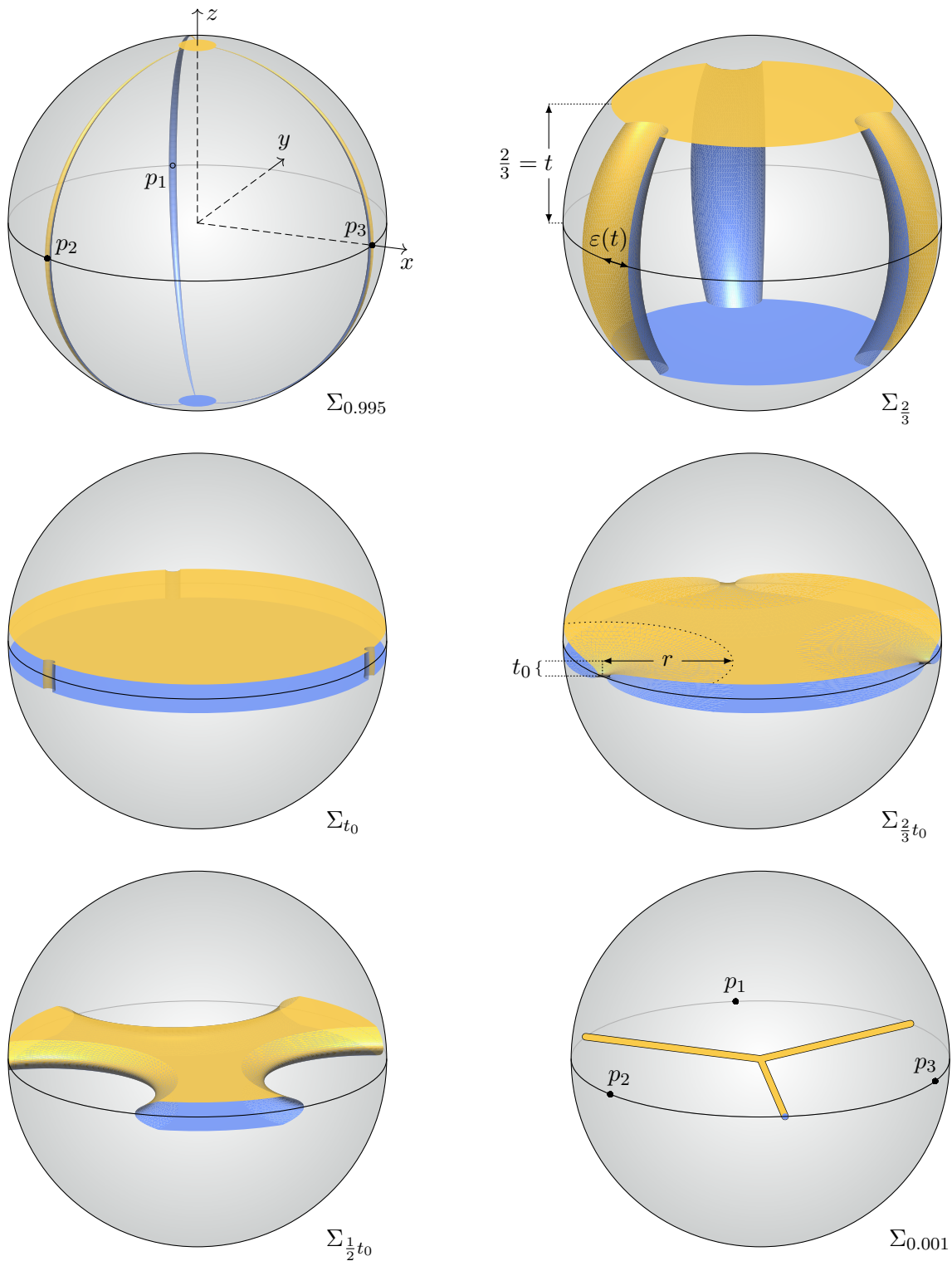
\begin{figure}%
\providecommand{\backsphere}{\path[tdplot_screen_coords,inner color=white,outer color=cyan!5!black!20](0,0)circle(1);\tdplotdrawarc[black!30,thin]{(0,0,0)}{1}{0}{360}{}{}}
\providecommand{\frontsphere}{\tdplotdrawarc[black,thin]{(0,0,0)}{1}{\phiO-180}{\phiO}{}{}\draw[tdplot_screen_coords,black,semithick] (0,0) circle (1);}
\pgfmathsetmacro{\thetaO}{72}
\pgfmathsetmacro{\phiO}{90-3*45/2} 
\tdplotsetmaincoords{\thetaO}{\phiO}
\begin{tikzpicture}[tdplot_main_coords,line cap=round,line join=round,scale=\unitscale,baseline={(0,0,0)}]
\backsphere
\draw[densely dashed]
(0,0,0)--(0.995,0,0)(1,0,0)coordinate(x)
(0,0,0)--(0,1,0)coordinate(y)
(0,0,0)--(0,0,0.9950)coordinate(z)
;
\FBMS{sweepout1}
\draw[->](x)--(1.2,0,0)node[below]{$x$};
\draw[->](y)--(0,1.2,0)node[above]{$y$};
\draw[->](z)--(0,0,1.2)node[below right,inner sep=0]{~$z$};
\draw (120:1)coordinate[draw,circle,inner sep=0.8pt]node[anchor=45]{$p_1$};
\draw plot[bullet](240:1)node[anchor=-150]{$p_2$};
\draw plot[bullet](360:1)node[anchor=-45]{$p_3$};
\draw[tdplot_screen_coords](FBMS.south east)+(-2ex,.6ex)
node[below right,inner sep=0]{$\mathrlap{\Sigma_{0.995}}$};
\frontsphere
\end{tikzpicture}
\hfill
\begin{tikzpicture}[tdplot_main_coords,line cap=round,line join=round,scale=\unitscale,baseline={(0,0,0)}]
\backsphere
\FBMS{sweepout2}
\draw({sqrt(5/9)*cos(\phiO-180)},{sqrt(5/9)*sin(\phiO-180)},2/3)coordinate(z);
\draw[tdplot_screen_coords,densely dotted](z)++(-0.33,0)coordinate(top)--(z);
\path[tdplot_screen_coords](top)--++(0,{-2*sin(\thetaO)/3})
node[midway,inner sep=0](t){$\mathllap{\frac{2}{3}={}}t$}
coordinate(bot);
\draw[-latex](t)--(top);
\draw[-latex](t)--(bot);
\draw[tdplot_screen_coords,densely dotted](bot)--(-1,0);
\draw[latex-latex](240:1)--(240+0.2*180/3:1)node[near start,above]{$\varepsilon(t)$};
\draw[tdplot_screen_coords](FBMS.south east)+(-2ex,.6ex)
node[below right,inner sep=0]{$\mathrlap{\Sigma_{\frac{2}{3}}}$};
\frontsphere
\end{tikzpicture}
\bigskip 

\begin{tikzpicture}[tdplot_main_coords,line cap=round,line join=round,scale=\unitscale,baseline={(0,0,0)}]
\backsphere
\FBMS{sweepout3}
\draw[tdplot_screen_coords](FBMS.south east)+(-2ex,.6ex)
node[below right,inner sep=0]{$\mathrlap{\Sigma_{t_0}}$};
\frontsphere
\end{tikzpicture}
\hfill
\begin{tikzpicture}[tdplot_main_coords,line cap=round,line join=round,scale=\unitscale,baseline={(0,0,0)}]
\backsphere
\FBMS{sweepout4}
\begin{scope}
\clip(0,0,0.08)circle(1);
\draw[dotted](-0.8437,-1.4613,0.0800)arc(240:240+360:0.6896)coordinate[pos=0.4](r);
\end{scope}
\path[tdplot_screen_coords](r)--++(-0.6896,0)node[midway](m){$r$}coordinate(l);
\draw[tdplot_screen_coords,-latex](m)--(r);
\draw[tdplot_screen_coords,-latex](m)--(l);
\draw[tdplot_screen_coords,densely dotted](l)--++(0,-0.08)coordinate(u);
\draw[tdplot_screen_coords,densely dotted]
(l)--++(-0.3,0)coordinate(top)
(u)--++(-0.3,0)coordinate(bot);
\path[tdplot_screen_coords](top)--(bot)node[left=2.5pt,midway]{$t_0$}
node[left=1pt,midway,inner sep=0,scale=0.9]{$\scriptstyle\{$};
;
\draw[tdplot_screen_coords](FBMS.south east)+(-2ex,.6ex)
node[below right,inner sep=0]{$\mathrlap{\Sigma_{\frac{2}{3}t_0}}$};
\frontsphere
\end{tikzpicture}
\bigskip 

\begin{tikzpicture}[tdplot_main_coords,line cap=round,line join=round,scale=\unitscale,baseline={(0,0,0)}]
\backsphere
\FBMS{sweepout5}
\draw[tdplot_screen_coords](FBMS.south east)+(-2ex,.6ex)
node[below right,inner sep=0]{$\mathrlap{\Sigma_{\frac{1}{2}t_0}}$};
\frontsphere
\end{tikzpicture}
\hfill
\begin{tikzpicture}[tdplot_main_coords,line cap=round,line join=round,scale=\unitscale,baseline={(0,0,0)}]
\backsphere
\FBMS{sweepout6}
\draw plot[bullet](120:1)node[above]{$p_1$};
\draw plot[bullet](240:1)node[anchor=-120]{$p_2$};
\draw plot[bullet](360:1)node[anchor=-60]{$p_3$};
\draw[tdplot_screen_coords](FBMS.south east)+(-2ex,.6ex)
node[below right,inner sep=0]{$\mathrlap{\Sigma_{0.001}}$};
\frontsphere
\end{tikzpicture}
\caption{Successive stages of the sweepout in the case $n=3$.}%
\label{fig:sweepout}%
\end{figure}%

We are now free to choose $t_0=h<e^{-8n}$ and $s=s_0$ such that $r/\cosh(s_0t_0)=\varepsilon(t_0)$. 
For each $k\in\{1,\ldots,n\}$ we deform $\Sigma_{t_0}\cap\{(x,y,z)\in\B^3\st \dist((x,y,0),p_k)<r\}$
into a copy of $C_{s_0,k}^{r,t_0}\cap\B^3$. 
In fact, the two surfaces in question are arbitrarily close provided that both $\varepsilon(t_0)\in\interval{0,\varepsilon_0}$ (and thus $1/s_0>0$) are sufficiently small, such that we can continuously deform one into the other without significantly increasing the area. 
This deformation (performed simultaneously and equivariantly for all $k\in\{1,\ldots,n\}$) defines $\Sigma_t$ for $t\in\Interval{2t_0/3,t_0}$ (see Figure~\ref{fig:sweepout}, images 3--4).
As one decreases $t$ further form $2t_0/3$ to $t_0/3$, we decrease $s$ from $s_0$ to $0$ and define $\Sigma_t$ accordingly such that (cf. \cite[(13)]{CarlottoFranzSchulz2022})
\begin{align*} 
\hsd^2(\Sigma_t)&\leq\hsd^2(\Sigma_{t_0})+\frac{4\pi t_0^2\,n}{(-\log t_0)}
\leq\Bigl(2- \frac{t_0^2}{2}\Bigr)\pi 
\end{align*}
for all $t\in\Interval{t_0/3,2t_0/3}$. 
While $t$ decreases further from $t_0/3$ to $0$ it is now straightforward to deform $\Sigma_t$ into $\Sigma_0$ with area decreasing monotonically to zero (see Figure~\ref{fig:sweepout}, images 5--6). 
To conclude the construction of the sweepout $\{\Sigma_t\}_{t\in[0,1]}$ we regularize the slices equivariantly without violating the strict $2\pi$ upper area bound such that $\Sigma_t$ is a smooth surface for $0<t<1$.

\ref{thm:fbmsg0-Step2} \emph{Width estimate.}
Given the sweepout $\{\Sigma_t\}_{t\in[0,1]}$ constructed in \ref{thm:fbmsg0-Step1}, we 
consider the width $W_\Pi$ of the $\dih_n$-saturation $\Pi$, as introduced in Definition~\ref{def:SaturationWidth}, and claim that 
\begin{align}\label{eqn:width-estimate}
\pi<W_\Pi<2\pi.
\end{align}
The upper bound in \eqref{eqn:width-estimate} follows directly from the fact that $\hsd^2(\Sigma_t)<2\pi$ for all $t\in[0,1]$. 
For the lower bound we define a family $\{F_t^{\Sigma}\}_{t\in[0,1]}$ of $\dih_n$-equivariant subsets of $\B^3$ with finite perimeter such that $\Sigma_t$ is the relative boundary of $F_t^{\Sigma}$ in $\B^3$ and such that $F_t^{\Sigma}$ does not contain any of the points $p_1,\ldots,p_n$ defined in \eqref{eqn:pk} for all $t\in\interval{0,1}$. 
(In fact, $F_t^{\Sigma}$ roughly corresponds to the set $\Omega_t$ defined explicitly in \eqref{eqn:20230615}, at least for all $t\in\Interval{t_0,1}$.)   
We recall that given any $\{\Lambda_t\}_{t\in[0,1]}\in\Pi$ there exists a smooth map $\Phi\colon[0,1]\times\B^3\to\B^3$, where $\Phi(t,\cdot)$ is a diffeomorphism which commutes with the $\dih_n$-action for all $t\in[0,1]$ and coincides with the identity for all $t\in\{0,1\}$, such that $\Lambda_t=\Phi(t,\Sigma_t)$ for all $t\in[0,1]$. 
Then, $F_t\vcentcolon=\Phi(t,F_t^{\Sigma})$ is $\dih_n$-equivariant such that $\hsd^3(F_0)=0$ and $\hsd^3(F_1)=\hsd^3(\B^3)$, and such that for every $t\in\interval{0,1}$
\begin{itemize}
\item $\Lambda_t\setminus\partial\B^3=\partial F_t\setminus\partial\B^3$, i.\,e.~$\Lambda_t$ is the relative boundary of $F_t$ in $\B^3$;
\item $(s\to t)\Rightarrow \hsd^3(F_s\bigtriangleup F_{t})\to0$, where we recall that $F_s\bigtriangleup F_{t}\vcentcolon=(F_{s}\setminus F_{t})\cup(F_{t}\setminus F_{s})$;
\item $\{p_1,\ldots,p_n\}\subset\B^3\setminus F_{t}$. 
\end{itemize} 
In particular, the function $[0,1]\ni t\mapsto \hsd^3(F_t)$ is continuous and there exists $t_*\in\interval{0,1}$ such that $\hsd^3(F_{t_*})=\frac{1}{2}\hsd^3(\B^3)$. 
By the isoperimetric inequality (cf. \cite[Satz~1]{BokowskiSperner1979}, \cite[Theorem~5]{Ros2005}), we obtain $\hsd^2(\Lambda_{t_*})\geq\pi$ and hence $W_\Pi\geq\pi$. 

To prove that the lower bound on the width is strict, we recall the stability of the isoperimetric inequality 
(e.\,g. from \cite[Lemma~3.6]{CarlottoFranzSchulz2022}), 
which implies that there exists $\delta_0>0$ such that if $\hsd^2(\Lambda_{t_*})<\pi+\delta_0$, 
then there exists a half-ball $\Omega=\{p\in\B^3\st p\cdot v\geq0\}$ given by some $v\in\Sp^2$ satisfying $\phi(v)=\pm v$ for all $\phi\in\dih_n$, such that $\hsd^3(F_{t_*}\bigtriangleup\Omega)\leq\pi/6$.  
If $n\geq3$ then necessarily $v=\pm(0,0,1)$. 
In the case $n=2$ we could additionally have $v=\pm(1,0,0)$ or $v=\pm(0,1,0)$. 
In any case, we can find some $\varphi\in\dih_n$ such that $\varphi(v)=-v$ or equivalently, $\varphi(\Omega)=\B^3\setminus\Omega$. 
Recalling that the set $\{p_1,\ldots,p_n\}\subset \B^3\setminus F_{t_*}$ is fixed under the action of $\dih_n$, we necessarily have $\varphi(F_{t_*})=F_{t_*}$. 
Since $\varphi\colon\B^3\to\B^3$ is injective,  
\begin{align*}
\varphi(F_{t_*}\bigtriangleup\Omega)
&=\bigl(\varphi(F_{t_*})\setminus\varphi(\Omega)\bigr)\cup\bigl(\varphi(\Omega)\setminus\varphi(F_{t_*})\bigr)  
\\&
=(F_{t_*}\setminus\Omega^{\complement})\cup(\Omega^{\complement}\setminus F_{t_*}) 
\\
&=(F_{t_*}\cap\Omega)\cup(\Omega^{\complement}\cap F_{t_*}^{\complement})
\\&
=(F_{t_*}\cup\Omega^{\complement})\cap(\Omega\cup F_{t_*}^{\complement})
\\
&=(\Omega\setminus F_{t_*})^{\complement}\cap(F_{t_*}\setminus\Omega)^{\complement}
=(F_{t_*}\bigtriangleup\Omega)^{\complement},
\end{align*}
where $A^{\complement}\vcentcolon=\B^3\setminus A$ for any $A\subset\B^3$. 
Since $\varphi\colon\B^3\to\B^3$ is also an isometry, we conclude  
$\hsd^3\bigl(F_{t_*}\bigtriangleup\Omega\bigr)=\hsd^3\bigl((F_{t_*}\bigtriangleup\Omega)^{\complement}\bigr)$, 
which leads to the contradiction 
\begin{align*}
\tfrac{4}{3}\pi=\hsd^3\bigl(\B^3\bigr)
=\hsd^3\bigl(F_{t_*}\bigtriangleup\Omega\bigr)
+\hsd^3\bigl((F_{t_*}\bigtriangleup\Omega)^{\complement}\bigr)
=2\hsd^3\bigl(F_{t_*}\bigtriangleup\Omega\bigr)\leq\tfrac{1}{3}\pi.
\end{align*}
Consequently, we must have $\hsd^2(\Lambda_{t_*})\geq\pi+\delta_0$ and since $\{\Lambda_t\}_{t\in[0,1]}\in\Pi$ is arbitrary, the claim $W_\Pi\geq\pi+\delta_0$ follows. 

\begin{remark}
The stability of the isoperimetric inequality has also been used in \cite[Proposition~3.7]{CarlottoFranzSchulz2022} and in \cite[Proposition~4.2]{Ketover2016FBMS} to prove a strict width estimate. 
However, in both cases \emph{every} slice $\Lambda_t$ of the sweepouts in question divides $\B^3$ into two sets $F_t$ and $F_t^{\complement}$ of equal volume and -- unlike in our case -- there exists $\varphi\in\dih_n$ such that $\varphi(F_t)=F_t^{\complement}$. 
Therefore, the details in \ref{thm:fbmsg0-Step2} differ from those in the aforementioned references. 
\end{remark}

By Theorem~\ref{thm:PreviousResults}, the lower bound $W_\Pi>0=\max\{\hsd^2(\Sigma_0),\hsd^2(\Sigma_1)\}$ suffices to extract a min-max sequence $\{\Sigma^j\}_{j\in\N}$ consisting of $\dih_n$-equivariant surfaces converging in the sense of varifolds to $m\fbmsgO_n$, where $\fbmsgO_n$ is a compact, connected, embedded, $\dih_n$-equivariant free boundary minimal surface in $\B^3$ and where the multiplicity $m$ is a positive integer. 
(We indeed obtain just one connected component $\fbmsgO_n$ by \cite[Lemma 2.4]{FraserLi2014}, as $\B^3$ is a simply connected with nonnegative Ricci curvature and strictly convex boundary.)
Moreover, $m\hsd^2(\fbmsgO_n)=W_\Pi$.

\ref{thm:fbmsg0-Step3} \emph{Topological control.}
In step \ref{thm:fbmsg0-Step2} we proved the strict inequalities $\pi<m\hsd^2(\fbmsgO_n)<2\pi$. 
Since any free boundary minimal surface in $\B^3$ has at least area $\pi$ by \cite[Theorem~5.4]{FraserSchoen2011}, we directly obtain $m=1$. 
Moreover, the strict lower bound $\pi<\hsd^2(\fbmsgO_n)$ implies that $\fbmsgO_n$ is not isometric to the flat, equatorial disc.
By \cite{Nitsche1985}, the equatorial disc is the only free boundary minimal disc in $\B^3$ up to ambient isometries; hence $\fbmsgO_n$ is not a topological disc. 
Being properly embedded in $\B^3$, the surface $\fbmsgO_n$ is orientable and we have $\genus(\fbmsgO_n)=\gsum(\fbmsgO_n)\leq 0$ by Theorem~\ref{thm:main}, recalling that every surface in our sweepout has genus zero.  
Thus, $\fbmsgO_n$ has genus zero and since it is not a topological disc, $\fbmsgO_n$ must have at least two boundary components.  
By Theorem \ref{thm:mainorientable} we have 
\(
\bsum(\fbmsgO_n)\leq n-1		
\)
where we used that $\gsum(\fbmsgO_n)=0=\gsum(\Sigma^j)$ and $\bsum(\Sigma^j)=\beta_0(\partial\Sigma^j)-1=n-1$ for all $j\in\N$. 
Consequently, $\fbmsgO_n$ (being connected) has at most $n$ boundary components. 
In the case $n=2$ the claim follows. 
Therefore, let us assume $n\geq3$ for the rest of the proof.  
Lemma~\ref{lem:structural} then implies that the number of boundary components for $\fbmsgO_n$ is either $2$ or $n$.

\emph{Ruling out the annulus.}
Towards a contradiction, suppose that $\fbmsgO_n$ has only two boundary components. 
In this case, Lemma \ref{lem:structural} implies that $\fbmsgO_n$ is disjoint from the singular locus $\xi_0\vcentcolon=\B^3\cap\{x=y=0\}$ of the $\Z_n$-action.  
Let $\varepsilon>0$ such that $U_{3\varepsilon}\fbmsgO_n$ is still disjoint from $\xi_0$. 
For each $j\geq J_\varepsilon$, let $\tilde\Sigma^j\subset U_{\varepsilon}\fbmsgO_n$ be the surface obtained from $\Sigma^j$ through surgery as constructed in Theorem \ref{thm:TopologyOfAlmMinSeq}. 
Note that $\{\tilde\Sigma^j\}_{j\geq J_\varepsilon}$ inherits the property of being $\dih_n$-almost minimizing in sufficiently small annuli from $\{\Sigma^j\}_{j\in\N}$, as shown in the beginning of the proof of Theorem~\ref{thm:TopologyOfAlmMinSeq}.

Let $M$ be the smooth ambient manifold which is obtained by regularizing the quotient $\B^3/\Z_n$ in an $\varepsilon$-neighbourhood of the singular locus $\xi_0$. 
The regularization at the poles can be done such that $\partial M$ is strictly mean convex.  
Since the surfaces $\fbmsgO_n$ and $\tilde{\Sigma}^j$ are $\Z_n$-equivariant and disjoint from $U_\varepsilon\xi_0$, 
their quotients $\hat{\fbmsgO}_n=\fbmsgO_n/\Z_n$ and $\hat{\Sigma}^j=\tilde{\Sigma}^j/\Z_n$ are smooth, properly embedded surfaces in $M$.  
We claim that the sequence $\{\hat{\Sigma}^j\}_{j\geq J_\varepsilon}$ converges in the sense of varifolds to $\hat{\fbmsgO}_n$ as $j\to\infty$ and is $\Z_2$-almost minimizing, where $\Z_2$ is the action of the symmetry group $\dih_n$ reduced to the regularization $M$ of the quotient $\B^3/\Z_n$.
Indeed, since $\tilde\Sigma^j$ does not intersect the singular locus of $\Z_n$, for any annulus $\widehat{\mathrm{An}}$ with sufficiently small radius in $M$, there exists an annulus $\mathrm{An}$ in $\B^3$ such that $\tilde\Sigma^j\cap\mathrm{An}$ is isometric to $\hat\Sigma^j\cap\widehat{\mathrm{An}}$. 
The equivariant surgery from $\Sigma^j$ to $\tilde\Sigma^j$ is performed in $U_{2\varepsilon}\fbmsgO_n$, and thus away from the singular locus, so it descends to the quotient.
Since $\Sigma^j/\Z_n$ is a topological disc by construction of the sweepout, Lemma~\ref{lem:orientable_surgery} implies $\bsum(\hat{\Sigma}^j)=0$ for all~$j$.
Theorem~\ref{thm:TopologyOfAlmMinSeq} then yields the contradiction
\begin{align*}
1=\bsum(\hat{Z}_n)\leq\liminf_{j\to\infty}\bsum(\hat{\Sigma}^j)=0. 
\end{align*}
Consequently, $\fbmsgO_n$ has exactly $n$ pairwise isometric boundary components and by Lemma \ref{lem:structural}, the cyclic group $\Z_n$ acts simply transitively on their collection.

\ref{thm:fbmsg0-Step4} \emph{Asymptotic behaviour.}
Let $d(n)$ denote the maximal distance a point in $\fbmsgO_n$ can have from the flat, equatorial disc. 
In fact, \cite[Theorem~15]{White2016} implies that $d(n)$ is attained by a point $q_0\in\partial\fbmsgO_n$. 
The dihedral symmetry of $\fbmsgO_n$ implies that its $n$ pairwise isometric boundary components must intersect the equator of $\B^3$.  
Otherwise, recalling that $\Z_n$ acts simply transitively on their collection, all $n$ boundary components would be contained in the same hemisphere, contradicting the $\rotation_{x\text{-axis}}^{\pi}$-equivariance of their union.  
The area estimate from above and \cite[Theorem~2]{White2016} imply $\hsd^1(\partial \fbmsgO_n)=2\hsd^2(\fbmsgO_n)<4\pi$ for all $2\leq n\in\N$. 
Hence, every single boundary component of $\partial\fbmsgO_n$ has length less than $4\pi/n$.  
In particular, $q_0$ is contained in a curve of length less than $4\pi/n$ intersecting the equatorial disc. 
As a result, we obtain that $d(n)<4\pi/n\to0$ as $n\to\infty$. 
Consequently, any subsequence of $\{\fbmsgO_n\}_n$ converges in the sense of varifolds to the equatorial disc with multiplicity $m\in\N$. 
It remains to prove $m=2$.

Since $\pi<\hsd^2(\fbmsgO_n)<2\pi$ for all $n$, we necessarily have $m\in\{1,2\}$. 
Suppose a subsequence of $\{\fbmsgO_n\}_n$ converges to the equatorial disc with multiplicity one. 
Then for any $\varepsilon>0$ there exists $n_\varepsilon\in\N$ such that $\hsd^2(\fbmsgO_{n_\varepsilon})<\pi+\varepsilon$. 
Choosing $\varepsilon>0$ as given by \cite[Proposition~2.1]{Ketover2016FBMS}, we obtain that $\fbmsgO_{n_\varepsilon}$ is isometric to the equatorial disc, contradicting the fact that $\fbmsgO_{n_\varepsilon}$ has $n_\varepsilon$ boundary components. 
Therefore, any subsequence of $\{\fbmsgO_n\}_n$ converges to the equatorial disc with multiplicity $m=2$. 
\end{proof}

\appendix
\section{Relating topological invariants}
\label{app:homology}
 
Given any topological space $X$, the $k$th Betti number $\beta_k(X)$ is defined as the rank of the $k$th homology group $H_k(X)$ and the Euler characteristic $\chi(X)$ coincides with the alternating sum of the Betti numbers (cf. \cite[Theorem 2.44]{Hatcher2002}). 
In this appendix, we focus on the topological space formed by a compact surface $\Sigma$ and determine how the Euler characteristic $\chi$, the first Betti number $\beta_1$ and the topological complexities $\gsum$ and $\bsum$ defined in Definition \ref{defn:abc} are related.

\begin{proposition} \label{prop:TopCptSurf}
Let $\Sigma$ be a compact, connected surface with genus $g\geq0$ and $b\geq0$ boundary components. 
Then, its first homology group, Betti number and Euler characteristic depend as follows on $g$, $b$ and the orientability. 
\[
\begin{array}{ r | l | l | l  }
\Sigma\quad                       &H_1(\Sigma)        &\beta_1(\Sigma)&\chi(\Sigma)\\ \hline
\text{orientable and closed}      &\Z^{2g}            &2g             &2-2g        \\ 
\text{nonorientable and closed}   &\Z^{g-1}\times\Z_2 &g-1            &2-g         \\  
\text{orientable with boundary}   &\Z^{2g+b-1}        &2g+b-1         &2-2g-b      \\
\text{nonorientable with boundary}&\Z^{g+b-1}         &g+b-1          &2-g-b  
\end{array}
\]
\end{proposition}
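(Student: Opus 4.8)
The plan is to deduce the whole table from the classification of compact surfaces together with elementary algebraic topology, treating the closed case and the case of nonempty boundary separately. Throughout I would work with the standard models: by the classification of compact surfaces (see e.\,g.\ \cite{Hatcher2002}), a compact, connected, \emph{closed} surface is homeomorphic either to the orientable surface obtained from $\Sp^2$ by attaching $g$ handles or to the nonorientable surface obtained from $\Sp^2$ by attaching $g$ crosscaps, and a compact, connected surface with $b\geq1$ boundary components is obtained from one of these closed models by deleting $b$ pairwise disjoint open discs (when $b=0$ one simply reads off the closed rows). A preliminary remark reconciles the genus of Definition~\ref{defn:abc} with this parameter $g$: in each model one can delete $g$ pairwise disjoint simple closed curves (the core curves of the handles in the orientable case, one one-sided curve per crosscap in the nonorientable case) and obtain a connected \emph{planar} surface, and every simple closed curve in a planar surface separates it, so no further curve can be removed; hence the genus in the sense of Definition~\ref{defn:abc} equals $g$.

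Next I would compute $\chi(\Sigma)$ from an explicit CW structure on each model. The closed orientable surface is the quotient of a $4g$-gon under the relation $a_1b_1a_1^{-1}b_1^{-1}\cdots a_gb_ga_g^{-1}b_g^{-1}$, with one vertex, $2g$ edges and one face, so $\chi=2-2g$; the closed nonorientable surface is the quotient of a $2g$-gon under the relation $a_1^2\cdots a_g^2$, with one vertex, $g$ edges and one face, so $\chi=2-g$. Deleting $b$ open discs lowers the Euler characteristic by $b$, which yields $\chi=2-2g-b$ and $\chi=2-g-b$ in the two bounded cases, giving the last column of the table.

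For $H_1$ I would handle the closed case via $\pi_1$ and the bounded case via homotopy type. In the closed case $H_1(\Sigma)$ is the abelianization of the fundamental group read off the polygonal presentation above: in the orientable case the single relation becomes trivial after abelianization, so $H_1(\Sigma)\cong\Z^{2g}$; in the nonorientable case one obtains $\Z^{g}$ modulo the subgroup generated by $2(e_1+\cdots+e_g)$, and the change of basis $f_i=e_i$ for $i<g$, $f_g=e_1+\cdots+e_g$, identifies this group with $\Z^{g-1}\times\Z_2$. When $b\geq1$ the surface deformation retracts onto a connected $1$-dimensional CW complex (collapse the unique $2$-cell across a boundary arc), hence is homotopy equivalent to a wedge of $k$ circles with $k=1-\chi(\Sigma)$, so $H_1(\Sigma)\cong\Z^{k}$ is free of rank $2g+b-1$, respectively $g+b-1$. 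Finally $\beta_1(\Sigma)=\rank H_1(\Sigma)$ gives the third column, and as a consistency check the fourth column also follows from $\chi(\Sigma)=\beta_0(\Sigma)-\beta_1(\Sigma)+\beta_2(\Sigma)$ with $\beta_0(\Sigma)=1$ and $\beta_2(\Sigma)=1$ exactly when $\Sigma$ is closed and orientable (and $\beta_2(\Sigma)=0$ otherwise, since a compact surface that is nonorientable or has boundary carries no $\Z$-fundamental class). I expect the only genuinely delicate points to be the $\Z_2$-torsion computation in the closed nonorientable case and the preliminary remark matching Definition~\ref{defn:abc} with the handle/crosscap count; everything else is bookkeeping.
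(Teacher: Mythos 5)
Your proposal is correct and follows essentially the same route as the paper: both arguments rest on the standard polygonal models (the paper computes $\chi$ via Betti numbers and by gluing discs onto the boundary, you via a direct CW count and deleting discs), obtain the closed-case homology from the polygonal presentation, and handle the bounded case by deformation retraction onto a one-dimensional complex of the appropriate rank. The only genuine addition is your preliminary reconciliation of the curve-removal genus of Definition~\ref{defn:abc} with the handle/crosscap count, which the paper takes for granted; as stated it only exhibits one non-extendable system of $g$ curves, which does not by itself bound the maximum over \emph{all} such systems, so that remark would need the standard Euler-characteristic cutting argument to be complete.
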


\begin{proof} 
We have $\beta_0(\Sigma)=1$ because $\Sigma$ is connected and $\beta_k(\Sigma)=0$ for all $k\geq3$ because $\Sigma$ is two-dimensional. 
Moreover, $\beta_2(\Sigma)=1$ if $\Sigma$ is orientable without boundary and $\beta_2(\Sigma)=0$ otherwise. (Note that if $\Sigma$ has nonempty boundary, then it does not enclose a volume.)  
Hence,  
\begin{align}\label{eqn:chi(Sigma)}
\chi(\Sigma)&=\begin{cases}
                 2-\beta_1(\Sigma) & \text{ if $\Sigma$ is orientable without boundary, }\\
                 1-\beta_1(\Sigma) & \text{ otherwise. }
             \end{cases}
\end{align}						
The computation of $\beta_1(\Sigma)$ and $H_1(\Sigma)$ in the cases where $\Sigma$ is closed can be found e.\,g.~in \cite[Chapter 4 Proposition~5.1]{Massey1977}. 
Given a surface $\Sigma$ with nonempty boundary, let $\hat\Sigma$ be the closed surface which we obtain from $\Sigma$ by closing up each of its $b$ boundary components by gluing in a topological disc.  
Since the Euler characteristic of a surface is the alternating sum of the number of vertices, edges and faces in any suitable triangulation, we have $\chi(\Sigma)=\chi(\hat\Sigma)-b$ and by \eqref{eqn:chi(Sigma)}
\begin{align}
\notag
\beta_1(\Sigma)=1-\chi(\Sigma)=1-\chi(\hat\Sigma)+b
&=\begin{cases}
2g+b-1& \text{ if $\Sigma$ is orientable, }\\
g+b-1& \text{ if $\Sigma$ is nonorientable. }
\end{cases}
\end{align}
This completes the computation of the Euler characteristic and the rank of the first homology group of a surface with boundary. 
The homology group itself can be determined as follows. 
An orientable surface $\Sigma$ with genus $g\in\N$ and $1\leq b\in\N$ boundary components is homeomorphic to a $4g$-gon, where the edges are identified according to the cyclic labeling \[a_1,b_1,a_1^{-1},b_1^{-1},\ldots,a_g,b_g,a_g^{-1},b_g^{-1}\] 
(see \cite[Chapter~4 Section~5.3]{Massey1977}) and where $b$ pairwise disjoint topological discs are removed from the interior.  
After the identification, the curves $a_1,b_1,a_2,b_2,\ldots,a_g,b_g$ are closed and have a point $x_0\in\Sigma$ in common.
Let $c_1,\ldots,c_{b-1}$ be closed curves based at $x_0$ and going around $b-1$ out of the $b$ boundary components. 
Then, the union of the $2g+b-1$ curves $a_1,b_1,\ldots,a_g,b_g,c_1,\ldots,c_{b-1}$ is a deformation retract of $\Sigma$. 
This implies that $H_1(\Sigma)$ is the free $\Z$-module of rank $2g+b-1$, namely $H_1(\Sigma)\cong \Z^{2g+b-1}$.

In the nonorientable case, $\Sigma$ is homeomorphic to a $2g$-gon, where the edges are identified as in 
\cite[Chapter~4 Section~5.4]{Massey1977} and where $b$ pairwise disjoint topological discs are removed. 
The conclusion that $H_1(\Sigma)\cong \Z^{g+b-1}$ then follows similarly as in the orientable case. 
\end{proof}

\begin{corollary}\label{cor:Eulerabc}\label{cor:beta=2g}
Let $\Sigma$ be any compact, possibly disconnected surface and let $c_{\mathcal{O}}$ respectively $c_{\mathcal{N}}$ be the number of its orientable respectively nonorientable connected components.  
Recalling the notation introduced in Definition \ref{defn:abc}, the Euler characteristic of $\Sigma$ is equal to 
\[
\chi(\Sigma)=2c_{\mathcal{O}}+c_{\mathcal{N}}-2\gsum(\Sigma)-\beta_0(\partial\Sigma).
\]
\end{corollary}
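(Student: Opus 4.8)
The plan is to reduce to the connected case via additivity and then invoke Proposition~\ref{prop:TopCptSurf}. First I would observe that all four quantities appearing in the claimed identity are additive with respect to taking disjoint unions of connected components: the Euler characteristic $\chi$ is additive by definition (it is the alternating sum of Betti numbers, each of which is additive over disjoint unions); the counts $c_{\mathcal{O}}$ and $c_{\mathcal{N}}$ are additive by construction; $\beta_0(\partial\Sigma)$ is additive because the boundary of a disjoint union is the disjoint union of the boundaries; and $\gsum$ is additive by Remark~\ref{rem:complexities}. Hence it suffices to verify the identity when $\Sigma$ is connected.

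Next I would split into the orientable and nonorientable cases. Suppose $\Sigma$ is connected and orientable with genus $g\geq 0$ and $b\geq 0$ boundary components. Then $c_{\mathcal{O}}=1$, $c_{\mathcal{N}}=0$, $\beta_0(\partial\Sigma)=b$, and by Definition~\ref{defn:abc} we have $\gsum(\Sigma)=g$. Proposition~\ref{prop:TopCptSurf} gives $\chi(\Sigma)=2-2g-b$ both in the closed case ($b=0$) and the case with boundary, so the right-hand side of the claimed identity equals $2\cdot 1+0-2g-b=2-2g-b=\chi(\Sigma)$, as desired.

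Finally, suppose $\Sigma$ is connected and nonorientable with genus $g\geq 1$ and $b\geq 0$ boundary components. Then $c_{\mathcal{O}}=0$, $c_{\mathcal{N}}=1$, $\beta_0(\partial\Sigma)=b$, and $\gsum(\Sigma)=\tfrac{1}{2}(g-1)$ by Definition~\ref{defn:abc}. Proposition~\ref{prop:TopCptSurf} gives $\chi(\Sigma)=2-g-b$, and the right-hand side of the claimed identity equals $2\cdot 0+1-2\cdot\tfrac{1}{2}(g-1)-b=1-(g-1)-b=2-g-b=\chi(\Sigma)$. Summing over the connected components of a general $\Sigma$ then completes the argument. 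Since every step is a direct substitution into Proposition~\ref{prop:TopCptSurf}, I do not anticipate any genuine obstacle; the only point requiring a moment's care is the bookkeeping that $\gsum$ counts the full genus for orientable pieces but only $\tfrac{1}{2}(\mathrm{genus}-1)$ for nonorientable pieces, which is exactly what produces the uniform coefficient pattern $2c_{\mathcal{O}}+c_{\mathcal{N}}$ in the statement.
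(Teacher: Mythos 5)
Your proof is correct and follows essentially the same route as the paper: apply Proposition~\ref{prop:TopCptSurf} to each connected component, treating the orientable and nonorientable cases separately, and conclude by additivity of all quantities involved. No issues.
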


\begin{proof}
For every orientable connected component $\hat\Sigma$ of $\Sigma$, Proposition \ref{prop:TopCptSurf} implies
\begin{align}\label{eqn:20230412-1}
\chi(\hat\Sigma)
&=2-2\genus(\hat\Sigma)-\beta_0(\partial\hat\Sigma)
=2-2\gsum(\hat\Sigma)-\beta_0(\partial\hat\Sigma), 
\intertext{and for every nonorientable connected component $\hat\Sigma$ of $\Sigma$, we have}
\label{eqn:20230412-2}
\chi(\hat\Sigma)
&=2-\genus(\hat\Sigma)-\beta_0(\partial\hat\Sigma)
=1-2\gsum(\hat\Sigma)-\beta_0(\partial\hat\Sigma).
\end{align}
Since the Euler characteristic $\chi$, the genus complexity $\gsum$ and the number of boundary components are all additive with respect to taking the union of different connected components, the claim follows from \eqref{eqn:20230412-1} and \eqref{eqn:20230412-2} by summation. 
\end{proof}

\begin{corollary}\label{cor:beta1=2g+b+c}
Let $\Sigma$ be any compact, possibly disconnected surface. 
Then its first Betti number $\beta_1(\Sigma)$ coincides with the sum of $2\gsum(\Sigma)+\bsum(\Sigma)$ and the number of its nonorientable connected components with nonempty boundary. 
\end{corollary}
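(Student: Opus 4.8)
The plan is to reduce the statement to a computation on each connected component of $\Sigma$ and then sum, using the additivity of all quantities involved. First I would recall that $\beta_1$, $\gsum$, $\bsum$, and the count of nonorientable components with nonempty boundary are each additive over disjoint unions of connected components (for $\gsum$ and $\bsum$ this is Remark~\ref{rem:complexities}, for $\beta_1$ it is additivity of homology, and for the counting function it is obvious). Therefore it suffices to verify the identity when $\Sigma$ is connected, and then I would split into the four cases of Proposition~\ref{prop:TopCptSurf} according to orientability and whether the boundary is empty.

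The case analysis is then a short bookkeeping check against the table in Proposition~\ref{prop:TopCptSurf}, using that for a connected surface $\hat\Sigma$ with genus $g$ we have $\gsum(\hat\Sigma)=g$ if $\hat\Sigma$ is orientable and $\gsum(\hat\Sigma)=(g-1)/2$ if nonorientable, while $\bsum(\hat\Sigma)=\beta_0(\partial\hat\Sigma)-1=b-1$ if $b\ge 1$ and $\bsum(\hat\Sigma)=0$ if $b=0$. Concretely: for $\hat\Sigma$ orientable and closed, $\beta_1=2g=2\gsum$ and there are no nonorientable components with boundary, so both sides agree; for $\hat\Sigma$ nonorientable and closed, $\beta_1=g-1=2\gsum$ and again the boundary-count term vanishes; for $\hat\Sigma$ orientable with boundary, $\beta_1=2g+b-1=2\gsum+\bsum$ and the boundary-count term is $0$; for $\hat\Sigma$ nonorientable with boundary, $\beta_1=g+b-1=(g-1)+b=2\gsum+\bsum+1$, and the boundary-count term contributes exactly $1$. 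In all four cases the claimed identity $\beta_1(\hat\Sigma)=2\gsum(\hat\Sigma)+\bsum(\hat\Sigma)+\#\{\text{nonorientable components with boundary}\}$ holds for the single component $\hat\Sigma$.

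Summing over the connected components of a general compact $\Sigma$ then gives the result, since each of the four terms is additive. I do not anticipate a genuine obstacle here; the only mild subtlety is making sure the nonorientable-with-boundary case is the unique one where the extra $+1$ appears, which is precisely why that counting term is present in the statement — it compensates for the discrepancy between $\beta_1=g+b-1$ and $2\gsum+\bsum=(g-1)+(b-1)+1\cdot 0$... wait, more carefully $2\gsum+\bsum=(g-1)+(b-1)=g+b-2$, so the missing $+1$ is supplied by counting this component once. I would write the proof by first invoking additivity to reduce to the connected case, then presenting the four-line case check, possibly organized as a short displayed computation or inline, referencing Proposition~\ref{prop:TopCptSurf} and Definition~\ref{defn:abc} for the needed values.

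\begin{proof}
Since the first Betti number, the complexities $\gsum$ and $\bsum$ (cf. Remark~\ref{rem:complexities}), and the number of nonorientable connected components with nonempty boundary are all additive with respect to taking unions of connected components, it suffices to prove the claim when $\Sigma$ is connected. Let $g\geq0$ be the genus of $\Sigma$ and $b\geq0$ the number of its boundary components. If $b\geq1$ then $\bsum(\Sigma)=\beta_0(\partial\Sigma)-1=b-1$ by Definition~\ref{defn:abc}, and if $b=0$ then $\bsum(\Sigma)=0$; in the latter case $\Sigma$ also has no nonorientable component with nonempty boundary. We distinguish the four cases of Proposition~\ref{prop:TopCptSurf}.

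If $\Sigma$ is orientable and closed, then $\gsum(\Sigma)=g$ and $\beta_1(\Sigma)=2g=2\gsum(\Sigma)=2\gsum(\Sigma)+\bsum(\Sigma)$, while the number of nonorientable components with boundary is zero. If $\Sigma$ is nonorientable and closed, then $\gsum(\Sigma)=(g-1)/2$ and $\beta_1(\Sigma)=g-1=2\gsum(\Sigma)=2\gsum(\Sigma)+\bsum(\Sigma)$, and again the counting term vanishes. If $\Sigma$ is orientable with nonempty boundary, then $\gsum(\Sigma)=g$, $\bsum(\Sigma)=b-1$, and $\beta_1(\Sigma)=2g+b-1=2\gsum(\Sigma)+\bsum(\Sigma)$, with no nonorientable component with boundary. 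Finally, if $\Sigma$ is nonorientable with nonempty boundary, then $\gsum(\Sigma)=(g-1)/2$, $\bsum(\Sigma)=b-1$, the number of nonorientable components with nonempty boundary equals $1$, and $\beta_1(\Sigma)=g+b-1=(g-1)+(b-1)+1=2\gsum(\Sigma)+\bsum(\Sigma)+1$. In every case the asserted identity holds for the connected surface $\Sigma$, and the general case follows by summation over connected components.
\end{proof}
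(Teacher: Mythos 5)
Your proof is correct and follows essentially the same route as the paper: a four-case check of connected surfaces against the table in Proposition~\ref{prop:TopCptSurf} combined with Definition~\ref{defn:abc}, followed by additivity over connected components. The only (immaterial) difference is that you reduce to the connected case first, whereas the paper verifies the identity componentwise and then sums.
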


\begin{proof}
Given any connected component $\hat\Sigma$ of $\Sigma$, Proposition \ref{prop:TopCptSurf} and Definition \ref{defn:abc} imply: 
\begin{itemize}[nosep]
\item If $\hat\Sigma$ is orientable and closed then 
$\beta_1(\hat\Sigma)=2\genus(\hat\Sigma)=2\gsum(\hat\Sigma)=2\gsum(\hat\Sigma)+\bsum(\hat\Sigma)$. 
\item If $\hat\Sigma$ is orientable with boundary then 
$\beta_1(\hat\Sigma)=2\genus(\hat\Sigma)+\beta_0(\partial\hat\Sigma)-1=2\gsum(\hat\Sigma)+\bsum(\hat\Sigma)$. 
\item If $\hat\Sigma$ is nonorientable and closed then 
$\beta_1(\hat\Sigma)=\genus(\hat\Sigma)-1=2\gsum(\hat\Sigma)=2\gsum(\hat\Sigma)+\bsum(\hat\Sigma)$. 
\item If $\hat\Sigma$ is nonorientable with boundary then 
$\beta_1(\hat\Sigma)=\genus(\hat\Sigma)+\beta_0(\partial\hat\Sigma)-1=2\gsum(\hat\Sigma)+\bsum(\hat\Sigma)+1$. 
\end{itemize}
As in the proof of Corollary \ref{cor:beta=2g}, the claim follows by additivity with respect to taking the union of different connected components. 
\end{proof}

\begin{corollary}\label{cor:homology2gsum}
Given any compact surface $\Sigma$ with nonempty boundary, let $\iota\colon H_1(\partial \Sigma)\to H_1(\Sigma)$ be the map induced by the inclusion $\partial\Sigma\hookrightarrow\Sigma$. 
Then the quotient $H_1(\Sigma)/\iota(H_1(\partial\Sigma))$ has rank $2\gsum(\Sigma)$.
\end{corollary}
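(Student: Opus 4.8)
\emph{Approach.} The plan is to reduce to the case of a single connected component and then read everything off from Proposition~\ref{prop:TopCptSurf} and the explicit free bases of $H_1$ produced in its proof. Since $H_1(-)$, the genus complexity $\gsum$, and the inclusion-induced map $\iota$ all split as direct sums over connected components, and since rank is additive along short exact sequences of finitely generated abelian groups, it suffices to prove
\[
\rank\bigl(H_1(\hat\Sigma)/\iota(H_1(\partial\hat\Sigma))\bigr)=2\gsum(\hat\Sigma)
\]
for each connected component $\hat\Sigma$; summing then gives the corollary. For a \emph{closed} component this is precisely the rank computation already carried out in Proposition~\ref{prop:TopCptSurf}, so assume $\hat\Sigma$ is connected with $b\geq1$ boundary circles. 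Here $H_1(\partial\hat\Sigma)\cong\Z^b$ is free, so $\rank\iota(H_1(\partial\hat\Sigma))=b-\dim_{\mathbb Q}\ker(\iota\otimes\mathbb Q)$, and hence the quotient has rank $\beta_1(\hat\Sigma)-b+\dim_{\mathbb Q}\ker(\iota\otimes\mathbb Q)$. It therefore remains to compute $\dim_{\mathbb Q}\ker(\iota\otimes\mathbb Q)$ in the two cases.

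\emph{Orientable case.} If $\hat\Sigma$ has genus $g$, the proof of Proposition~\ref{prop:TopCptSurf} supplies a free basis $[a_1],[b_1],\dots,[a_g],[b_g],[c_1],\dots,[c_{b-1}]$ of $H_1(\hat\Sigma)$ in which each $c_k$ encircles the $k$th boundary component, so $\iota[\partial_k\hat\Sigma]=\pm[c_k]$ for $k=1,\dots,b-1$. Abelianizing the surface relation $[a_1,b_1]\cdots[a_g,b_g]\,d_1\cdots d_b=1$ gives $\sum_{k=1}^b\iota[\partial_k\hat\Sigma]=0$. Hence $\iota(H_1(\partial\hat\Sigma))$ is freely generated by $[c_1],\dots,[c_{b-1}]$, so $\dim_{\mathbb Q}\ker(\iota\otimes\mathbb Q)=1$, and the quotient has rank $(2g+b-1)-b+1=2g=2\gsum(\hat\Sigma)$.

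\emph{Nonorientable case.} If $\hat\Sigma$ has genus $g$, Proposition~\ref{prop:TopCptSurf} gives a free basis $[a_1],\dots,[a_g],[c_1],\dots,[c_{b-1}]$ with $\iota[\partial_k\hat\Sigma]=\pm[c_k]$ for $k=1,\dots,b-1$, while abelianizing the relation $a_1^2\cdots a_g^2\,d_1\cdots d_b=1$ yields $\sum_{k=1}^b\iota[\partial_k\hat\Sigma]=-2\sum_{j=1}^g[a_j]$. Thus $\iota(H_1(\partial\hat\Sigma))$ is generated by the $b$ vectors $[c_1],\dots,[c_{b-1}]$ and $2\sum_j[a_j]$, which are $\mathbb Q$-linearly independent; so $\iota\otimes\mathbb Q$ is injective, $\dim_{\mathbb Q}\ker(\iota\otimes\mathbb Q)=0$, and the quotient has rank $(g+b-1)-b=g-1=2\gsum(\hat\Sigma)$. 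Summing over all connected components completes the argument.

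\emph{Expected main difficulty.} Everything above is formal once one correctly identifies the class of the ``last'' boundary circle $[\partial_b\hat\Sigma]$ inside $H_1(\hat\Sigma)$ --- i.e.\ applies the right abelianized surface-group relation, being careful about the factor $2$ attached to the cross-cap generators in the nonorientable case --- and verifies that the remaining boundary classes $[\partial_1\hat\Sigma],\dots,[\partial_{b-1}\hat\Sigma]$ form part of a free basis, so that $\ker(\iota\otimes\mathbb Q)$ has exactly the claimed dimension. An alternative would be to run the long exact sequence of the pair $(\hat\Sigma,\partial\hat\Sigma)$ together with Lefschetz duality, but the bookkeeping for nonorientable $\hat\Sigma$ with $\Z$-coefficients makes the direct computation above preferable.
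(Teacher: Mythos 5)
Your proof is correct, but it takes a genuinely different route from the paper's in the key step. The paper also reduces to a connected component and quotes Proposition~\ref{prop:TopCptSurf} for $\rank H_1(\Sigma)$, but it computes $\rank\iota\bigl(H_1(\partial\Sigma)\bigr)$ from the long exact sequence of the pair $(\Sigma,\partial\Sigma)$: since $H_2(\Sigma)\cong0$ and $H_2(\Sigma,\partial\Sigma)\cong\Z$ or $0$ according to whether $\Sigma$ is orientable or not, exactness forces $\ker\iota$ to have rank $1$ in the orientable case and $0$ in the nonorientable case, which is exactly your $\dim_{\mathbb Q}\ker(\iota\otimes\mathbb Q)$. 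You instead extract the same information by abelianizing the polygon presentation, identifying the last boundary class as $-\sum_{k<b}[c_k]$ (orientable) respectively $-2\sum_j[a_j]-\sum_{k<b}[c_k]$ (nonorientable) and checking linear independence over $\mathbb Q$. The paper's argument is shorter and hides the orientability distinction in the single group $H_2(\Sigma,\partial\Sigma)$; yours is more explicit about which boundary classes become dependent in $H_1(\Sigma)$, at the price of the bookkeeping you flag (the factor $2$ on cross-cap generators, and that independence in the nonorientable case uses $g\geq1$). Your treatment of the reduction to components is also slightly more careful than the paper's, since you note that closed components (where $\iota$ contributes nothing) are already covered by Proposition~\ref{prop:TopCptSurf}, whereas the paper's ``without loss of generality connected'' tacitly assumes $b\geq1$ on each component.
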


\begin{proof}
Without loss of generality we may assume that $\Sigma$ is connected.
Let $g\geq0$ and $b\geq1$ be the genus and the number of boundary components of $\Sigma$ respectively. 
By \cite[Theorem~2.16]{Hatcher2002}, the relative homology groups $H_k(\Sigma,\partial\Sigma)$ fit into the long exact sequence
\begin{equation}\label{eqn:relativehomology}
\begin{tikzcd}[row sep=tiny,column sep=small]
H_2(\Sigma) \arrow[r] &   
H_2(\Sigma,\partial\Sigma) \arrow[r,hook] & 
H_1(\partial\Sigma)\arrow[r,"\displaystyle\iota"] &  
H_1(\Sigma) \arrow[r] &   
H_1(\Sigma,\partial\Sigma) \arrow[r] &
H_0(\partial\Sigma)  \arrow[r] &  
H_0(\Sigma).
\end{tikzcd}
\end{equation}
By assumption, $H_2(\Sigma)\cong0$ and $H_1(\partial\Sigma)\cong\Z^b$. 
If $\Sigma$ is orientable, then $H_2(\Sigma,\partial\Sigma)\cong\Z$. 
Since \eqref{eqn:relativehomology} is exact, 
the kernel of $\iota$ has rank $1$ and the image of $\iota$ has rank $b-1$. 
Recalling $H_1(\Sigma)\cong\Z^{2g+b-1}$ from Proposition~\ref{prop:TopCptSurf}, we obtain 
that $H_1(\Sigma)/\iota(H_1(\partial\Sigma))$ has rank $2g$. 

If $\Sigma$ is nonorientable, then $H_2(\Sigma,\partial\Sigma)\cong0$ and $\iota$ is injective. 
In this case, $H_1(\Sigma)\cong\Z^{g+b-1}$ by Proposition \ref{prop:TopCptSurf}, and we obtain that $H_1(\Sigma)/\iota(H_1(\partial\Sigma))$ has rank $g-1$. 
In either case, the rank of the quotient coincides with $2\gsum(\Sigma)$ by Definition \ref{defn:abc}. 
\end{proof}


\setlength{\parskip}{.9ex plus 1pt minus 2pt}
\bibliography{fbms-bibtex}

\printaddress

\end{document}